\newtheorem{theo}{Theorem}[section]
\newtheorem{cor}[theo]{Corollary}
\newtheorem{lem}[theo]{Lemma}
\newtheorem{prop}[theo]{Proposition}
\theoremstyle{definition}
\newtheorem{defn}[theo]{Definition}
\theoremstyle{remark}
\newtheorem{rmk}[theo]{Remark}
\newtheorem{ex}[theo]{Example}
\numberwithin{equation}{section}
\newcommand{\Q}{\mathcal{Q}_2}
\newcommand{\D}{\mathcal{D}_2}
\newcommand{\F}{\mathcal{F}_2}
\newcommand{\C}{\mathcal{O}_2}
\newcommand{\BB}{\mathcal{B}_2}
\newcommand{\M}{M_{2^\infty}(\mathbb{C})}
\newcommand{\B}{M_{2^\infty}(C(\mathbb{T}))}
\newcommand{\K}{\mathcal{K}} 
\newcommand{\z}{\mathbf{Z}_2}
\journal{}
\def\ps@pprintTitle{%
  \let\@oddhead\@empty
  \let\@evenhead\@empty
  \let\@oddfoot\@empty
  \let\@evenfoot\@empty
}
\begin{document}

\begin{frontmatter}



\title{Maximality and symmetry related to \\ the \(2\)-adic ring \(C^*\)-algebra}

\author[a]{Dolapo Oyetunbi}
 \affiliation[a]{organization={Department of Mathematics and Statistics},
            addressline={ University of Windsor},
             postcode={ ON N9B 3P4},
            country={Canada}\\
          }
         \ead{oyetunb@uwindsor.ca}

\author[a]{Dilian Yang}
\ead{dyang@uwindsor.ca}

\begin{abstract}
The 2-adic ring $C^*$-algebra $\mathcal{Q}_2$ is the universal $C^*$-algebra generated by a unitary and an isometry satisfying certain relations. It contains a canonical copy of the Cuntz algebra $\mathcal{O}_2$. We show that $\mathcal{O}_2$ is a maximal $C^*$-subalgebra of $\mathcal{Q}_2$. Furthermore, we examine the structure of the fixed-point algebra under a periodic \(^*\)-automorphism $\sigma$ of $\mathcal{Q}_2$, which is extended from the flip-flop \(^*\)-automorphism of $\mathcal{O}_2$. We show that the maximality of $\mathcal{O}_2$ in $\mathcal{Q}_2$ extends to the crossed product $\mathcal{O}_2 \rtimes_{\sigma} \mathbb{Z}_2$ in $\mathcal{Q}_2 \rtimes_{\sigma} \mathbb{Z}_2$, and to the fixed-point algebra $\mathcal{O}_2^\sigma$ in $\mathcal{Q}_2^\sigma$. As a consequences of our main results, a few open questions concerning $\mathcal{Q}_2$ are resolved.

\end{abstract}



\begin{keyword}
2-adic ring \(C^*\)-algebra \sep Cuntz algebra \sep maximality \sep symmetry\sep intermediate \(C^*\)-subalgebra \sep  relative Rokhlin dimension



\MSC[2010] 46L05 

\end{keyword}

\end{frontmatter}



\section{Introduction}
Over the years, the structure of intermediate operator algebras arising from inclusions $A \subset B$ has played a significant role in the classification of operator algebras and the study of rigidity phenomena. Notably, such algebras feature prominently in Popa’s deformation/rigidity theory \cite{Popa2006, Popa2007} and in the characterization of \(C^*\)-simplicity via actions on certain boundaries \cite{KK17}. One approach to analyze these inclusions involves Galois-type correspondences \cite{BEFRPR21, Izumi2002, Suzuki2020} and crossed products by partial subactions \cite{KD24}. Tensor product constructions likewise yield a broad class of examples \cite{Ge-Kadison96, Zacharias2001}.

Group actions and their fixed-point algebras often encode rich algebraic and dynamical information. An interesting problem is to understand how the global structure of a \(C^*\)-algebra is preserved or breaks down under its symmetries (\(^*\)-automorphism of order 2). For instance, Blackadar constructed a symmetry of the UHF \(C^*\)-algebra of type \( 2^\infty \) whose fixed-point algebra is not approximately finite-dimensional \cite{Blackadar1990}. Meanwhile, certain structural properties, such as nuclearity, are known to be preserved when passing to fixed-point algebras and crossed products under finite group actions \cite{brown08}.

The 2-adic ring \(C^*\)-algebra $\mathcal{Q}_2$ is the universal \(C^*\)-algebra generated by a unitary $u$ and an isometry $s_2$ satisfying some relations. 
$\mathcal{Q}_2$ can be viewed as a symmetrized version of the Cuntz algebra $\mathcal{O}_2$, containing a canonical copy of $\mathcal{O}_2=C^*(s_1, s_2)$ with $s_1:=us_2$ \cite{LL12}. Unlike $\mathcal{O}_2$, the unitary $u$ intertwines the Cuntz isometries $s_1$ and $s_2$ in a way that precludes the existence of a conditional expectation from $\mathcal{Q}_2$ onto $\mathcal{O}_2$ \cite{ACR18}. This intertwining imposes a rigidity that obstructs many structural phenomena possible in $\mathcal{O}_2$ from occurring in $\mathcal{Q}_2$. The algebra $\mathcal{Q}_2$ can be realized as the $C^*$-algebra of a self-similar graph \cite{ES16,VY25}, as a partial crossed product \cite{ES16}, as a boundary quotient $C^*$-algebra of a semigroup \cite{BOS18, LY21}, and as a $C^*$-algebra associated to a group endomorphism of a discrete group \cite{Hirshberg2002}. These various realizations provide invaluable tools to analyze $\mathcal{Q}_2$, making it a natural test case for exploring structural properties that may extend to broader classes of $C^*$-algebras. The $C^*$-algebra $\mathcal{Q}_2$ has been extensively studied in the literature; see, for instance, \cite{ACR18b, ACR18, ACR20, ACR21, LL12} and the references therein.

Very recently, Bassi and Conti showed that the inclusion $\mathcal{O}_2 \subset \mathcal{Q}_2$ is rigid \cite{BC2025}. However, the natural question of whether there exists a proper intermediate $C^*$-subalgebra between $\mathcal{O}_2$ and $\mathcal{Q}_2$ remains open. At first glance, the question may appear straightforward, and a natural candidate is the \(C^*\)-algebra $C^*(\mathcal{O}_2, u^n)$ (see Definition \ref{defn:2-adic}) for some $n\in \mathbb{N}$. However, the intertwining relations imposed on the Cuntz isometries imply that $C^*(\mathcal{O}_2, u^n) = \mathcal{Q}_2$ for all $n\in \mathbb{Z}$ (see \cite[Remark 9.2]{ACR21}). We resolve the question by establishing the following:

\begin{theo}[Theorem \ref{thm:maximality_of_cuntz}]
\label{T:9.1}
$\mathcal{O}_2$ is a maximal $C^*$-subalgebra of $\mathcal{Q}_2$.
\end{theo}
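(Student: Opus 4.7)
The plan is to suppose for contradiction that there exists a $C^*$-subalgebra $B$ with $\mathcal{O}_2 \subsetneq B \subsetneq \mathcal{Q}_2$, fix any $x \in B \setminus \mathcal{O}_2$, and produce the unitary $u$ inside $B$ from $\mathcal{O}_2$ and $x$ via $C^*$-operations; since $\mathcal{Q}_2 = C^*(\mathcal{O}_2, u)$, this forces $B = \mathcal{Q}_2$. The two structural inputs I would rely on are the gauge action $\gamma \colon \mathbb{T} \to \operatorname{Aut}(\mathcal{Q}_2)$ defined by $\gamma_z(s_i) = z s_i$ and $\gamma_z(u) = u$, which restricts to the usual gauge action on $\mathcal{O}_2$ with UHF core $\mathcal{F}_2 = \mathcal{O}_2^\gamma$; and the intertwining $s_2 u = u^2 s_2$, whose iterates yield $s_\mu u^k = u^{2^{|\mu|}k} s_\mu$ for every Cuntz word $s_\mu$, and hence the telescoping identity $s_\mu^*\, u^{2^{|\mu|}k}\, s_\mu = u^k$.

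First I would pass to the gauge-saturation $\widetilde B := C^*\!\bigl(\bigcup_{z \in \mathbb{T}} \gamma_z(B)\bigr)$, which is still intermediate between $\mathcal{O}_2$ and $\mathcal{Q}_2$ and genuinely $\gamma$-invariant, and apply the averaging conditional expectation $E(y) = \int_{\mathbb{T}} \gamma_z(y)\,dz$ onto $\widetilde B \cap \mathcal{Q}_2^\gamma$. A careful analysis of the Fourier components of $x$ with respect to $\gamma$, exploiting that $\mathcal{O}_2$ is itself $\gamma$-invariant with gauge-fixed part $\mathcal{F}_2$, should yield an element of $\widetilde B \cap \mathcal{Q}_2^\gamma$ that lies outside $\mathcal{F}_2$; multiplying by a Cuntz monomial $s_\mu \in \mathcal{O}_2$ of sufficient length and invoking the telescoping identity converts this into an element from which $u$ can be read off. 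This reduces the global maximality claim to the auxiliary statement that $\mathcal{F}_2$ is maximal inside $\mathcal{Q}_2^\gamma = C^*(\mathcal{F}_2, u)$, which is much more tractable thanks to the explicit description $\mathcal{Q}_2^\gamma = \overline{\operatorname{span}}\{u^m a : m \in \mathbb{Z},\, a \in \mathcal{F}_2\}$ obtained from $s_\mu u^k s_\nu^* = u^{2^{|\mu|}k} s_\mu s_\nu^*$ when $|\mu| = |\nu|$. Once isolated, a single non-trivial power $u^k$ in $B$ can be passed through the Cuntz monomials to recover $u$ itself.

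The main obstacle has two layers. First, because there is no conditional expectation $\mathcal{Q}_2 \to \mathcal{O}_2$, the bare hypothesis $x \notin \mathcal{O}_2$ does not on its own guarantee that any homogeneous Fourier piece of $x$ escapes the corresponding spectral component of $\mathcal{O}_2$; one has to argue separately that the gauge-saturation $\widetilde B$ genuinely carries non-$\mathcal{O}_2$ content, which requires a bare-hands estimate on how $B \setminus \mathcal{O}_2$ interacts with the $\mathbb{T}$-orbits. Second, and more seriously, the unitary $u$ fails to normalize $\mathcal{F}_2$ — for example $u(s_1 s_2^*)u^* = u^2 s_2 s_1^* = u \notin \mathcal{F}_2$ — so $\mathcal{Q}_2^\gamma$ is not literally a crossed product of $\mathcal{F}_2$ by $\mathbb{Z}$, and the classical Galois-type correspondences for intermediate subalgebras of crossed products are not directly applicable. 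The auxiliary maximality of $\mathcal{F}_2$ inside $\mathcal{Q}_2^\gamma$ must therefore be proven directly from the explicit algebraic identities of $\mathcal{Q}_2$, and I expect this crossed-product-free analysis of $\mathcal{Q}_2^\gamma$ to be the most delicate technical step of the whole proof.
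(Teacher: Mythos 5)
Your proposal correctly isolates the right auxiliary statement: the maximality of the gauge-fixed core $\mathcal{F}_2$ inside $\mathcal{Q}_2^\gamma$, which in the paper's notation is exactly Theorem \ref{thm:maximality_of_car} ($\M$ maximal in $\B$, since $\mathcal{Q}_2^\gamma=\BB$ is the Bunce--Deddens algebra and $\mathcal{F}_2$ its UHF core). However, the reduction of the general case to this auxiliary statement has a genuine gap. Once you replace $B$ by its gauge-saturation $\widetilde B$, every conclusion you draw --- that some Fourier component escapes $\mathcal{F}_2$, that $u\in\widetilde B$, that $\widetilde B=\mathcal{Q}_2$ --- is a statement about $\widetilde B$, not about $B$. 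Since the Fourier components $E_k(x)=\int_{\mathbb{T}}z^{-k}\gamma_z(x)\,dz$ of an element of $B$ need not lie in $B$ (there is no reason $B$ is $\gamma$-invariant, and, as you note, there is not even a conditional expectation $\mathcal{Q}_2\to\mathcal{O}_2$ to fall back on), your argument at best shows that there is no proper \emph{gauge-invariant} intermediate subalgebra. That is strictly weaker than maximality, and closing this gap is precisely where the real work lies. The paper closes it by realizing $\mathcal{O}_2\subset\mathcal{Q}_2$ as full corners of $\overset{\longrightarrow}{\mathcal{F}_2}\rtimes_{\overline\varphi}\mathbb{Z}\subset\overset{\longrightarrow}{\mathcal{B}_2}\rtimes_{\overline\varphi}\mathbb{Z}$ (Proposition \ref{prop:cuntz_inclusion}, with $\overline\varphi$ induced by $a\mapsto s_2as_2^*$), verifying that $\overline\varphi$ has finite relative Rokhlin dimension for the inclusion of the cores, and invoking Theorem \ref{thm:pointwise_finite_inclusion_criteria} to conclude that \emph{every} intermediate algebra of the crossed products is itself a crossed product over an intermediate algebra of $\overset{\longrightarrow}{\mathcal{F}_2}\subset\overset{\longrightarrow}{\mathcal{B}_2}$; the Rokhlin towers are exactly the substitute for the averaging you cannot perform inside an arbitrary $B$.

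Two smaller points. First, your expectation that the auxiliary maximality of $\mathcal{F}_2$ in $\mathcal{Q}_2^\gamma$ requires a ``crossed-product-free'' analysis is not how the paper proceeds: although $u$ does not normalize $\mathcal{F}_2$, the algebra $\BB$ \emph{is} a genuine crossed product $C(\mathbf{Z}_2)\rtimes_\alpha\mathbb{Z}$ over the Cartan subalgebra $\mathcal{D}_2$, and $\mathcal{F}_2$ is the partial crossed product $C(\mathbf{Z}_2)\rtimes_\beta\mathbb{Z}$ for a partial subaction $\beta$ of $\alpha$; freeness of $\alpha$ and the Galois correspondence of \cite{BEFRPR21,KD24} reduce the question to an elementary combinatorial check that no partial subaction lies properly between $\beta$ and $\alpha$. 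Second, your sample computation $u(s_1s_2^*)u^*=u$ is incorrect (one gets $u^2s_2s_2^*u^{-1}$, which is not $u$ since $s_2s_2^*$ is a proper projection), though the qualitative point that $u$ fails to normalize $\mathcal{F}_2$ is right and is indeed the reason the naive crossed-product picture over $\mathcal{F}_2$ is unavailable.
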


The flip-flop $^*$-automorphism $\sigma: \mathcal{O}_2 \to \mathcal{O}_2$ is a symmetry of the Cuntz algebra that exchanges its two canonical generating isometries $s_1$ and $s_2$. Choi and Latr\'emoli\`ere gave a concrete description of the fixed-point algebra under this \(^*\)-automorphism and showed that it is well-behaved: both the associated fixed-point algebra and crossed product are isomorphic to $\mathcal{O}_2$ \cite[Theorem 1.4]{CL12}. The flip-flop \(^*\)-automorphism $\sigma$ extends naturally to a \(^*\)-automorphism of $\mathcal{Q}_2$ which sends \(u\) to \(u^*\). As one of our main results, we give an explicit description of the corresponding fixed-point algebra and extend the result of Choi and Latr\'emoli\`ere \cite{CL12} (see also \cite[Question 9.13]{ACR21}).

\begin{theo}[Theorem \ref{thm:fixed_sym} and Theorem \ref{T:3iso}]
\label{T:9.13}
Let \(\sigma\) be the \(^*\)-automorphism of \(\mathcal{Q}_2\) extending the flip-flop \(^*\)-automorphism of \(\mathcal{O}_2\),
\[
T \coloneqq \frac{1}{\sqrt{2}}(s_1 + s_2), \quad
V \coloneqq \frac{1}{\sqrt{2}}(s_1 - s_2)(s_1 s_1^* - s_2 s_2^*), \quad \text{and} \quad
R \coloneqq u + u^*.
\]
Then the fixed-point algebra of \(\sigma\) is
\[
\mathcal{Q}_2^\sigma = C^*(T, V, R) = C^*\big(x u^h + \sigma(x) u^{-h} \mid x \in \mathcal{O}_2,\, h \in \mathbb{Z} \big).
\]
Moreover, \(\mathcal{Q}_2 \rtimes_{\sigma} \mathbb{Z}_2 \cong \mathcal{Q}_2^\sigma \cong \mathcal{Q}_2\).
\end{theo}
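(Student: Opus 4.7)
The plan splits the statement into three steps: first, establishing the abstract description of $\mathcal{Q}_2^\sigma$ as the $C^*$-algebra generated by $\{xu^h+\sigma(x)u^{-h} : x\in\mathcal{O}_2,\, h\in\mathbb{Z}\}$; second, upgrading this to the concrete generating triple $\{T,V,R\}$; and third, identifying both $\mathcal{Q}_2\rtimes_\sigma\mathbb{Z}_2$ and $\mathcal{Q}_2^\sigma$ with $\mathcal{Q}_2$. For the first step I would use the standard dense linear span of $\mathcal{Q}_2$ by monomials of the form $xu^h$ with $x\in\mathcal{O}_2$ and $h\in\mathbb{Z}$ (a consequence of the Larsen-Li picture of $\mathcal{Q}_2$ as a semigroup crossed product), combined with the $\sigma$-invariant conditional expectation $E(y):=\tfrac{1}{2}(y+\sigma(y))$ from $\mathcal{Q}_2$ onto $\mathcal{Q}_2^\sigma$. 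Every fixed element is then a norm-limit of finite sums $\sum_h E(x_h u^h)=\tfrac{1}{2}\sum_h(x_h u^h + \sigma(x_h)u^{-h})$, giving one inclusion; the other is immediate by $\sigma$-invariance of the generators.

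For the identification $C^*(T,V,R)=\mathcal{Q}_2^\sigma$, the key input is Choi--Latr\'emoli\`ere's result $\mathcal{O}_2^\sigma=C^*(T,V)$ \cite{CL12}. Decomposing $x=x_++x_-$ with $x_{\pm}:=\tfrac{1}{2}(x\pm\sigma(x))$, one rewrites
\[
xu^h + \sigma(x)u^{-h} = x_+(u^h+u^{-h}) + x_-(u^h-u^{-h}).
\]
The first summand sits inside $\mathcal{O}_2^\sigma\cdot C^*(R)$ because $u^h+u^{-h}$ is a Chebyshev polynomial in $R=u+u^*$. The second is more subtle because $u^h-u^{-h}\notin C^*(R)$; I would handle it by induction on $|h|$, exploiting the fact that $V=\tfrac{1}{\sqrt{2}}(s_1-s_2)(s_1s_1^*-s_2s_2^*)$ already supplies an anti-fixed factor inside $C^*(T,V)$, and the 2-adic relation $s_2u=u^2s_2$ to convert anti-fixed expressions in $u$ into polynomials in $T$, $V$, $R$ via repeated multiplication by $R$ and $V$. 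I expect this algebraic reduction to be the main obstacle of the proof.

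For the two isomorphisms, I would apply the Kirchberg--Phillips classification theorem. First, one checks that $\sigma$ is outer (using simplicity of $\mathcal{Q}_2$ and, for instance, a non-triviality argument at the K-theory or tracial level), whence both $\mathcal{Q}_2\rtimes_\sigma\mathbb{Z}_2$ and $\mathcal{Q}_2^\sigma$ are unital, separable, simple, purely infinite, nuclear, and in the UCT class. A Pimsner--Voiculescu-type six-term exact sequence for the crossed product, combined with the Rokhlin-dimension techniques advertised in the paper's keywords for the fixed-point algebra, then computes the $K$-groups and matches them with $K_*(\mathcal{Q}_2)$ with the unit class preserved. Kirchberg--Phillips yields the isomorphism $\mathcal{Q}_2^\sigma\cong\mathcal{Q}_2$, and the intermediate isomorphism $\mathcal{Q}_2\rtimes_\sigma\mathbb{Z}_2\cong\mathcal{Q}_2^\sigma$ then follows either from $M_2$-absorption of $\mathcal{Q}_2$ combined with the Morita equivalence $\mathcal{Q}_2\rtimes_\sigma\mathbb{Z}_2\sim_{\mathrm{M}}\mathcal{Q}_2^\sigma$, or directly from another application of the classification theorem.
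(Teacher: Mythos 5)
Your overall architecture matches the paper's: the description of $\mathcal{Q}_2^\sigma$ via the averaging map $y\mapsto\tfrac12(y+\sigma(y))$ applied to the dense span of monomials $xu^h$, the reduction of the generating problem to Choi--Latr\'emoli\`ere's $\mathcal{O}_2^\sigma=C^*(T,V)$, and the endgame via Morita equivalence of $\mathcal{Q}_2^\sigma$ with the simple crossed product followed by Kirchberg--Phillips. However, there is a genuine gap exactly where you flag ``the main obstacle'': you never actually prove that the anti-fixed piece $x_-(u^h-u^{-h})$ lies in $C^*(T,V,R)$. Your Chebyshev observation does correctly reduce this to $h=1$, since $u^h-u^{-h}=(u-u^{-1})\,q_h(R)$ for a polynomial $q_h$, so everything collapses to showing $xu+\sigma(xu)\in C^*(T,V,R)$ for $x\in\mathcal{O}_2$. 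The paper's key idea at this point --- which your sketch of ``repeated multiplication by $R$ and $V$'' does not supply --- is to work with the spanning monomials $x=u^{k_1}s_{2^{n_1}}s_{2^{n_2}}^*u^{-k_2}$ of $\mathcal{O}_2$ and observe that for each such $x$ at least one of $xu$, $xu^*$ is again a monomial in $\mathcal{O}_2$ (according to whether $k_2\ge 1$ or $k_2=0$); the identity $(x+\sigma(x))(u+u^*)=(xu+\sigma(xu))+(xu^*+\sigma(xu^*))$ then lets you trade the one that leaves $\mathcal{O}_2$ for the one that stays, placing $xu+\sigma(xu)$ inside $C^*(T,V,R)$. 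The paper then runs, for general $h$, the same three-term recursion you encode via Chebyshev polynomials. Without this monomial-level observation or a substitute, the inclusion $\mathcal{Q}_2^\sigma\subset C^*(T,V,R)$ is not established; note in particular that $V$ lives in $\mathcal{O}_2$ and cannot by itself produce the anti-fixed factor $u-u^{-1}$.

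A secondary issue: for the $K$-theory you invoke ``a Pimsner--Voiculescu-type six-term exact sequence for the crossed product,'' but $\mathcal{Q}_2\rtimes_\sigma\mathbb{Z}_2$ is a crossed product by $\mathbb{Z}_2$, to which Pimsner--Voiculescu does not directly apply. The paper first realizes $\mathcal{Q}_2\rtimes_\sigma\mathbb{Z}_2$ as the universal algebra $\mathcal{E}_\pi$ for an endomorphism $\pi$ of $\mathbb{Z}\rtimes\mathbb{Z}_2$ and hence as a full corner of $\overset{\longrightarrow}{\mathcal{F}}\rtimes_{\overline{\varphi}}\mathbb{Z}$, computes $K_*(\mathcal{F})$ as an explicit inductive limit ($K_0(\mathcal{F})\cong\mathbb{Z}[\tfrac12]\oplus\mathbb{Z}$, $K_1(\mathcal{F})=0$), and only then applies Pimsner--Voiculescu to the $\mathbb{Z}$-action. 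Your classification argument is otherwise in line with the paper's, but this intermediate realization is nontrivial work that your proposal leaves unaddressed.
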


We further demonstrate that the maximality of the Cuntz algebra $\C$ inside $\Q$ extends to the setting of the crossed product and fixed-point algebras associated with the symmetry $\sigma$. Specifically, we establish the following:

\begin{theo}[Theorem \ref{thm:maximality_car_sym} and Corollary \ref{cor:maximality_cuntz_fixed}]
 $\C \rtimes_{\sigma} \mathbb{Z}_2$ is a maximal $C^*$-subalgebra of $\Q \rtimes_{\sigma} \mathbb{Z}_2$; and $\mathcal{O}_2^\sigma$ is a maximal $C^*$-subalgebra of $\Q^\sigma$.
\end{theo}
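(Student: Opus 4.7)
I prove the crossed-product statement first and derive the fixed-point statement as a corollary via an $M_2$-corner identification. Let $A$ be any $C^*$-algebra with $\C \rtimes_\sigma \mathbb{Z}_2 \subseteq A \subseteq \Q \rtimes_\sigma \mathbb{Z}_2$. Since $\lambda \in A$ and $\operatorname{Ad}\lambda|_{\Q} = \sigma$, the intersection $A \cap \Q$ is a $\sigma$-invariant intermediate $C^*$-subalgebra of $\C \subseteq \Q$. By Theorem \ref{thm:maximality_of_cuntz}, $A \cap \Q \in \{\C, \Q\}$; in the easy case $A \cap \Q = \Q$ one has $A \supseteq C^*(\Q, \lambda) = \Q \rtimes_\sigma \mathbb{Z}_2$, as required.

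The substantive case is $A \cap \Q = \C$, where the goal is $A = \C \rtimes_\sigma \mathbb{Z}_2$. My strategy is to show that $A$ is invariant under the dual $\widehat{\mathbb{Z}_2}$-action $\widehat{\sigma}$ on $\Q \rtimes_\sigma \mathbb{Z}_2$ (which fixes $\Q$ and sends $\lambda \mapsto -\lambda$). Granted this, the canonical conditional expectation $E = (\operatorname{id} + \widehat{\sigma})/2 \colon \Q \rtimes_\sigma \mathbb{Z}_2 \to \Q$ restricts to a map $A \to A \cap \Q = \C$, so any $x = a + b\lambda \in A$ satisfies $a = E(x) \in \C$, whence $b\lambda = x - a \in A$ forces $b \in A \cap \Q = \C$, giving $x \in \C \rtimes_\sigma \mathbb{Z}_2$. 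To establish $\widehat{\sigma}$-invariance I exploit the projections $p = (1+\lambda)/2$ and $q = (1-\lambda)/2$ lying in $\C \rtimes_\sigma \mathbb{Z}_2 \subseteq A$, decompose $A = pAp + pAq + qAp + qAq$, and analyze each block via the corner identification $p(\Q \rtimes_\sigma \mathbb{Z}_2)p \cong \Q^\sigma$ and its $\C$-analogue, together with the simplicity of $\C \rtimes_\sigma \mathbb{Z}_2 \cong \C$ and the triviality of the relative commutant of $\Q$ in $\Q \rtimes_\sigma \mathbb{Z}_2$.

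For the corollary, the fixed-point maximality follows by a Murray--von Neumann equivalence argument. Since $\C \rtimes_\sigma \mathbb{Z}_2 \cong \C$ by Choi--Latr\'emoli\`ere \cite{CL12} and $K_0(\C) = 0$, there is a partial isometry $v \in \C \rtimes_\sigma \mathbb{Z}_2$ with $v^* v = p$ and $vv^* = q$. Together with $p$ and $q$, this produces matrix units inside $\C \rtimes_\sigma \mathbb{Z}_2$, yielding isomorphisms $\C \rtimes_\sigma \mathbb{Z}_2 \cong M_2(\C^\sigma)$ and $\Q \rtimes_\sigma \mathbb{Z}_2 \cong M_2(\Q^\sigma)$. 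Any intermediate $C^*$-subalgebra of $M_2(\C^\sigma) \subseteq M_2(\Q^\sigma)$ contains these matrix units and hence has the form $M_2(D)$ for a unique intermediate $\C^\sigma \subseteq D \subseteq \Q^\sigma$; the crossed-product maximality then forces the maximality of $\C^\sigma$ in $\Q^\sigma$.

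The hard part will be the $\widehat{\sigma}$-invariance of $A$ in the case $A \cap \Q = \C$ — equivalently, identifying the off-diagonal blocks $pAq$ and $qAp$. This step needs more than the maximality of $\C$ in $\Q$: it calls on structural features of the inclusion $\C \subseteq \Q$, in particular its rigidity established in \cite{BC2025} and the outerness of $\sigma$, which together sharply restrict the $\C$-sub-bimodules of $\Q$ that can appear at the off-diagonal level.
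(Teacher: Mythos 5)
Your reduction is sound as far as it goes: for an intermediate algebra $\C \rtimes_{\sigma} \mathbb{Z}_2 \subset A \subset \Q \rtimes_{\sigma} \mathbb{Z}_2$, the intersection $A \cap \Q$ is indeed an intermediate algebra of $\C \subset \Q$, Theorem \ref{thm:maximality_of_cuntz} gives the dichotomy, and \emph{if} $A$ were invariant under the dual action $\widehat{\sigma}$ then the Fourier-coefficient argument you describe would finish the case $A \cap \Q = \C$. Likewise, your derivation of the fixed-point statement from the crossed-product statement is correct and is essentially the paper's own argument: the paper uses the full projection $e = \tfrac{1}{2}(1+\lambda_{(0,1)})$ and Lemma \ref{lem:inclusion_correspondence} to transfer maximality to the corners $e(\C \rtimes_{\sigma}\mathbb{Z}_2)e \cong \C^{\sigma}$ and $e(\Q \rtimes_{\sigma}\mathbb{Z}_2)e \cong \Q^{\sigma}$; your matrix-unit variant (using $K_0(\mathcal{O}_2)=0$ to make $p\sim q$ inside $\C\rtimes_\sigma\mathbb{Z}_2$) accomplishes the same thing.

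The genuine gap is the step you yourself flag as ``the hard part'': showing that $A$ is $\widehat{\sigma}$-invariant, i.e.\ that the canonical expectation $E:\Q\rtimes_\sigma\mathbb{Z}_2\to\Q$ maps $A$ into $A$. This is not a technical loose end --- it \emph{is} the theorem. In general an intermediate algebra of $A_0\rtimes G\subset C\subset B_0\rtimes G$ need not respect the grading, and ruling this out is exactly what requires a Rokhlin-type hypothesis; maximality of $\C$ in $\Q$, simplicity, outerness of $\sigma$, and the rigidity of $\C\subset\Q$ from \cite{BC2025} do not by themselves control the off-diagonal corners $pAq$ and $qAp$, and you give no argument that they do. The paper avoids the dual action entirely: it realizes $\C\rtimes_\sigma\mathbb{Z}_2\subset\Q\rtimes_\sigma\mathbb{Z}_2$ as corners $p(\overset{\longrightarrow}{\mathcal{H}}\rtimes_{\overline{\varphi}}\mathbb{Z})p\subset p(\overset{\longrightarrow}{\mathcal{F}}\rtimes_{\overline{\varphi}}\mathbb{Z})p$ (Proposition \ref{prop:cuntz_inclusion_sym}), proves $\mathcal{H}$ is maximal in $\mathcal{F}$ by identifying that inclusion with $\M\rtimes_{\overline{\sigma}}\mathbb{Z}_2\subset\B\rtimes_{\overline{\sigma}}\mathbb{Z}_2$ and analyzing partial subactions of a free $\mathbb{Z}\rtimes\mathbb{Z}_2$-action (Theorems \ref{thm:max_bunce_sym} and \ref{thm:bunce_sym}), establishes the Rokhlin property of $\varphi|_{\mathcal{H}}$ and hence pointwise finite \emph{relative} Rokhlin dimension of $\overline{\varphi}$ for $\overset{\longrightarrow}{\mathcal{H}}\subset\overset{\longrightarrow}{\mathcal{F}}$ (Proposition \ref{prop:rokhlin_property_car_sym}, Theorem \ref{thm:bunce_sym_roklin}), and only then invokes \cite[Theorem 1.1]{AHS23} to exhaust the intermediate algebras of the $\mathbb{Z}$-crossed products. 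If you want to salvage your route, you would need to prove that the $\mathbb{Z}_2$-action $\sigma$ has finite relative Rokhlin dimension for the inclusion $\C\subset\Q$ (towers in $\C$ that are approximately central in $\Q$) and then apply \cite{AHS23} to the $\mathbb{Z}_2$-action directly --- a statement that is plausible but is nowhere established in your proposal.
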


A key ingredient for establishing the maximality results above is the following:
\begin{theo}[Theorem \ref{thm:maximality_of_car}]
\(\M\) is a maximal \( C^* \)-subalgebra of \( \B \).
\end{theo}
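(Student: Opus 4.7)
My approach is to identify $\B$ with the core of $\Q$ (the fixed-point algebra under the canonical gauge $\mathbb{T}$-action) and $\M$ with the CAR subalgebra $\F$, so that $\B = C^*(\M, u)$ where $u$ is the fundamental $2$-adic unitary satisfying $s_2 u = u^2 s_2$. Given an intermediate $C^*$-subalgebra $\M \subseteq D \subseteq \B$ with $D \neq \M$, I will show $D = \B$ by producing $u \in D$.

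The first step is to extract from a chosen $x \in D \setminus \M$ an element of $D$ that genuinely involves a nontrivial power of $u$. Using the UHF filtration of $\F$, one approximates $x$ by a finite combination of monomials $s_\alpha s_\beta^* u^k$ (with $|\alpha|=|\beta|$, $k \in \mathbb{Z}$), and by compressing with minimal projections of $\F \subseteq \M$ one isolates inside $D$ a non-zero element of the form $p u^{k_0} q$ for projections $p, q \in \M$ and some $k_0 \neq 0$, up to a controllable error.

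Once $p u^{k_0} q \in D$ is obtained, the plan is to drive $|k_0|$ down to $1$. The identity $s_2^* u^{2m} s_2 = u^m$, an immediate consequence of $s_2 u = u^2 s_2$, provides an exponent-halving on the corner cut by $p_2 := s_2 s_2^* \in \M$. Although $s_2 \notin \B$, the compression $s_2^*(\cdot) s_2$ on this corner can be implemented intrinsically inside $\B$ using suitable partial isometries from $\F \subseteq \M \subseteq D$. Iterating this halving on $k_0 = 2^t r$ (with $r$ odd) produces $q_1 u^r \in D$, and a parallel application starting from a distinct component (for example $x u^{-k_0}$) yields $q_2 u^{r'} \in D$ with $\gcd(r, r') = 1$; a Bezout combination then furnishes $u \in D$, so $D = C^*(\M, u) = \B$.

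The main obstacle is making the exponent-halving intrinsic to $\B$: since $s_2$ and $s_2^*$ lie outside the core, the compression $s_2^*(\cdot) s_2$ must be realized as a concrete combination of left and right multiplications by elements of $\M \subseteq D$ that takes $D$ to itself. This reflects the well-known absence of a conditional expectation $\Q \to \C$, and is what makes $\M$ maximal in $\B$ despite the fact that the standard tensor embedding $M_{2^\infty}(\mathbb{C}) \otimes 1 \hookrightarrow M_{2^\infty}(\mathbb{C}) \otimes C(\mathbb{T})$ admits many proper intermediate subalgebras of the form $M_{2^\infty}(\mathbb{C}) \otimes C$ for $\mathbb{C} \subsetneq C \subsetneq C(\mathbb{T})$.
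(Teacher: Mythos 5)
Your strategy is genuinely different from the paper's (which realizes $\M\subset\B$ as partial crossed products $C(\z)\rtimes_\beta\mathbb{Z}\subset C(\z)\rtimes_\alpha\mathbb{Z}$ for the free translation action $\alpha$ on the $2$-adic integers, invokes the Galois correspondence of \cite{BEFRPR21, KD24} to know every intermediate algebra is itself a partial crossed product, and then runs a short combinatorial argument on the domains $V_m$ of the partial subaction), but as written it has two gaps, the second of which I do not believe can be repaired. First, the extraction step: compressing $x\in D\setminus\M$ by minimal projections of $\F$ isolates a single monomial of a \emph{finite approximant} of $x$, but applied to $x$ itself it only produces elements of $D$ that are \emph{close} to some $p u^{k_0} q$; it does not put an exact nonzero Fourier component $f\delta_{k_0}$ into $D$. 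Upgrading ``close to'' to ``contains'' is exactly where freeness of the action enters, via an averaging argument over the Cartan subalgebra $C(\z)\subset\M$ -- this is the content of the cited correspondence theorems, and it is the substantial part of the proof, not a controllable error term.

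Second, and fatally, the exponent-halving. The map $a\mapsto s_2^*as_2$ does carry $\B$ into $\B$, but it is conjugation by an isometry lying outside $\B$, and it cannot be realized as $a\mapsto\sum_i x_i a y_i$ with $x_i,y_i\in\M$: if it could, every intermediate algebra $D\supseteq\M$ would automatically be invariant under it, and (working in the crossed-product picture, where $\M$ consists of the sums $\sum f_m\delta_m$ with $f_m$ vanishing on $\{0,\dots,m-1\}$, resp.\ $\{m,\dots,-1\}$) one checks that left/right multiplication by such elements cannot halve the $\mathbb{Z}$-degree of $pu^{2m}q$ while keeping the coefficient function nonvanishing on the finite ``forbidden set'' that distinguishes $\B$ from $\M$ -- the supports get pushed back into $\M$. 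Indeed, if some partial isometry $v\in\M$ implemented $s_2^*(\cdot)s_2$ on the corner $s_2s_2^*\B s_2s_2^*$, then $s_2v^*$ would centralize that corner, forcing $s_2\in\M$ up to a scalar, which is false. The Bezout step suffers from the same problem: products of $q_1u^r$ and $q_2u^{r'}$ carry the intervening projections, so they do not yield $u^{ar+br'}$. The paper's combinatorics on the domains $V_m$ is the correct replacement for your reduction: a single point $t$ of the forbidden set lying in some $V_{-s}$ is moved by two applications of the partial-action axiom to give $-1\in V_{-1}$, i.e.\ $u^*\in D$, whence $D=\B$. I would recommend either adopting that route or proving a genuinely intrinsic version of your halving map; the obstacle you flag in your last paragraph is the actual theorem, not a technical point to be deferred.
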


Our approach to prove the results is to realize the relevant \( C^* \)-algebras as corners of partial crossed products and to analyze the structure of these crossed products by examining the properties of the associated automorphisms. We show that the \(^*\)-automorphism implementing the crossed product satisfies a suitable version of finite Rokhlin dimension, as introduced in \cite{AHS23}. We then apply \cite{AHS23} to conclude that every intermediate $C^*$-algebra between two crossed products $A \rtimes_{\alpha,r} G$ and $B \rtimes_{\alpha,r} G$ is itself a reduced crossed product $C \rtimes_{\alpha,r} G$ for some $C$ with $A \subset C \subset B$.

Our results resolve a number of open questions concerning the structure of $\mathcal{Q}_2$ posed in the survey \cite{ACR21}. In particular, we establish the maximality of $\mathcal{O}_2$ in $\mathcal{Q}_2$, analyze the structure of the unitary normalizer of $\mathcal{O}_2$ in $\mathcal{Q}_2$, provide a complete description of the fixed-point algebra $\mathcal{Q}_2^\sigma$, 
and prove that \(\mathcal{Q}_2^\sigma \cong \mathcal{Q}_2\cong \mathcal{Q}_2 \rtimes_{\sigma} \mathbb{Z}_2\).

\bigskip
\noindent
\textbf{Acknowledgements.} 
This work is partially supported by an NSERC Discovery Grant.

\section{Preliminaries and notations}

In this section, we provide necessary background, including basics on the 2-adic ring $C^*$-algebra, central sequences, partial actions, and Rokhlin dimension. We also set up notation that will be used throughout the rest of the paper.

\subsection{Some basics on $\mathcal{Q}_2$}
\label{SS:Q2basics}
As discussed in the introduction, the 2-adic ring $C^*$-algebra $\Q$ admits several constructions in the literature. This subsection summarizes some fundamental properties of $\Q$. For further details, we refer the reader to \cite{ACR21} and the references therein.

\begin{defn}\label{defn:2-adic}
The \emph{2-adic ring $C^*$-algebra} $\mathcal{Q}_2$ is the universal unital $C^*$-algebra generated by a unitary $u$ and an isometry $s_2$ satisfying the relations:
$$
\text{(I)} \quad s_2 u = u^2 s_2, \qquad
\text{(II)} \quad s_2 s_2^* + u s_2 s_2^* u^{-1} = 1.
$$
\end{defn}

By the universal property of $\Q$, for each $t\in \mathbb{T}$ there is $\alpha_t\in \mathrm{Aut}(\Q)$ defined by 
$
\alpha_t(u) = u\text{ and }\alpha_t(s_2) = t s_2. 
$
This yields an action of $\mathbb{T}$ on $\Q$, which is known as the \textit{gauge action} of $\Q$. 

The $C^*$-algebra $\Q$ contains the following notable $C^*$-subalgebras:
\begin{itemize}
\item[$\blacktriangleright$]
the canonical copy of the Cuntz algebra:
\[
\mathcal{O}_2 \coloneqq C^*(s_1, s_2), \quad \text{where } s_1 := u s_2;
\]

\item[$\blacktriangleright$]
a Cartan $C^*$-subalgebra of $\Q$:
\[
\mathcal{D}_2\coloneqq \overline{\operatorname{span}}\{u^m s_{2^n} s_{2^n}^* u^{-m}, 
m \in \mathbb{Z}, n \in \mathbb{N}\},
\]
where $s_{2^n} := s_2^n$;

\item[$\blacktriangleright$] the fixed-point algebra of the gauge action \(\alpha\):
\begin{equation}\label{eqn:defn_bunce}
\BB \coloneqq \overline{\operatorname{span}} \{ u^m s_{2^n} s_{2^n}^* u^k : m, k \in \mathbb{Z},\; n \in \mathbb{N} \};
\end{equation}
equivalently, \(\BB\) is the \(C^*\)-algebra generated by \(\D\) and \(u\);

\item[$\blacktriangleright$]
the fixed-point algebra of $\alpha|_{\C}$:
\begin{equation}\label{eqn:defn_car}
\F \coloneqq \overline{\operatorname{span}} \{ u^m s_{2^n} s_{2^n}^* u^{-k} : 0 \le m, k \le 2^n -1,\; n \in \mathbb{N} \}.
\end{equation}
\end{itemize}

 Clearly, we have the following inclusion lattice of the above \( C^* \)-subalgebras of 
 \( \Q \):
\[
\begin{tikzcd}[row sep=1.3em, column sep=large]
  \mathcal{Q}_2 \arrow[r, phantom, "\supset"] & 
  \BB \arrow[r, phantom, "\supset"] & 
  C^*(u) \\
  \mathcal{O}_2 \arrow[r, phantom, "\supset"] \arrow[u, phantom, "\cup"] & 
  \F \arrow[r, phantom, "\supset"] \arrow[u, phantom, "\cup"] & 
  \mathcal{D}_2 \\
  C^*(s_2) \arrow[u, phantom, "\cup"] & &
\end{tikzcd}\]

\begin{rmk}\label{rmk:form_adic}
Let us record some properties of the above $C^*$-subalgebras which will be useful later.
\leavevmode
\begin{enumerate}
    \item Repeated use of conditions (I) and (II) from Definition \ref{defn:2-adic} shows that
\begin{align*}
    \Q 
    &= \overline{\operatorname{span}} \left\{ u^{k_1} s_{2^{n_1}} s_{2^{n_2}}^* u^{k_2} : 0 \le k_1 \le 2^{n_1} - 1,\ k_2 \in \mathbb{Z},\ n_i \in \mathbb{N},\ i = 1,2 \right\}\\
    &= C^*(\mathcal{B}_2, u),\text{ and }\\
    \C &= \overline{\operatorname{span}} \left\{ u^{k_1} s_{2^{n_1}} s_{2^{n_2}}^* u^{-k_2} : 0 \le k_i \le 2^{n_i} - 1,\ n_i \in \mathbb{N},\ i = 1,2 \right\}.
\end{align*}

\item The spectrum of \(\mathcal{D}_2\) is homeomorphic to the 2-adic integers \(\z\) (\cite{LL12}).

\item The $C^*$-subalgebra $\mathcal{B}_2$ is a Bunce-Deddens algebra of type $2^\infty$
(\cite{BOS18, VY25}).

\item The $C^*$-subalgebra $\mathcal{F}_2$ is a UHF algebra of type $2^\infty$. 

\item $\Q$ is a Kirchberg algebra satisfying the UCT and \(K_0 (\Q) = K_1 (\Q) = \mathbb{Z}\) (\cite{LL12}). 
\end{enumerate}
\end{rmk}

\subsection{Central sequences}
In this subsection, we briefly recall some basic facts about central sequences (see \cite{Kirch04} for further details).

Let $A$ be a C$^*$-algebra. Define
$A^\infty := \ell^\infty(\mathbb{N}, A) / c_0(\mathbb{N}, A),
$
where $\ell^\infty(\mathbb{N}, A)$ is the C$^*$-algebra of bounded sequences in $A$, and $c_0(\mathbb{N}, A)$ consists of sequences that converge to zero in norm. Denote the canonical quotient map by
$\pi_\infty^A : \ell^\infty(\mathbb{N}, A) \to A^\infty
$. The \(C^*\)-algebra $A$ embeds into $A^\infty$ as the image under $\pi_\infty^A$ of constant sequences in $\ell^\infty(\mathbb{N}, A)$. For convenience, we use the notation $a \in A$ interchangeably with $\pi_\infty^A(a, a, \ldots) \in A^\infty$. The central sequence algebra is defined as
$$
A ^c \coloneqq A^\infty \cap A' 
:= \pi_\infty^A\left( \big\{(a_n) \in \ell^\infty(\mathbb{N}, A) : \lim_{n \to \infty} \|[a_n, a]\| = 0 \text{ for all } a \in A \big\} \right).
$$
The (two-sided) annihilator 
$$
\operatorname{Ann} (A)\coloneqq \operatorname{Ann}(A, A^\infty) := \left\{ b\in A^\infty: bA=0=Ab\right\}
$$ of \(A\) is an ideal of \(A^c\). Denote 
\[
F(A) := A^c / \operatorname{Ann}(A).
\]
Then  \(F(A)\) is unital for a \(\sigma\) -unital \(C^*\)-algebra \(A\) and coincides with \(A^c\) if \(A\) is unital (\cite[Proposition 1.10]{Kirch04}). Note that any 
\(^*\)-automorphism \(\alpha\) of a \(C^*\)-algebra \(A\) naturally induces a \(^*\)-automorphism of \(F(A)\). More generally, let $A$ and $B$ be \(C^*\)-algebras. 
A \(^*\)-isomorphism \(\phi : A \to B\) induces a \(^*\)-isomorphism
\[
\widetilde{\phi} : F(A) \to F(B), \quad 
\pi_{\infty}^A((a_n)) + \operatorname{Ann}(A) \mapsto \pi_{\infty}^B((\phi(a_n))) + \operatorname{Ann}(B)
\]
for \(\pi_\infty^A((a_n)) \in A^c\).

Let \( i: A \hookrightarrow B \) be an inclusion of \( C^* \)-algebras. It is worth mentioning that the induced map \( i^\infty : A^\infty \to B^\infty \) does not, in general, descend to a well-defined injective \( ^* \)-homomorphism
\[
\widetilde{i} : F(A) \to F(B), \quad \pi_\infty^A(a_n) + \operatorname{Ann}(A) \mapsto \pi_\infty^B(a_n) + \operatorname{Ann}(B),
\]
for \( \pi_\infty^A((a_n)) \in A^c \).

 Some instances where $\widetilde{i}$ is well-defined are discussed in \cite[Lemma 5.5]{AHS23}. For later use, we provide some details of a specific example here.

\begin{ex}\label{ex:sequence_algebra_bunce}
Let $B \coloneqq C \otimes D$ be any C$^*$-algebra completion of the algebraic tensor product $C \otimes_{\mathrm{alg}} D$, where $D$ is a unital C$^*$-algebra. Define $A \coloneqq C \otimes 1_D \subset B$. Then the inclusion $i : A \hookrightarrow B$ induces a well-defined \(^*\)-homomorphism $ \widetilde{i}: F(A) \to F(B)$. Indeed, let \(a\coloneqq \pi^A_{\infty}(a_1 \otimes 1, a_2 \otimes 1, \ldots) \in A^c\) 
and
\(\tilde a\coloneqq \pi^A_{\infty}(\tilde a_1 \otimes 1, \tilde a_2 \otimes 1, \ldots) \in A^c\)
such that $a+\operatorname{Ann}(A)=\tilde a+\operatorname{Ann}(A)$. 

Let $z \coloneqq \pi^B_\infty(c \otimes d, c \otimes d, \ldots) \in B$ without loss of generality. Then
$az = \pi^B_{\infty}(a_1 c \otimes d, a_2 c \otimes d, \ldots) = \pi^B_{\infty}(c a_1 \otimes d, c a_2 \otimes d, \ldots) = za
$. Hence $a \in B^c$. Similarly, $\tilde{a} \in B^c$.
Also,
$$
(a - \tilde{a}) z = \pi^B_\infty\big((a_1 - \tilde{a}_1) c \otimes d, (a_2 - \tilde{a}_2) c \otimes d, \ldots \big) = 0;
$$
similarly, $z (a - \tilde{a}) = 0$. Hence $a - \tilde{a} \in \operatorname{Ann}(B)$. Therefore, $\widetilde{i}(a) = \widetilde{i}(\tilde{a})$.

\end{ex}

The following lemma should be well known to experts; we include it for completeness.

\begin{lem}\label{lem:sequence_algebra_stablization}
Let \(A, B, C, D\) be separable \(C^*\)-algebras with inclusion maps \(i_1 : A \hookrightarrow B\) and \(i_2 : C \hookrightarrow D\). Suppose \(\phi_1 : A \to C\) and \(\phi_2 : B \to D\) are \(^*\)-homomorphisms such that
\(
i_2 \circ \phi_1 = \phi_2 \circ i_1.
\)
\begin{enumerate}
    \item If \(\phi_1\) and \(\phi_2\) are \(^*\)-isomorphisms and \(\widetilde{i}_1 : F(A) \to F(B)\) is a well-defined \(^*\)-homomorphism, then \(\widetilde{i}_2 : F(C) \to F(D)\) exists and the following diagram commutes:
    \begin{equation}\label{diag:central}
    \begin{tikzcd}
    F(A) \arrow[r, "\widetilde{i}_1"] \arrow[d, "\widetilde{\phi}_1"'] & F(B) \arrow[d, "\widetilde{\phi}_2"] \\
    F(C) \arrow[r, "\widetilde{i}_2"] & F(D)
    \end{tikzcd}
    \end{equation}

    \item If \(A \coloneqq pCp\), \(B \coloneqq pDp\), where \(p\) is a full projection in both \(C\) and \(D\), \(\phi_i\) are the canonical inclusions, and \(\widetilde{i}_1 : F(A) \to F(B)\) is a well-defined \(^*\)-homomorphism, then \(\widetilde{i}_2 : F(C) \to F(D)\) exists and fits into the commutative diagram \eqref{diag:central}.
\end{enumerate}
\end{lem}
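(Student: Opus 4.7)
The plan is to handle both parts by transferring central sequences through the given data, reducing the well-definedness of \(\widetilde{i}_2\) to the already-assumed well-definedness of \(\widetilde{i}_1\). In part (1) the transfer is via the \(^*\)-isomorphisms \(\phi_i\); in part (2) it is via the canonical Morita-type identification \(F(pCp) \cong F(C)\) induced by a full projection.

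For part (1), I would first define \(\widetilde{i}_2 := \widetilde{\phi}_2 \circ \widetilde{i}_1 \circ \widetilde{\phi}_1^{-1}\). Since \(\phi_1\) and \(\phi_2\) are \(^*\)-isomorphisms, the induced maps \(\widetilde{\phi}_i\) are \(^*\)-isomorphisms between the corresponding \(F\)-algebras, so this composition is a well-defined \(^*\)-homomorphism. The task then reduces to checking that it agrees with the naive assignment \(\pi_\infty^C(c_n) + \operatorname{Ann}(C) \mapsto \pi_\infty^D(c_n) + \operatorname{Ann}(D)\). Given \((c_n)\) with \(\pi_\infty^C(c_n) \in C^c\), set \(a_n := \phi_1^{-1}(c_n)\); then \((a_n) \in A^c\), and the identity \(i_2 \circ \phi_1 = \phi_2 \circ i_1\) applied pointwise gives \(\phi_2(a_n) = i_2(c_n)\) inside \(D\). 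Chasing this through the definitions yields both the naive description of \(\widetilde{i}_2\) and the commutativity of the diagram \eqref{diag:central}.

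For part (2), the key additional ingredient is the canonical isomorphism \(F(pCp) \cong F(C)\) arising from a full projection \(p\) (see for instance \cite[Proposition 1.9]{Kirch04}); it is induced on central sequence algebras by the corner embedding \(pCp \hookrightarrow C\) and is natural in the ambient algebra. Applied on both sides, one obtains \(F(A) \cong F(C)\) and \(F(B) \cong F(D)\). The plan is to transport \(\widetilde{i}_1\) along these isomorphisms to produce \(\widetilde{i}_2\) and then verify it agrees with the naive map induced by \(i_2\). Naturality of the Morita-type isomorphism with respect to the inclusion \(C \hookrightarrow D\) and its restriction \(pCp \hookrightarrow pDp\) then yields the commutativity of \eqref{diag:central} as a diagram chase.

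The main obstacle will be controlling representatives and annihilators consistently across the different ambient algebras. In part (2), the Morita identification requires lifting an arbitrary central sequence in \(C\) to an equivalent one supported in the corner \(pCp\), and the annihilators \(\operatorname{Ann}(A, A^\infty)\) versus \(\operatorname{Ann}(A, B^\infty)\) must be compared carefully; the assumption that \(p\) is full in \emph{both} \(C\) and \(D\) is precisely what guarantees compatibility on both sides, so that the transported map genuinely coincides with the intrinsic map induced by the inclusion rather than an isomorphic surrogate. Once this compatibility is in place, the remainder of the argument is a formal consequence of the commuting square of inclusions.
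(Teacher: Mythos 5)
Your proposal is correct and follows essentially the same route as the paper: in part (1) you define \(\widetilde{i}_2 = \widetilde{\phi}_2 \circ \widetilde{i}_1 \circ \widetilde{\phi}_1^{-1}\) and check it agrees with the naive assignment via \(i_2 \circ \phi_1 = \phi_2 \circ i_1\), and in part (2) you invoke the same Kirchberg full-corner isomorphism \(F(pCp) \cong F(C)\) (with inverse \(c + \operatorname{Ann}(C) \mapsto pcp + \operatorname{Ann}(A)\)) to transport \(\widetilde{i}_1\). The only cosmetic difference is the reference label within Kirchberg's paper; the argument itself matches.
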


\begin{proof}
(i)
Since $\phi_1$ is a \(^*\)-isomorphism, $\widetilde\phi_1$ is invertible. Define $\widetilde{i}_2 : F(C) \rightarrow F(D)$ by $\widetilde{i}_2 := \widetilde{\phi}_2 \circ \widetilde{i}_1 \circ \widetilde\phi_1^{-1}$. Then for $\pi_\infty^C((c_n)) + \operatorname{Ann}(C) \in F(C)$, using 
\(
i_2 \circ \phi_1 = \phi_2 \circ i_1.
\) 
we get the following 
\begin{align*}
\widetilde{i}_2(\pi_\infty^C((c_n)) + \operatorname{Ann}(C))
&= \widetilde{\phi}_2 \circ \widetilde{i}_1 \circ \widetilde\phi_1^{-1}(\pi_\infty^C((c_n)) + \operatorname{Ann}(C))\\ &= \widetilde{\phi}_2 \circ \widetilde{i}_1(\pi_\infty^A(\phi_1^{-1}(c_n)) + \operatorname{Ann}(A)) \\
&= \widetilde{\phi}_2(\pi_\infty^B(\phi_1^{-1}(c_n))  + \operatorname{Ann}(B))\\
&= \pi_\infty^D((c_n)) + \operatorname{Ann}(D).
\end{align*}

(ii) By \cite[Corollary 1.10]{Kirch04}\footnote{See Appendix (A.1) in the unpublished version of \cite{Kirch04} for details.}, the inclusion \(\phi_1\) induces a \(^*\)-isomorphism \(\widetilde{\phi}_1: F(A) \to F(C)\) defined by
\[
\widetilde{\phi}_1(c + \operatorname{Ann}(A)) = c + \operatorname{Ann}(C),
\]
with inverse \(\xi_1 \coloneqq \widetilde{\phi}_1^{-1}: F(C) \to F(A)\) given by
\[
\xi_1(c + \operatorname{Ann}(C)) = p c p + \operatorname{Ann}(A).
\]
Similarly for $\phi_2$.

Define \(\widetilde{i}_2 : F(C) \to F(D)\) by \(\widetilde{i}_2 := \widetilde{\phi}_2 \circ \widetilde{i}_1 \circ \xi_1\). Then for any \(c + \operatorname{Ann}(C) \in F(C)\), we have
\begin{align*}
\widetilde{i}_2(c + \operatorname{Ann}(C)) 
&= \widetilde{\phi}_2 \circ \widetilde{i}_1 \circ \xi_1 (c + \operatorname{Ann}(C)) = \widetilde{\phi}_2 \circ \widetilde{i}_1(p c p + \operatorname{Ann}(A)) \\
&= \widetilde{\phi}_2(p c p + \operatorname{Ann}(B)) = c + \operatorname{Ann}(D).
\end{align*}
Thus, the result follows.
\end{proof}


\subsection{Partial actions}
In this subsection, we provide a brief overview of partial actions and group actions on $C^*$-algebras. 

\begin{defn}
A \textit{partial action $\beta : G \curvearrowright A$ of a discrete group $G$ on a $C^*$-algebra $A$} is a pair consisting of a family of closed two-sided ideals $\{ I_g \}_{g \in G}$ of $A$ and a family of \(^*\)-isomorphisms $\{ \beta_g : I_{g^{-1}} \to I_g \}_{g \in G}$ satisfying the following: 
\begin{enumerate}
    \item $I_e = A$ and $\beta_e = \operatorname{id}_A$;
    \item \(\beta_h^{-1}(I_h \cap I_{g^{-1}}) \subset I_{(gh)^{-1}}\) and \(
   \beta_g \circ \beta_h(a) = \beta_{gh}(a) \quad \text{for all } a \in \beta_h^{-1}(I_h \cap I_{g^{-1}})\) and \(g,h \in G\).
\end{enumerate}
The quadruple \( (A, G, \{I_g\}_{g \in G}, \{\beta_g\}_{g \in G}) \) is called a \textit{partial dynamical system}.

\end{defn}
If \(I_g = A\) for all \(g \in G\), then \(\beta : G \curvearrowright A\) is the usual action of \(G\) on \(A\), often referred to as a \textit{global action}. Conversely, given a global action \(\alpha : G \curvearrowright A\) and a family of ideals \(\{I_g\}_{g \in G}\) of \(A\), if the maps \(\beta_g := \alpha_g|_{I_{g^{-1}}} : I_{g^{-1}} \to I_g\) are homomorphisms satisfying conditions (i)–(ii) above, then we say that \(\beta : G \curvearrowright A\) is a \textit{partial subaction} of \(\alpha\).

Now suppose \( A = C(X) \) for some compact Hausdorff space \( X \). By Gelfand duality, a partial action \( \beta : G \curvearrowright C(X) \) corresponds to a partial action \( \eta : G \curvearrowright X \) by partial homeomorphisms. Let \( I_g = C_0(X_g) \) for an open subset \( X_g \subset X \). Then each \(^*\)-isomorphism \( \beta_g : C_0(X_{g^{-1}}) \to C_0(X_g) \) corresponds to a homeomorphism \( \eta_{g^{-1}} : X_g \to X_{g^{-1}} \) such that \( \beta_g(f) = f \circ \eta_{g^{-1}} \) for all \( f \in C_0(X_{g^{-1}}) \).

Let \( \alpha : G \curvearrowright A \) be a partial action of a discrete group \( G \) on a \( C^* \)-algebra \( A \). For each \( g \in G \), denote by \( \lambda_g \) the unitary element in the multiplier algebra \( M(A \rtimes_{\alpha} G) \) implementing the action, in the sense that
\[
\alpha_g(a) = \lambda_g a \lambda_g^*
\quad \text{for all } a \in I_{g^{-1}},\, g \in G.
\]
We consider the associated partial crossed product \( A \rtimes_{\alpha} G \) as the completion of the algebraic span of \( \{ a_g \lambda_g : a_g \in I_g,\, g \in G \} \) under the full \( C^* \)-norm (\cite{Exel2017, SE17}).

The following ``associative property" of partial actions will play a role later.

\begin{prop}[{\cite[Proposition 4.2]{SE17}}]\label{prop:semidirect_partial}
Let $A$ be a $C^*$-algebra and $G \rtimes_\alpha H$ a semidirect product of groups. Suppose we are given a partial dynamical system
$(A, G \rtimes_\alpha H, \{ I_{(g,h)} \}_{(g,h) \in G \rtimes_\alpha H}, \{ \theta_{(g,h)} \}_{(g,h) \in G \rtimes_\alpha H})$
such that $I_{(g,h)} \subset I_{(g,e)} \cap I_{(e,h)}$ for every $g \in G$ and $h \in H$.

Consider the partial dynamical system
$(A, G, \{ I_{(g,e)} \}_{g \in G}, \{ \theta_{(g,e)} \}_{g \in G})$ obtained by restricting $\theta$ to the subgroup $G \times \{e\}$. Then there exists a partial action of $H$ on the crossed product $A \rtimes_{\theta} G$ defined as follows: for each $h \in H$, one has
\begin{equation}
\beta_h : I_{(e,h^{-1})} \rtimes_{\theta} G \to I_{(e,h)} \rtimes_{\theta} G, \quad \beta_h(a_g \delta_g) = \theta_{(e,h)}(a_g) \delta_{\alpha_h(g)}.
\end{equation}
Moreover, there is a $^*$-isomorphism
\begin{equation}
\varphi : A \rtimes_{\theta} (G \rtimes_\alpha H) \to (A \rtimes_\theta G) \rtimes_\beta H, \quad \varphi(a_{(g,h)} \delta_{(g,h)}) = (a_{(g,h)} \delta_g) \delta_h.    
\end{equation}
\end{prop}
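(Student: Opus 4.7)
My plan is to exploit the universal property of the full partial crossed product by constructing a covariant representation of $(A, G \rtimes_\alpha H, \theta)$ inside $(A \rtimes_\theta G) \rtimes_\beta H$, thereby producing $\varphi$, and to build an inverse $\psi$ by a two-step universal argument. I would organize the work in four steps: (1) verify that the restriction $\theta|_{G \times \{e\}}$ is a partial action of $G$; (2) show each $I_{(e,h)}$ is $\theta|_G$-invariant, so that $I_{(e,h)} \rtimes_\theta G$ makes sense as a closed two-sided ideal of $A \rtimes_\theta G$; (3) define $\beta_h$ and check the partial action axioms; (4) build $\varphi$ and $\psi$ and verify they are mutually inverse.

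Step (1) is immediate from $(g_1,e)(g_2,e) = (g_1 g_2, e)$ in $G \rtimes_\alpha H$: the domain inclusions and cocycle identities transfer directly from those of $\theta$. For step (2), the key claim is
\[
\theta_{(g,e)}\bigl(I_{(e,h)} \cap I_{(g^{-1},e)}\bigr) \subseteq I_{(e,h)} \cap I_{(g,e)}.
\]
Writing $a = \theta_{(e,h)}(b)$ for a suitable $b \in I_{(e,h^{-1})}$, and using $(g,e)(e,h) = (g,h)$ together with the composition law for $\theta$, one obtains $\theta_{(g,e)}(a) = \theta_{(g,h)}(b) \in I_{(g,h)} \subseteq I_{(e,h)}$ by the standing hypothesis. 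Combined with the symmetric statement for $h^{-1}$, this invariance identifies $I_{(e,h)} \rtimes_\theta G$ with the closure of the span of $\{a_g \delta_g : a_g \in I_{(e,h)} \cap I_{(g,e)}\}$ as a two-sided ideal.

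For step (3), the prescribed formula
\[
\beta_h(a_g \delta_g) = \theta_{(e,h)}(a_g)\, \delta_{\alpha_h(g)}
\]
is modelled on conjugation by $\lambda_{(e,h)}$ in $A \rtimes_\theta (G \rtimes_\alpha H)$, since $(e,h)(g,e)(e,h)^{-1} = (\alpha_h(g), e)$. Multiplicativity, $*$-preservation, and the fact that $\beta_h$ is a $*$-isomorphism from $I_{(e,h^{-1})} \rtimes_\theta G$ onto $I_{(e,h)} \rtimes_\theta G$ all follow from this implementation; no direct algebraic checks on the dense $*$-subalgebra are required beyond rewriting the multiplication law in terms of the already-constructed partial action on $A$. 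The partial action axioms for $\{\beta_h\}_{h \in H}$ then reduce to those for $\{\theta_{(e,h)}\}_h$ combined with the semidirect product rule $\alpha_{h_1 h_2} = \alpha_{h_1} \circ \alpha_{h_2}$.

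For step (4), the inclusion $A \hookrightarrow (A \rtimes_\theta G) \rtimes_\beta H$ together with the family of partial multipliers $\{\lambda_g \mu_h\}$ (where $\lambda_g$ implements $\theta|_G$ and $\mu_h$ implements $\beta$) forms a covariant representation of $(A, G \rtimes_\alpha H, \theta)$ and induces $\varphi$ by the universal property of the full partial crossed product. An inverse $\psi$ is built in two stages: first, the inclusion $A \hookrightarrow A \rtimes_\theta (G \rtimes_\alpha H)$ with $\{\lambda_{(g,e)}\}$ yields a $*$-homomorphism $A \rtimes_\theta G \to A \rtimes_\theta (G \rtimes_\alpha H)$, and then the partial unitaries $\{\lambda_{(e,h)}\}$ promote it to a covariant representation of $(A \rtimes_\theta G, H, \beta)$. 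The main obstacle will be this last covariance check: one must verify that conjugation by $\lambda_{(e,h)}$ inside $A \rtimes_\theta (G \rtimes_\alpha H)$ correctly implements $\beta_h$ on the image of $A \rtimes_\theta G$, which requires careful bookkeeping of the ideal-valued domains in conjunction with the two semidirect product compositions $(e,h)(g,e) = (\alpha_h(g), h)$ and $(g,e)(e,h^{-1}) = (g, h^{-1})$. Once this is settled, that $\varphi$ and $\psi$ are mutual inverses is immediate on the canonical dense spanning sets $\{a\, \lambda_{(g,h)}\}$ and $\{(a\, \lambda_g)\, \mu_h\}$, completing the proof.
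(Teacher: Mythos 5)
The paper does not actually prove this proposition: it is quoted verbatim from \cite[Proposition 4.2]{SE17} and used as a black box (in the proof of Theorem \ref{thm:bunce_sym}), so there is no in-paper argument to compare yours against; I can only assess your proposal on its own terms.

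Your outline follows the standard route, and the one genuinely load-bearing computation is done correctly: the step (2) verification that $\theta_{(g,e)}\bigl(I_{(e,h)}\cap I_{(g^{-1},e)}\bigr)\subseteq I_{(e,h)}\cap I_{(g,e)}$, via $(g,e)(e,h)=(g,h)$ and the standing hypothesis $I_{(g,h)}\subset I_{(g,e)}\cap I_{(e,h)}$, is exactly what makes $I_{(e,h)}\rtimes_\theta G$ a well-defined ideal of $A\rtimes_\theta G$ and hence makes $\beta$ meaningful. The soft spot is in step (3): you justify multiplicativity of $\beta_h$ by saying it is ``implemented'' by conjugation by $\lambda_{(e,h)}$ inside $A\rtimes_\theta(G\rtimes_\alpha H)$. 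At that stage you have not yet produced the canonical map $\iota\colon A\rtimes_\theta G\to A\rtimes_\theta(G\rtimes_\alpha H)$, let alone shown it injective on the ideals $I_{(e,h^{\pm1})}\rtimes_\theta G$; conjugation in the larger algebra only yields $\iota\circ\beta_h=\mathrm{Ad}(\lambda_{(e,h)})\circ\iota$, which does not by itself make $\beta_h$ a $*$-homomorphism unless $\iota$ is injective --- and that injectivity is morally part of what the proposition establishes, so the shortcut is circular. The repair is the direct check on the dense $*$-subalgebra that you half-gesture at: using $(\alpha_h(g)^{-1},e)(e,h)=(e,h)(g^{-1},e)$ and the composition law for $\theta$ on the relevant domains, verify multiplicativity and $*$-preservation of $\beta_h$ by hand, and then note that $\beta_h$ carries covariant representations to covariant representations, hence is contractive for the universal norm and extends to the completion. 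With that repair, and the routine domain bookkeeping you already flag for the covariance check in step (4), the two-step universal-property construction of $\varphi$ and $\psi$ and the verification that they are mutually inverse on spanning sets go through.
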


\subsection{Rokhlin dimension}

Let $\operatorname{End}(A)$ denote the set of \(^*\)-endomorphisms of a \(C^*\)-algebra $A$, and $\operatorname{Aut}(A)$ denote the group of \(^*\)-automorphisms of $A$. Next, we review the definitions of the Rokhlin property and the Rokhlin dimension for actions 
of $\mathbb{Z}$.

\begin{defn}[{\cite[Definition 1.2]{BH14}}]\label{defn:Rokhlin}
Let \( A \) be a \( C^* \)-algebra and let \( \alpha \in \mathrm{Aut}(A) \). We say that \( \alpha \) has the \emph{Rokhlin property} if for every positive integer \( p \), every finite set \( F \subset A \), and every \( \varepsilon > 0 \), there exist mutually orthogonal positive contractions
\[
f_{0,0}, \dots, f_{0,p-1},\ f_{1,0}, \dots, f_{1,p} \in A
\]
such that the following hold:
\begin{enumerate}
  \item \( \left\| \left( \sum\limits_{r=0}^1 \sum\limits_{j=0}^{p-1 + r} f_{r,j} \right) a - a \right\| < \varepsilon \) for all \( a \in F \);
  \item \( \| [f_{r,j}, a] \| < \varepsilon \) for all \( r, j \) and all \( a \in F \);
  \item \( \| \alpha(f_{r,j})a - f_{r,j+1}a \| < \varepsilon \) for all \( a \in F \), \( r = 0, 1 \), and \( j = 0, \dots, p - 2 + r \);
  \item \( \| \alpha(f_{0,p-1} + f_{1,p})a - (f_{0,0} + f_{1,0})a \| < \varepsilon \) for all \( a \in F \).
\end{enumerate}
\end{defn}

Our focus now shifts to the context of endomorphisms.

\begin{defn}[{\cite[Definition 2.1]{BH14}}]\label{defn:Rokhlin_endo}
Let \( A \) be a \( C^* \)-algebra and \( \alpha \in \operatorname{End}(A)\). We say that \( \alpha \) has the \emph{Rokhlin property} if for every positive integer \( p \), every finite subset \( F \subset \alpha^p(A) \), and every \( \varepsilon > 0 \), there exist pairwise orthogonal positive contractions
\[
f_{0,0}, \dots, f_{0,p-1},\ f_{1,0}, \dots, f_{1,p} \in A
\]
satisfying conditions (i)-(iv) of Definition \ref{defn:Rokhlin}.
\end{defn}

There are different Rokhlin dimension invariants for actions of $\mathbb{Z}$. In the definition below, we use the single tower version \cite{HP15, HWZ15}.

\begin{defn}\label{defn:Rokhlin_dimension_single}
Let $A$ be a separable $C^*$-algebra, and  \( \alpha \in \mathrm{Aut}(A) \)
\begin{enumerate}
\item Let \( \alpha \in \mathrm{Aut}(A) \), 
regarded as an action of \( \mathbb{Z} \). We say that \( \alpha \) has \emph{Rokhlin dimension} \( d \) if \( d \) is the smallest non-negative integer such that for every \( \varepsilon > 0 \), every finite set \( F \subset A \), and every \( N \in \mathbb{N} \), there exist positive contractions
\[
\{ f_k^{(l)} \in A \mid 1 \leq k \leq N,\ 0 \leq l \leq d \}
\]
satisfying the following conditions for all \( a \in F \):
\begin{enumerate}
  \item[(a)] \( \left\| \left( \sum\limits_{l=0}^d \sum\limits_{k=1}^N f_k^{(l)} \right) a - a \right\| < \varepsilon \);
  \item[(b)] \( \| f_k^{(l)} f_j^{(l)} a \| < \varepsilon \) whenever \( 1\le j \neq k\le N \), for all \( 0\le l\le d \);
  \item[(c)] \( \| \alpha(f_k^{(l)}) a - f_{k+1}^{(l)} a \| < \varepsilon \) for all \( 1\le k\le N, 0\le l\le d\), with \( f_{N+1}^{(l)} := f_1^{(l)} \);
  \item[(d)] \( \| [a, f_k^{(l)}] \| < \varepsilon \) for all \( 1\le k\le N, 0\le l\le d\).
\end{enumerate}

\item Let \( G \) be a finite group, and let \( \alpha : G \to \mathrm{Aut}(A) \) be an action of \( G \) on a \( C^* \)-algebra \( A \). We say that \( \alpha \) has \emph{Rokhlin dimension} \( d \) if \( d \) is the smallest non-negative integer such 
\[
\{ f_g^{(l)} \in A \mid g \in G,\ 0 \leq l \leq d \}
\]
satisfying the following conditions for all \( a \in F \):

\begin{enumerate}
  \item[(a)] \( \left\| \left( \sum\limits_{l=0}^d \sum\limits_{g \in G} f_g^{(l)} \right) a - a \right\| < \varepsilon \);
  \item[(b)] \( \| f_g^{(l)} f_h^{(l)} a \| < \varepsilon \) for all \( g \ne h \) in \( G \), and for all \(0\le l\le d\);
  \item[(c)] \( \| \alpha_g(f_h^{(l)}) a - f_{gh}^{(l)} a \| < \varepsilon \) for all \( g, h \in G \) and all \(0\le l\le d\);
  \item[(d)] \( \| [a, f_g^{(l)}] \| < \varepsilon \) for all \( g \in G \) and all \( 0\le l \le d\).
\end{enumerate}
\end{enumerate}
\end{defn}

\begin{defn}[{\cite[Definition 2.11]{AHS23}}]\label{defn:pointwise_finite_Rokhlin}
 Let $G$ be a countable discrete group and suppose $\alpha: G \to \mathrm{Aut}(A)$ be an action. We say that $\alpha$ has \textit{pointwise finite Rokhlin dimension }if for every $s \in G \setminus \{e\}$, the restriction of $\alpha$ to the cyclic subgroup generated by $s$ has finite Rokhlin dimension.
\end{defn}

We now recall the notion of Rokhlin dimension for an action on an inclusion \( A \subset B \), as introduced in \cite{AHS23}. The essential idea is to construct Rokhlin towers that lie in \( A \), while being approximately central with respect to elements of \( B \).

\begin{defn}[{\cite[Definition 2.13]{AHS23}}]
\label{defn:Rokhlin_dim_inclusion}
Let $A \subset B$ be an inclusion of separable C$^*$-algebras.
\begin{enumerate}
    \item Suppose \( \alpha \in \mathrm{Aut}(B) \) is a \(^*\)-automorphism that leaves \( A \subset B \) invariant (viewed as an action of \( \mathbb{Z} \)). We say that \( \alpha \) has \emph{relative Rokhlin dimension \( d \)} (with respect to the inclusion \( A \subset B \)) if \( d \) is the smallest non-negative integer such that for every \( \varepsilon > 0 \), every finite set \( F \subset B \), and every \( N \in \mathbb{N} \), there exist positive contractions
\[
\{ f_k^{(l)} \in A \mid 1 \leq k \leq N,\ 0 \leq l \leq d \}
\]
satisfying the following conditions for all \( a \in F \):

\begin{enumerate}
  \item \( \left\| \left( \sum_{l=0}^d \sum_{k=1}^N f_k^{(l)} \right) a - a \right\| < \varepsilon \);
  \item \( \| f_k^{(l)} f_j^{(l)} a \| < \varepsilon \) for all \( j \ne k \) and all \( l \);
  \item \( \| \alpha(f_k^{(l)}) a - f_{k+1}^{(l)} a \| < \varepsilon \) for all \( k, l \) with \( f_{N+1}^{(l)} := f_1^{(l)} \);
  \item \( \| [a, f_k^{(l)}] \| < \varepsilon \) for all \( k, l \).
\end{enumerate}

\item Let \( G \) be a finite group, and let \( \alpha : G \to \mathrm{Aut}(B) \) be an action that leaves \( A \subset B \) invariant. We say that \( \alpha \) has \emph{relative Rokhlin dimension \( d \)} (with respect to the inclusion \( A \subset B \)) if \( d \) is the smallest non-negative integer such that for every \( \varepsilon > 0 \) and every finite subset \( F \subset B \), there exist positive contractions
\[
\{ f_g^{(l)} \in A \mid g \in G,\ 0 \leq l \leq d \}
\]
satisfying the following conditions for all \( a \in F \):

\begin{enumerate}
  \item \( \left\| \left( \sum_{l=0}^d \sum_{g \in G} f_g^{(l)} \right) a - a \right\| < \varepsilon \),
  \item \( \| f_g^{(l)} f_h^{(l)} a \| < \varepsilon \) for all \( g \ne h \in G \) and all \( l \),
\item \( \| \alpha_g(f_h^{(l)})a - f_{gh}^{(l)} a\| < \varepsilon \) for all \( g, h \in G \) and all \( l \),
 \item \( \| [a, f_g^{(l)}] \| < \varepsilon \) for all \( g \in G \) and all \( l \).
\end{enumerate}

\item Let $G$ be a countable discrete group, and suppose $\alpha: G \to \mathrm{Aut}(B)$ is an action that leaves $A \subset B$ invariant. We say that $\alpha$ has \textit{pointwise finite relative Rokhlin dimension} if for every $s \in G \setminus \{e\}$, the restriction of $\alpha$ to the subgroup generated by $s$ has finite relative Rokhlin dimension.
\end{enumerate}
\end{defn}

\begin{rmk}\label{rmk:rokhlin_relative}
Some remarks are in order.
\begin{enumerate}
\item 
Definition \ref{defn:Rokhlin} and Definition \ref{defn:Rokhlin_dimension_single} can be reformulated in terms of \( F(A) \). For example, a \(^*\)-automorphism \( \alpha \) is said to have the Rokhlin property if, for any \( m \in \mathbb{N} \), there exists a partition of unity consisting of positive contractions  
\[
f_{0,0}, f_{0,1}, \ldots, f_{0,m-1},\quad f_{1,0}, f_{1,1}, \ldots, f_{1,m} \in F(A)
\]
such that
\(
\alpha(f_{j,k}) = f_{j,k+1}\)
for $j = 0,1 \text{ and } k = 0,1,\ldots, m-2+j$,
and
\(
\alpha(f_{0,m-1} + f_{1,m}) = f_{0,0} + f_{1,0}
\) (\cite{Hirshberg22}).

\item Definition \ref{defn:Rokhlin} is stated in terms of a double Rokhlin tower. A \(^*\)-automorphism with the Rokhlin property has Rokhlin dimension at most \(1\) in the sense of our single tower definition (see \cite[Proposition 2.8]{HWZ15} for example). 
    \item Let $G$ be a countable discrete group, and let $\alpha: G \to \mathrm{Aut}(B)$ be an action that leaves the subalgebra $A \subset B$ invariant. If the \(^*\)-homomorphism $\widetilde{i}: F(A) \to F(B)$ exists and the action $\alpha|_A \in \mathrm{Aut}(A)$ has the pointwise finite Rokhlin property, then the inclusion $A \subset B$ has the pointwise finite relative Rokhlin property (\cite[Remark 2.13]{AHS23}).
\end{enumerate}
\end{rmk}

We end this subsection with the following result which exhausts all intermediate $C^*$-subalgebras for a class of inclusions. 

\begin{theo}[{\cite[Theorem 1.1]{AHS23}}]\label{thm:pointwise_finite_inclusion_criteria}
Let \( G \) be a countable discrete group with the Approximation Property (AP), and let \( B \) be a separable \( C^* \)-algebra. Let \( \alpha: G \to \mathrm{Aut}(B) \) be an action, and let \( A \subset B \) be a \( G \)-invariant subalgebra. Suppose the action of \( G \) on the inclusion \( A \subset B \) has pointwise finite relative Rokhlin dimension.

Then any intermediate \( C^* \)-algebra
\(
A \rtimes_{\alpha, r} G \subset C \subset B \rtimes_{\alpha, r} G
\)
is of the form \( B_1 \rtimes_{\alpha, r} G \), where \( A \subset B_1 \subset B \) is a \( G \)-invariant \( C^* \)-subalgebra.
\end{theo}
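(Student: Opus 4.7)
My plan is to identify the coefficient algebra as $B_1 := C \cap B$ (viewing $B$ as the canonical copy inside $B \rtimes_{\alpha,r} G$) and prove $C = B_1 \rtimes_{\alpha,r} G$. The easy half, namely that $B_1$ is a $G$-invariant $C^*$-subalgebra of $B$ containing $A$ and that $B_1 \rtimes_{\alpha,r} G \subset C$, follows from two observations: first, $A \subset A \rtimes_{\alpha,r} G \subset C$ so $A \subset B_1$; second, the canonical unitaries $u_g$ lie in $M(A \rtimes_{\alpha,r} G) \subset M(C)$, so $\operatorname{Ad}(u_g)$ preserves $C$ while implementing $\alpha_g$ on $B$, forcing $\alpha_g(B_1) = B_1$.

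The substantive content is the reverse inclusion $C \subset B_1 \rtimes_{\alpha,r} G$. Let $E : B \rtimes_{\alpha,r} G \to B$ denote the canonical faithful conditional expectation and write $E_g(x) := E(x u_g^*)$ for the $g$-th Fourier coefficient. The first key step is to show $E_g(C) \subset B_1$ for all $g \in G$; since $u_g \in M(C)$, this reduces to showing $E(C) \subset C$ (as $E$ already lands in $B$ by construction). Fix $x \in C$ and $\varepsilon > 0$, and approximate $x$ by a finitely supported Fourier sum $x' = \sum_{h \in F_0} x_h u_h$. For each non-identity $h \in F_0$, apply the pointwise finite relative Rokhlin dimension hypothesis to the cyclic subgroup $\langle h \rangle$ to obtain positive contractions $\{f_k^{(l)}\} \subset A$ that approximately partition unity, nearly commute with the $x_h$'s, are cyclically shifted by $\alpha_h$, and are pairwise orthogonal within each level. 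The diagonal compression $\sum_{l,k} f_k^{(l)}\, x\, f_k^{(l)}$ lies in $C$ (since each $f_k^{(l)} \in A \subset C$); it annihilates the off-diagonal terms $x_h u_h$ with $h \neq e$ because $\alpha_h(f_k^{(l)}) \approx f_{k+h}^{(l)}$ is orthogonal to $f_k^{(l)}$ once $N$ exceeds $|h|$; and it approximates the diagonal part $E(x)$ by the approximate-unit and centrality properties. Iterating the averaging across the cyclic subgroups generated by the elements of $F_0 \setminus \{e\}$ and letting $\varepsilon \to 0$ places $E(x)$ in the norm closure of $C$, hence in $C \cap B = B_1$.

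Having established $E_g(x) \in B_1$ for all $x \in C$ and $g \in G$, the final step invokes the Approximation Property of $G$ via the Cowling–Haagerup description: there is a net of finitely supported functions $\varphi_i$ on $G$ such that the associated Herz–Schur multipliers $m_{\varphi_i}$ on $B \rtimes_{\alpha,r} G$ are uniformly bounded and converge pointwise to the identity. Each
\[
m_{\varphi_i}(x) = \sum_{g \in \operatorname{supp}\varphi_i} \varphi_i(g)\, E_g(x)\, u_g
\]
has finite Fourier support with coefficients in $B_1$, and therefore lies in $B_1 \rtimes_{\alpha,r} G$. Passing to the limit and using norm-closedness of $B_1 \rtimes_{\alpha,r} G$ yields $x \in B_1 \rtimes_{\alpha,r} G$, completing the proof.

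The main obstacle I anticipate is the averaging argument: the Rokhlin contractions are drawn from $A$, yet centrality and shifted orthogonality must be verified against elements of the larger algebra $B$. This is precisely the point of the \emph{relative} version of Rokhlin dimension, and without the inclusion-version of the condition the off-diagonal cancellation collapses. A secondary technicality is that for an infinite cyclic subgroup the towers are finite of some chosen period $N$, so $N$ must be taken large compared to $F_0$ to ensure that the cyclic shift $\alpha_h$ produces genuine orthogonality rather than wrap-around between $f_{N+1}^{(l)}$ and $f_1^{(l)}$; this is permitted by the freedom to choose $N$ arbitrarily in Definition \ref{defn:Rokhlin_dim_inclusion}.
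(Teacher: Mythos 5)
First, be aware that the paper does not prove this statement at all: it is quoted verbatim as \cite[Theorem 1.1]{AHS23} and used as a black box, so there is no internal argument of the paper to measure yours against. Judged on its own terms, your architecture is the right one and is the one used in the cited source and its precursors (e.g.\ \cite{Suzuki2020, BEFRPR21}): set \(B_1 := C \cap B\); get the easy inclusion \(B_1 \rtimes_{\alpha,r} G \subset C\) and the \(G\)-invariance of \(B_1\) from \(u_g \in M(A \rtimes_{\alpha,r} G)\); prove that every Fourier coefficient \(E_g(x)\) of \(x \in C\) lies in \(B_1\) by averaging over Rokhlin towers drawn from \(A\); and reassemble \(x\) from finitely supported Herz--Schur multipliers using the AP. Your closing remarks --- that the towers must sit in \(A\) yet be controlled against elements of \(B\) (which is exactly what the \emph{relative} hypothesis supplies), and that \(N\) must be large relative to the Fourier support to avoid wrap-around in the infinite cyclic case --- identify the correct points of care. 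One item you elide: for \(u_g \in M(C)\) and \(x u_g^* \in C\) you need the inclusion \(A \rtimes_{\alpha,r} G \subset C\) to be non-degenerate, which traces back to \(A\) containing an approximate unit for \(B\); in every application in this paper the algebras are unital with common unit, so this is harmless, but it should be said.

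The one step that would fail as literally written is the claim that the compression \(\Psi(x) = \sum_{l,k} f_k^{(l)}\, x\, f_k^{(l)}\) ``approximates the diagonal part \(E(x)\) by the approximate-unit and centrality properties.'' By approximate centrality, \(\Psi\) applied to the \(e\)-term gives roughly \(\sum_{l,k} \bigl(f_k^{(l)}\bigr)^2 x_e\), and \(\sum_{l,k} \bigl(f_k^{(l)}\bigr)^2\) is in general \emph{not} close to \(\sum_{l,k} f_k^{(l)} \approx 1\): the \(f_k^{(l)}\) are merely positive contractions, and for Rokhlin dimension \(d \ge 1\) the \(d+1\) towers necessarily overlap, so they cannot be taken to be projections (take all \(f_k^{(l)}\) equal to a small multiple of the unit to see that \(\Psi\) can nearly annihilate the diagonal as well as the off-diagonal). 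The standard repair is to compress by \(\bigl(f_k^{(l)}\bigr)^{1/2}\) instead, so that the diagonal term becomes \(\sum_{l,k} f_k^{(l)} x_e \approx x_e\) while the off-diagonal cancellation survives, since approximate orthogonality and approximate centrality pass to square roots; this bookkeeping is cleanest if one first passes to the exact reformulation of the tower relations in the central sequence algebra \(F(A)\), as in Remark \ref{rmk:rokhlin_relative}(i). With that substitution, and with the errors accumulated over the finitely many \(h \in F_0 \setminus \{e\}\) tracked as you indicate, your argument goes through.
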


\section{A maximal \(C^*\)-subalgebra of $\mathcal Q_2$}\label{sec:cuntz_inclusion}

The main goal of this section is to show that the canonical copy of the Cuntz algebra \(\C\) is maximal in \(\Q\). For this, we first need to prove that \(\M\) is maximal in \(\B\) by invoking some results on partial actions. It is well known that \(\Q\) and \(\mathcal{O}_2\) can be realized as corners of crossed products \cite{BOS18, RS02}. However, some additional work is required to establish that the \( ^*\)-isomorphism from \(\Q\) to its crossed product realization restricts naturally to a \( ^*\)-isomorphism between the canonical copy of \(\mathcal{O}_2\) in \(\Q\) and its corresponding realization.

We use the same notation as in Subsection \ref{SS:Q2basics}.

First, we recall the formulation of $\mathcal{O}_2$ as a corner of a crossed product (see \cite[Section 4.2]{RS02} for more details). Let $\varphi : \F \to \F$ be an endomorphism defined by $\varphi(a) = s_2 a s_2^*$ for all $a \in \F$, $\overset{\longrightarrow}{\F}$ be the inductive limit of the sequence
$$
\F \xrightarrow{\varphi} \F \xrightarrow{\varphi} \F \xrightarrow{\varphi} \cdots,
$$
with inductive limit maps $\varphi^{(n)} : \F \to \overset{\longrightarrow}{\F}$, and $\overline{\varphi}$ be the induced \(^*\)-automorphism on $\overset{\longrightarrow}{\F}$ that satisfies 
\begin{equation} \label{eqn:inductive_limit_relationship}
\varphi^{(n)} \circ \varphi = \overline{\varphi} \circ \varphi^{(n)} 
\end{equation}
for all \(n\). The inverse of \(\overline{\varphi}\) is the map \(\varphi^{(n)}(b) \mapsto \varphi^{(n+1)}(b)\).
 

Let $w$ be the unitary in the multiplier algebra of $\overset{\longrightarrow}{\F} \rtimes_{\overline{\varphi}} \mathbb{Z}$ that implements the action $\overline{\varphi}$ of the crossed product $\overset{\longrightarrow}{\F} \rtimes_{\overline{\varphi}} \mathbb{Z}$, and $p := \varphi^{(1)}(1)$. Set \(t_2 \coloneqq wp \in p(\overset{\longrightarrow}{\F} \rtimes_{\overline{\varphi}} \mathbb{Z})p\). Then $p$ is a full projection in the crossed product $\overset{\longrightarrow}{\F} \rtimes_{\overline{\varphi}} \mathbb{Z}$, $t_2$ is an isometry in \(p(\overset{\longrightarrow}{\F} \rtimes_{\overline{\varphi}} \mathbb{Z})p\), and there is a $^*$-isomorphism \begin{equation}\label{eqn:iso_cuntz}
\phi : \mathcal{O}_2 \rightarrow p(\overset{\longrightarrow}{\F} \rtimes_{\overline{\varphi}} \mathbb{Z})p 
\end{equation}
given by \(\phi(s_2) = t_2 \) and \(\phi(a) = \varphi^{(1)}(a)\) for all \(a \in \F\). In particular, $t_1 \coloneqq \phi(s_1) = w \varphi^{(2)}(s_1 s_2^*)$.

The endomorphism \(\varphi: \F \to \F\) extends to an endomorphism \(\varphi : \BB \to \BB\), which is still denoted by \(\varphi(a)= s_2 a s_2^*\) by abusing notation. Moreover, we define \(\varphi^{(n)}, \overline{\varphi}\), and \(\overset{\longrightarrow}{\BB}\) accordingly.

\begin{prop}\label{prop:cuntz_inclusion}
Let $\overset{\longrightarrow}{\BB}$, $\overset{\longrightarrow}{\F}$, $p$, and $\overline{\varphi}$ be as above. Then 
\(
\phi : \mathcal{O}_2 \to p\big( \overset{\longrightarrow}{\F} \rtimes_{\overline{\varphi}} \mathbb{Z} \big) p
\)
as defined in \eqref{eqn:iso_cuntz} extends to a $^*$-isomorphism
\[
\phi : \Q \to p\big( \overset{\longrightarrow}{\BB} \rtimes_{\overline{\varphi}} \mathbb{Z} \big) p.
\]
\end{prop}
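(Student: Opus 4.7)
The plan is to invoke the universal property of $\Q$. To do this, I need to identify the image of the generator $u$ of $\Q$ (not already determined by the isomorphism $\phi$ on $\C$) as a unitary in the corner $p(\overset{\longrightarrow}{\BB}\rtimes_{\overline{\varphi}} \mathbb{Z})p$, and then verify that this element together with $\phi(s_2)=t_2=wp$ satisfies the defining relations (I) and (II) of $\Q$. Since $u\in\BB$ and the formula $\varphi(a)=s_2 a s_2^*$ makes sense on all of $\BB$, the inductive limit $\overset{\longrightarrow}{\BB}$ is well defined, contains $\overset{\longrightarrow}{\F}$, and the induced automorphism $\overline{\varphi}$ extends compatibly. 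The natural candidate is then
\[
\phi(u) := \varphi^{(1)}(u),
\]
which lies in the corner because $p = \varphi^{(1)}(1)$ is the unit of $\varphi^{(1)}(\BB)$. This candidate is also consistent with the existing map on $\C$, since using $w\,\varphi^{(n)}(x)=\varphi^{(n-1)}(x)\,w$ (a consequence of \eqref{eqn:inductive_limit_relationship}) one finds
\[
\phi(u)\phi(s_2) = \varphi^{(1)}(u)\cdot wp = \varphi^{(1)}(u s_2 s_2^*)\,w = \varphi^{(1)}(s_1 s_2^*)\,w = w\,\varphi^{(2)}(s_1 s_2^*) = t_1.
\]

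Next, I would verify the two relations. For (I),
\[
\phi(s_2)\phi(u) = wp\cdot\varphi^{(1)}(u) = w\,\varphi^{(1)}(u) = \varphi^{(1)}(\varphi(u))\,w = \varphi^{(1)}(s_2 u s_2^*)\,w,
\]
while
\[
\phi(u)^2\phi(s_2) = \varphi^{(1)}(u^2)\cdot wp = \varphi^{(1)}(u^2)\,\varphi^{(1)}(s_2 s_2^*)\,w = \varphi^{(1)}(u^2 s_2 s_2^*)\,w,
\]
and the two sides coincide by the identity $s_2 u s_2^* = u^2 s_2 s_2^*$ obtained from (I) in $\Q$. For (II),
\[
\phi(s_2)\phi(s_2)^* = w p w^* = \varphi^{(1)}(s_2 s_2^*), \quad \phi(u)\phi(s_2)\phi(s_2)^*\phi(u)^{-1} = \varphi^{(1)}(u s_2 s_2^* u^{-1}),
\]
whose sum is $\varphi^{(1)}(1) = p$, the unit of the corner. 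The universal property of $\Q$ then yields a $^*$-homomorphism $\phi:\Q \to p(\overset{\longrightarrow}{\BB}\rtimes_{\overline{\varphi}} \mathbb{Z})p$ extending the given isomorphism on $\C$.

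Finally I would establish that $\phi$ is a bijection. Injectivity is immediate because $\Q$ is simple (a Kirchberg algebra by Remark \ref{rmk:form_adic}(v)), and $\phi$ is manifestly nonzero. For surjectivity, $\phi(\Q)$ contains the subalgebra $\phi(\C)=p(\overset{\longrightarrow}{\F}\rtimes_{\overline{\varphi}}\mathbb{Z})p$ together with the unitary $\varphi^{(1)}(u)$; since $\BB=C^*(\F,u)$, this image contains all of $\varphi^{(1)}(\BB)$ in addition to $t_2$. The same generation argument used to identify $\C\cong p(\overset{\longrightarrow}{\F}\rtimes_{\overline{\varphi}}\mathbb{Z})p$ (now applied with $\BB$ in place of $\F$) then shows that $\varphi^{(1)}(\BB)$ and $t_2$ already generate the entire corner $p(\overset{\longrightarrow}{\BB}\rtimes_{\overline{\varphi}}\mathbb{Z})p$.

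I do not expect any single step to be deep, but the principal source of friction will be the bookkeeping in the verification of relation (I): tracking how $w$ commutes past the various $\varphi^{(n)}(\cdot)$ and recognizing that the correct matching comes from $s_2 u s_2^* = u^2 s_2 s_2^*$ rather than the raw form $s_2 u = u^2 s_2$. A secondary preliminary point to dispatch cleanly is that $\varphi$ extends from $\F$ to $\BB$ as a well-defined injective $^*$-endomorphism, so that the embedding $\overset{\longrightarrow}{\F}\hookrightarrow\overset{\longrightarrow}{\BB}$ intertwines the $\overline{\varphi}$'s and induces a faithful embedding of corners into which $\phi|_{\C}$ fits.
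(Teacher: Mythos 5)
Your proposal is correct and follows essentially the same route as the paper: set $\phi(u)=\varphi^{(1)}(u)$, verify the defining relations of $\Q$ in the corner (the paper checks (I) and the compatibility $\overline{u}t_2=t_1$, leaving (II) implicit in the already-established Cuntz relations), invoke the universal property, and obtain surjectivity from the fact that $\varphi^{(1)}(\BB)$ and $t_2$ generate $p\big(\overset{\longrightarrow}{\BB}\rtimes_{\overline{\varphi}}\mathbb{Z}\big)p$. Your explicit verification of relation (II) and the appeal to simplicity of $\Q$ for injectivity only make the argument slightly more detailed than the paper's.
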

\begin{proof}
Note that \(\Q = C^* (\BB , s_2)\) and \(p(\overset{\longrightarrow}{\BB} \rtimes_{\overline{\varphi}} \mathbb{Z})p= C^* (\varphi^{(1)} (\BB) , t_2)\) by 
construction (see \cite[Proposition 3.3]{Stacey1993} for more details). Let \(\overline{u}= \varphi^{(1)} (u)\). Then 
\begin{align*}
    t_2 \overline{u}&= w\varphi^{(1)} (1)\varphi^{(1)} (u)= w \varphi^{(1)} (u) = \overline{\varphi}(\varphi^{(1)} (u))w \\
    &= \varphi^{(1)} (\varphi(u)) w = \varphi^{(1)} (s_2 u s_2^*) w\\
    &= \varphi^{(1)} (u^2 s_2 s_2^*)w = \varphi^{(1)} (u^2) \varphi^{(1)}(\varphi(1))w\\
   &=\varphi^{(1)} (u^2) \overline{\varphi}(\varphi^{(1)}(1))w 
   = \varphi^{(1)} (u^2) wp = \overline{u}^2 t_2.
\end{align*}
Similarly, it follows from \(p = \varphi^{(2)} (s_2 s_2^*)\), \(\varphi^{(2)} \circ \varphi = \varphi^{(1)} \), and \eqref{eqn:inductive_limit_relationship} that 
\begin{align*}
        \overline{u}t_2
        & =\overline{u}wp= w\overline{\varphi}^{-1}(\overline{u})p= w \varphi^{(2)} (u)p \\
        &= w \varphi^{(2} (u)\varphi^{(2)} (s_2 s_2^*)= w \varphi^{(2)} (s_1 s_2^*) = t_1.
\end{align*}
Hence, by the universal property of \(\Q\), we have an injective 
\(^*\)-homomorphism \(\phi : \Q \to p(\overset{\longrightarrow}{\BB} \rtimes_{\overline{\varphi}} \mathbb{Z})p\)
such that $\phi(s_2)=t_2$ and $\phi(u)=\overline{u}$.

Furthermore, \(\phi\) is surjective. To see this, let \(u^m s_{2^n} s_{2^n} u^k \in \BB\). Then we have 
\begin{align*}
\varphi^{(1)} (u^m s_{2^n} s_{2^n}^* u^k) 
&= \varphi^{(1)}(u^m)\varphi^{(1)} ( s_{2^n} s_{2^n}^*) \varphi^{(1)} (u^k) \\
 &= \phi(u^m) \phi(s_{2^n} s_{2^n}^*) \phi(u^k)\\
& = \phi\big( u^m s_{2^n} s_{2^n}^* u^k\big).
\end{align*}
Therefore $\varphi$ is a \(^*\)-isomorphism.
\end{proof}

Next, we recall \( \Q \) as a partial crossed product (see \cite[Example 4.5]{ES16} for more details) below: Let \( \langle 2 \rangle \coloneqq \{ 2^k : k \in \mathbb{Z} \} \) be the multiplicative subgroup of \( \mathbb{Q}^\times \), and let \( G \coloneqq \mathbb{Z}\left[\tfrac{1}{2}\right] \rtimes \langle 2 \rangle \) be the semidirect product with group operation
\[
(m_1, 2^{k_1})(m_2, 2^{k_2}) = (m_1 + 2^{k_1} m_2,\, 2^{k_1 + k_2})
\]
and inverse
\[
(m, 2^k)^{-1} = (-2^{-k} m,\, 2^{-k}).
\]
Let $\Delta\subset G$ be given by 
\begin{align*}
\Delta
&=\left\{(m_1, 2^{k_1})(m_2, 2^{k_2})^{-1}\in G: k_i \in \mathbb{N}, 0 \leq m_i < 2^{k_i}, i=1,2\right\}\\
&= \left\{\left(m_1 - m_2 \cdot 2^{k_1 - k_2},\, 2^{k_1 - k_2} \right)\in G: k_i \in \mathbb{N}, 0 \leq m_i < 2^{k_i}, i=1,2\right\}.
\end{align*}
Let $X \coloneqq \mathbf{Z}_2$, the $2$-adic integers, and
\begin{equation*}
X_{(m, 2^k)} \coloneqq
\begin{cases}
\emptyset & \text{if } (m, 2^k) \notin \Delta, \\
\z & \text{if } (m, 2^k) \in \Delta \text{ and } k \le 0, \\
2^k \z + m & \text{if } (m, 2^k) \in \Delta \text{ and } k > 0.
\end{cases}
\end{equation*}
For \( g := (m, 2^k) \in \Delta \), define
\begin{equation}
\eta_g : X_{g^{-1}} \to X_g,\quad \eta_g(x) = 2^k x + m.
\end{equation}
Then the family of partial homeomorphisms \( \{\eta_g\}_{g \in G} \) defines a partial action \( \eta : G \curvearrowright \z \), and there exists a \(^*\)-isomorphism
\begin{equation}\label{eqn:isomorphism_Q_partial}
\phi : \Q \to C(\z) \rtimes_{\eta} G,
\quad
\phi(s_2) = \mathbf{1}_{2 \mathbf{Z}_2} \, \lambda_{(0, 2)},
\quad
\phi(u) = \lambda_{(1, 1)}, 
\end{equation}
where \(\mathbf{1}_{m + 2^k \mathbf{Z}_2}\) is the indicator function of the set \(m + 2^k \mathbf{Z}_2\). 

\begin{lem}\label{lem:partial_car}
There is a \(^*\)-isomorphism \(\xi : \B \to C(\z) \rtimes_{\alpha} \mathbb{Z}\) which restricts to a \(^*\)-isomorphism \(\xi : \M \to C(\z) \rtimes_{\beta} \mathbb{Z}\), 
where \(\alpha : \mathbb{Z} \curvearrowright \z\) is a group action induced from $\eta$ and  \(\beta \) is a partial subaction of $\alpha$.
\end{lem}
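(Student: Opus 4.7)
The plan is to leverage the isomorphism $\phi: \Q \to C(\z) \rtimes_\eta G$ from \eqref{eqn:isomorphism_Q_partial} and identify both $\B$ and $\M$ as (partial) crossed products arising from a subgroup $\mathbb{Z} \subset G$. First, I would pinpoint this $\mathbb{Z}$ inside $G$ as the subgroup $H := \{(m,1) : m \in \mathbb{Z}\}$, generated by $(1,1)$. A direct check using the characterization of $\Delta$ shows that every element $(m,1)$ lies in $\Delta$; and because the ``level'' coordinate is $k = 0$, the domain $X_{(m,1)}$ equals all of $\z$. Consequently, the restriction of $\eta$ to $H$ is a genuine global action $\alpha: \mathbb{Z} \curvearrowright \z$ given by $\alpha_m(x) = x + m$.

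Next, I would restrict $\phi$ to $\B$. Using $\B = C^*(\D, u)$ from Remark \ref{rmk:form_adic}, together with $\phi(\D) = C(\z)$ (via the identification of the spectrum of $\D$ with $\z$) and $\phi(u) = \lambda_{(1,1)}$, one sees that $\phi(\B)$ coincides with the $C^*$-subalgebra of $C(\z) \rtimes_\eta G$ generated by $C(\z)$ and $\lambda_{(1,1)}$. By amenability of $\mathbb{Z}$, this subalgebra is canonically isomorphic to $C(\z) \rtimes_\alpha \mathbb{Z}$, so taking $\xi := \phi|_\B$ produces the first $^*$-isomorphism.

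For the restriction to $\M$, I would use the identification of $\M$ with the UHF subalgebra $\F$ inside $\B$, spanned by $u^m s_{2^n} s_{2^n}^* u^{-k}$ with $0 \le m, k \le 2^n - 1$. A direct application of $\xi$ gives $\xi(u^m s_{2^n} s_{2^n}^* u^{-k}) = \mathbf{1}_{m + 2^n \z}\,\lambda_{m-k}$. To match this with a partial crossed product, I would define the partial subaction $\beta$ of $\alpha$ with open domains
\[
X_l = \z \setminus \{0,1,\ldots,l-1\}\ (l > 0),\quad X_0 = \z,\quad X_l = \z \setminus \{l,\ldots,-1\}\ (l < 0),
\]
and component maps $\beta_l := \alpha_l|_{X_{-l}}$. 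A routine check confirms that $\alpha_l$ restricts to a homeomorphism $X_{-l} \to X_l$ and that $\beta$ satisfies the partial action axioms.

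The main obstacle is the slicewise identification $\xi(\M) = C(\z) \rtimes_\beta \mathbb{Z}$ inside $C(\z) \rtimes_\alpha \mathbb{Z}$. For fixed $l \in \mathbb{Z}$, the $l$-th spectral component of $\xi(\M)$ is the closed linear span of $\mathbf{1}_{m + 2^n \z}$ over pairs $(m, n)$ with $0 \le m, m - l \le 2^n - 1$; a clean 2-adic computation shows that the union of these compact-open subsets equals $X_l$, the excluded points being the finitely many integers trapped outside the allowed range at every level $n$. Hence this closed span is $C_0(X_l)$, matching the $l$-th component of the partial crossed product. Amenability of $\mathbb{Z}$ finally guarantees injectivity of the natural map $C(\z) \rtimes_\beta \mathbb{Z} \to C(\z) \rtimes_\alpha \mathbb{Z}$, completing the proof.
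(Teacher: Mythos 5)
Your proposal is correct and follows essentially the same route as the paper: restrict $\phi$ from \eqref{eqn:isomorphism_Q_partial} to the copy of $\mathbb{Z}$ sitting as $\{(m,1)\}\subset G$ to get the global action $\alpha_m(x)=x+m$, compute $\phi(u^m s_{2^n}s_{2^n}^*u^{-k})=\mathbf{1}_{m+2^n\z}\lambda_{(m-k,1)}$, define the partial subaction $\beta$ on the domains $U_l=\z\setminus\{0,\dots,l-1\}$ (resp.\ $\z\setminus\{l,\dots,-1\}$), and match the $l$-th spectral components, finishing with the identification $\BB\cong\B$, $\F\cong\M$. The only (harmless) divergences are that you invoke generation by $C(\z)$ and $\lambda_{(1,1)}$ plus amenability where the paper computes images of spanning elements directly, and you spell out the density argument for the reverse inclusion $C_0(U_l)\subset\overline{\operatorname{span}}\{\mathbf{1}_{m+2^n\z}\}$ a bit more explicitly than the paper does.
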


\begin{proof}
Let \(\phi\) be the isomorphism defined in \eqref{eqn:isomorphism_Q_partial}. Then one has
\begin{equation*}
\phi(s_{2^n} s_{2^n}^*) 
= \phi(s_{2^n}) \, \phi(s_{2^n})^* = \mathbf{1}_{2^n \z} \lambda_{(0, 2^n)} \cdot \lambda_{(0, 2^{-n})} \mathbf{1}_{2^n \z} = \mathbf{1}_{2^n \z}\lambda_{(0,1)}, 
\end{equation*}
and
\begin{equation*}
\phi(u^m s_{2^n} s_{2^n}^* u^{-m}) =\lambda_{(m,1)}\mathbf{1}_{2^n \z}\lambda_{(0,1)} \lambda_{(-m,1)}= \mathbf{1}_{m+2^n \z}\lambda_{(0,1)}.
\end{equation*}
Thus
\begin{align}
\nonumber
\phi(u^m s_{2^n} s_{2^n}^* u^{k})&= \phi(u^m s_{2^n} s_{2^n}^* u^{-m})\phi(u^{k+m}) = \mathbf{1}_{m+2^n \z}\lambda_{(0,1)}\lambda_{(k+m, \, 1)}\\
\label{eqn:image_of_bunce}
&= \mathbf{1}_{m+2^n \z } \lambda_{(k+m, \, 1)}.  
\end{align}
Let \( \alpha : \mathbb{Z} \curvearrowright \z \) be the restriction of \( \eta \) to \( \mathbb{Z} \times 1 \). Hence
\[
\alpha_m:\z\to \z, \ x\mapsto x+m\quad \text{for all } x\in \z.
\]
Then clearly \( \alpha \) defines a group action. Since \(C(\z) 
= \overline{\operatorname{span}} \{\mathbf{1}_{m+2^n \z }: m\in \mathbb{Z},n\in \mathbb{N}\}\), from \eqref{eqn:defn_bunce} and \eqref{eqn:image_of_bunce} it follows that \(\phi(\BB) =C(\z)\rtimes_{\alpha}\mathbb{Z}\). 

Let
\begin{equation*}
U_{m} \coloneqq
\begin{cases}
\z & \text{if } m = 0, \\
\z \setminus \{0, 1, \ldots, m-1\} & \text{if } m > 0, \\
\z \setminus \{m, m+1, \ldots, -1\} & \text{if } m < 0.
\end{cases}
\end{equation*}
For \(m\in \mathbb{Z}\), define 
\begin{equation*}
\beta_m : U_{-m} \to U_{m}, \quad \beta_m (x)= x+m. 
\end{equation*}
Then \(\beta_m = \alpha_{m}|_{U_{-m}}\) and  $\beta : \mathbb{Z} \curvearrowright \z$ is a partial subaction of \(\alpha\).

Observe that 
\begin{align}
\label{E:Um}
U_m = \bigcup\limits_{n=1}^{\infty}\bigcup\limits_{k=0}^{2^n -1} ( k +m+ 2^n \z) =  \bigcup_{n=1}^\infty \bigcup_{\substack{m \le \ell \le 2^n -1}} (\ell + 2^n \z) 
\end{align}
for all $m\in \mathbb{Z}$.
Let \(n \in \mathbb{N}\) and suppose \(0 \leq m, k \leq 2^n - 1\). Set \(r := m - k\); then clearly \(r \leq m \leq 2^n - 1\). 
By \eqref{E:Um}, this implies \(\mathbf{1}_{m + 2^n \z} \in C_0(U_r)\). Hence, we have
\[
\phi\bigl(u^m s_{2^n} s_{2^n}^* u^{-k}\bigr) = \mathbf{1}_{m + 2^n \z} \, \lambda_{(m - k, 1)} \in C(\z) \rtimes_{\beta} \mathbb{Z}.
\]
Therefore, from \eqref{eqn:defn_car}, we have \(\phi(\F) = C(\z) \rtimes_{\beta} \mathbb{Z}\).




Let $\psi$ be a $^*$-isomorphism that identifies $\BB$ with the Bunce–Deddens algebra $M_{2^\infty}(C(\mathbb{T}))$, whose restriction to $\F$ yields an isomorphism onto the CAR algebra $\M$ (see \cite{BOS18,VY25} for example). Then \(\xi \coloneqq \phi \circ \psi^{-1}\) is the desired \(^*\)-isomorphism.
\end{proof}

One consequence of Lemma \ref{lem:partial_car} yields a key ingredient for the main result of this section.

\begin{theo}\label{thm:maximality_of_car}
\(\M\) is a maximal \(C^*\)-subalgebra of \(\B\). 
\end{theo}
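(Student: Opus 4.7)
The plan is to use Lemma \ref{lem:partial_car} to transfer the question to proving that \(C(\z) \rtimes_\beta \mathbb{Z}\) is maximal in \(C(\z) \rtimes_\alpha \mathbb{Z}\), and then to exploit the dual gauge \(\mathbb{T}\)-action \(\hat\alpha\) on the full crossed product (defined by \(\hat\alpha_t(f \lambda_m) = t^m f \lambda_m\)), which leaves the partial crossed product invariant. Let \(C\) be an arbitrary intermediate \(C^*\)-subalgebra; I will show that \(C\) must coincide with one of the two endpoints.

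The first step, which I expect to be the main obstacle, is to show that \(C\) is automatically \(\hat\alpha\)-invariant. For each \(n \in \mathbb{N}\), I would form the unitary
\[
v_n := \sum_{j=0}^{2^n - 1} e^{2\pi i j / 2^n}\, \mathbf{1}_{j + 2^n \z} \in C(\z) \subseteq C(\z) \rtimes_\beta \mathbb{Z} \subseteq C.
\]
A direct computation using \(\alpha_1(\mathbf{1}_{j + 2^n \z}) = \mathbf{1}_{(j+1) + 2^n \z}\) yields \(\alpha_1(v_n) = e^{-2\pi i / 2^n}\, v_n\), from which \(v_n \lambda_1 v_n^* = e^{2\pi i / 2^n} \lambda_1\). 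Since \(v_n\) commutes with \(C(\z)\), the inner automorphism \(\mathrm{Ad}(v_n)\) coincides with \(\hat\alpha_{t_n}\), where \(t_n := e^{2\pi i / 2^n}\), on the generators of \(C(\z) \rtimes_\alpha \mathbb{Z}\), and hence on the whole crossed product. As \(v_n \in C\), the subalgebra \(C\) is preserved by \(\hat\alpha_{t_n}\) for every \(n\). Since \(\{t_n\}_{n \in \mathbb{N}}\) generate a dense subgroup of \(\mathbb{T}\) and \(\hat\alpha\) is strongly continuous, \(C\) is \(\hat\alpha\)-invariant. The role of \(\M\) here is precisely to supply the projections from \(\D\) needed to assemble these implementing unitaries.

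The second step is to classify \(\hat\alpha\)-invariant intermediate \(C^*\)-algebras via spectral decomposition. Each such \(C\) decomposes as the norm-closure of \(\bigoplus_{m \in \mathbb{Z}} C_m\), where \(C_m\) is a \(C(\z)\)-sub-bimodule of the spectral subspace \(C(\z) \lambda_m\); hence \(C_m = C_0(V_m) \lambda_m\) for an open set \(V_m \supseteq U_m\). Writing \(F_m := \z \setminus V_m \subseteq \{0, 1, \ldots, m-1\}\) for \(m \geq 1\), closure under multiplication and adjoint translate to the constraints
\[
F_{m+n} \subseteq F_m \cup (F_n + m), \qquad F_{-m} = F_m - m.
\]
An induction driven by the single parameter \(F_1 \in \{\emptyset, \{0\}\}\) then shows that either \(F_m = \emptyset\) for all \(m\) (giving the full crossed product) or \(F_m = \{0, 1, \ldots, m-1\}\) for all \(m \geq 1\) (giving the partial crossed product). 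The crucial inductive check, which rules out intermediate choices, is the case \((m+1, -m)\): attempting \(F_{m+1} = \{1, \ldots, m\}\) would force \(F_1 \subseteq F_{m+1} \cup (F_{-m} + m+1) = \{1, \ldots, m\}\), contradicting \(F_1 = \{0\}\). Combining the two steps completes the argument.
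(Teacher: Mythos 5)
Your overall strategy is sound and genuinely different from the paper's in one important respect. The paper also reduces via Lemma \ref{lem:partial_car} to the inclusion \(C(\z)\rtimes_\beta\mathbb{Z}\subset C(\z)\rtimes_\alpha\mathbb{Z}\), but then it invokes the freeness of \(\alpha\) together with the Galois-type correspondence of \cite[Corollary 5.6]{BEFRPR21} (equivalently \cite[Theorem C]{KD24}) to know that every intermediate algebra is a partial crossed product \(C(\z)\rtimes_\gamma\mathbb{Z}\), and then runs a combinatorial argument on the domains \(V_m\supseteq U_m\). Your first step replaces that citation with a self-contained argument: the local unitaries \(v_n\in C(\z)\subset C\) implement \(\hat\alpha_{t_n}\) (your computation checks out, and the \(2^n\)-th roots of unity are dense in \(\mathbb{T}\)), so every intermediate \(C\) is gauge-invariant, and the Fej\'er/spectral-subspace decomposition then yields \(C_m=C_0(V_m)\lambda_m\) with the constraints \(F_{m+n}\subseteq F_m\cup(F_n+m)\) and \(F_{-m}=F_m-m\). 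This is more elementary and buys independence from the heavy machinery; the paper's route buys brevity and the a priori knowledge that \(\gamma\) is a partial subaction.

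There is, however, a gap in your second step: the single check you call crucial, namely \(F_1\subseteq F_{m+1}\cup(F_{-m}+m+1)\), only forces \(0\in F_{m+1}\) (given \(F_1=\{0\}\)); it does not rule out intermediate configurations with a ``hole'' at some \(j\ge 1\). For instance, \(F_1=\{0\}\), \(F_2=\{0\}\) passes your check, so the induction as described does not close. The missing ingredient is a second application of the same relation to translate an arbitrary hole to position \(0\): if \(j\notin F_m\) with \(1\le j\le m-1\), then
\[
F_{m-j}=F_{(-j)+m}\subseteq F_{-j}\cup(F_m-j)=(F_j-j)\cup(F_m-j),
\]
and since \(F_j-j\subseteq\{-j,\dots,-1\}\) and \(j\notin F_m\), one gets \(0\notin F_{m-j}\); your check applied to \(m-j\) then gives \(F_1=\emptyset\), a contradiction. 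This two-step chain (shift the hole to \(0\), then collapse \(F_1\)) is exactly parallel to the two applications of the partial-action axiom in the paper's own proof, so the fix is routine, but as written your classification of the \((F_m)\) is incomplete.
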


\begin{proof}
By Lemma \ref{lem:partial_car}, it suffices to show that \( C(\z) \rtimes_{\beta} \mathbb{Z} \) is a maximal \(C^*\)-subalgebra of \( C(\z) \rtimes_{\alpha} \mathbb{Z} \). We keep the same notation as in that lemma.

Since \(\alpha_m(x) = x + m = x\) implies \(m = 0\), the action \(\alpha\) is free. Hence, by \cite[Corollary 5.6]{BEFRPR21} and its equivalent formulation in \cite[Theorem C]{KD24}, every intermediate \(C^*\)-subalgebra of the inclusion \( C(\z) \subset C(\z) \rtimes_{\alpha} \mathbb{Z} \) is a crossed product associated with a partial subaction of \(\alpha\).

Consequently, every intermediate \(C^*\)-subalgebra between \( C(\z) \rtimes_{\beta} \mathbb{Z} \) and \( C(\z) \rtimes_{\alpha} \mathbb{Z} \) is also associated with a partial subaction of \(\alpha\).

Let $C(\z) \rtimes_{\gamma} \mathbb{Z} = \overline{\operatorname{span}} \{ a_m \delta_m : a_m \in V_m \}$ be an intermediate $C^*$-subalgebra between $C(\z) \rtimes_{\beta} \mathbb{Z}$ and $C(\z) \rtimes_{\alpha} \mathbb{Z}$, where $\gamma_m : V_{-m} \to V_m$ defines a partial subaction of $\alpha$. Then \(U_m \subset V_m\) for each \(m\in \mathbb{Z}\). Observe that if \(V_{-1} = \z\), then \(V_{1}= \z\) and \(\gamma\) coincides with \(\alpha\). 
In what follows, our goal is to show that if there is $m\in \mathbb Z$ such that $U_m\subsetneq V_m$, then $V_{-1}=\z$.

Suppose for some $s \geq 1$, there is a $-s \leq t \leq -1$ such that $t \in V_{-s}$. Since $t \leq -1$, we have $t + 1 \leq 0$, and so $t \in U_{t+1} \subset V_{t+1}$. Hence, $t \in V_{t+1} \cap V_{-s}$. By the definition of a partial action,
\begin{equation}\label{eqn:partial_defn_1}
\gamma_{t+1}^{-1}(V_{t+1} \cap V_{-s}) \subset V_{-(t+1 + s)}. 
\end{equation}
Since $t \in V_{t+1} \cap V_{-s}$, the inclusion \eqref{eqn:partial_defn_1} implies $
-1 = \gamma_{t+1}^{-1}(t) \in V_{-(t+1 + s)}
$. Applying $\gamma_{t+1+s}$, we obtain
\begin{equation}\label{eqn:gamma_calc_1}
\gamma_{t+s+1}(-1) = t + s \in V_{t+s+1}.
\end{equation}
Since $-s \leq t$, we have $t + s \geq 0$, and therefore $t + s \in U_{t+s} \subset V_{t+s}$. Thus, we get from \eqref{eqn:gamma_calc_1} that
$$
t + s \in V_{t+s+1} \cap V_{t+s}.
$$
By the partial action definition again,
$$
\gamma_{t+1+s}^{-1}(V_{t+1+s} \cap V_{t+s}) \subset V_{-1}.
$$
So $-1 = \gamma_{t+1+s}^{-1}(t + s) \in V_{-1}$. 
Hence $\{-1\}\cup U_{-1}=\z\subset V_{-1}$, implying $V_{-1}=\z$.
Therefore, $\gamma$ is the global action $\alpha$.

Suppose for some $s \geq 1$, there exists $0 \leq t \leq s - 1$ such that $t \in V_s$. Then $t - s \in V_{-s}$. Since $-s \leq t - s \leq -1$, the previous case applies. Hence, $\gamma$ must be the global action $\alpha$. 

Therefore, there are no proper intermediate $C^*$-subalgebra between $C(\z) \rtimes_{\beta} \mathbb{Z}$ and $C(\z) \rtimes_{\alpha} \mathbb{Z}$, as desired.
\end{proof}

\begin{lem}[{\cite[Lemma 4.5]{ER24}; \cite[Lemma 5.2]{Suzuki2020}}]\label{lem:inclusion_correspondence}
Let \( A \subset B \) be a nondegenerate inclusion of \( C^* \)-algebras, and let \( p \in A \) be a full projection. Then the map \( D \mapsto pDp \) defines a bijection between the intermediate \( C^* \)-subalgebras of \( A \subset B \) and those of \( pAp \subset pBp \).

\end{lem}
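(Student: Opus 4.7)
The plan is to construct an explicit inverse to $D \mapsto pDp$ and verify the two composition identities. Given an intermediate $C^*$-subalgebra $pAp \subset E \subset pBp$, the natural candidate for the inverse is
\[
\Psi(E) := \overline{\operatorname{span}}\{a e a' : a, a' \in A,\, e \in E\} \subset B.
\]
The proof breaks into three verifications: that $\Psi(E)$ is an intermediate $C^*$-subalgebra of $A \subset B$, that $p\Psi(E)p = E$, and that $\Psi(pDp) = D$.

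For the first point, I would check self-adjointness, norm-closedness, and closure under multiplication. Self-adjointness and norm-closedness are built into the definition, while multiplicative closure is where fullness of $p$ enters: using $E = pEp$ and $pAp \subset E$, the product $(AEA)(AEA)$ collapses through $E \cdot (pAp) \cdot E \subset E$ back into $AEA$. Fullness also supplies $A = \overline{ApA} \subset \overline{A(pAp)A} \subset \Psi(E)$, so $A \subset \Psi(E) \subset B$.

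The identity $p\Psi(E)p = E$ is the easy side: the $\subset$ direction uses $(pAp)\, E\, (pAp) \subset E$, and the $\supset$ direction uses $E = pEp$ together with $p \in A$.

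The identity $\Psi(pDp) = D$ is the main obstacle and relies on both hypotheses. The inclusion $\Psi(pDp) \subset D$ is immediate from $A \subset D$. For the reverse, given $d \in D$, nondegeneracy of $A \subset B$ supplies an approximate unit $(e_\lambda)$ of $A$ with $e_\lambda d e_\mu \to d$. Fullness of $p$ then lets me approximate each $e_\lambda$ by finite sums of the form $\sum_i a_i p b_i \in ApA$, so $e_\lambda d e_\mu$ is in turn approximated by elements of the form $\sum_{i,j} a_i (p b_i d a_j p) b_j \in A \cdot (pDp) \cdot A \subset \Psi(pDp)$. Passing to the limit yields $d \in \Psi(pDp)$. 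The subtlety lies precisely in this combined use of fullness (to manufacture the $p$'s that squeeze elements of $D$ into $pDp$) and nondegeneracy (to recover $d$ from the sandwich by $A$).
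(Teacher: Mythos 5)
The paper does not prove this lemma at all; it simply cites \cite[Lemma 4.5]{ER24} and \cite[Lemma 5.2]{Suzuki2020}. Your argument is correct and is essentially the standard proof given in those references: the explicit inverse \(E \mapsto \overline{\operatorname{span}}(AEA)\), with fullness of \(p\) used to recover \(A\) inside \(\Psi(E)\) and to approximate an approximate unit by elements of \(ApA\), and nondegeneracy used to recover \(d\) from \(e_\lambda d e_\mu\); all three verifications go through as you describe.
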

\begin{cor}\label{cor:maximality_of_stablized_car}
\(\K \otimes \M\) is a maximal \(C^*\)-subalgebra of \(\K \otimes \B\). 
\end{cor}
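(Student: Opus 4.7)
The plan is to reduce the statement directly to Theorem \ref{thm:maximality_of_car} by means of the corner-correspondence provided by Lemma \ref{lem:inclusion_correspondence}. Concretely, I would apply that lemma to the inclusion
\[
\mathcal{K} \otimes \M \;\subset\; \mathcal{K} \otimes \B
\]
with the projection $p \coloneqq e_{11} \otimes 1$, where $e_{11} \in \mathcal{K}$ is any rank-one projection and $1$ denotes the unit of $\M$ (which is also the unit of $\B$).

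First, I would check the hypotheses of Lemma \ref{lem:inclusion_correspondence}. The inclusion is nondegenerate because, with $\M$ unital, any approximate unit for $\mathcal{K}$ tensored with $1$ yields an approximate unit for $\mathcal{K}\otimes \M$ that is simultaneously an approximate unit for $\mathcal{K}\otimes \B$. The projection $p$ lies in $\mathcal{K}\otimes \M$, and its fullness in both algebras follows from the fact that $e_{11}$ is a full projection in $\mathcal{K}$ (the matrix unit identities $e_{i1} e_{11} e_{1j} = e_{ij}$ show that the ideal of $\mathcal{K}$ generated by $e_{11}$ equals $\mathcal{K}$) together with the fact that $1$ is full in the unital algebras $\M$ and $\B$.

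Next, I would identify the associated corners. A routine computation gives
\[
p(\mathcal{K}\otimes \M)p = (e_{11}\mathcal{K}e_{11}) \otimes \M \cong \M,
\qquad
p(\mathcal{K}\otimes \B)p = (e_{11}\mathcal{K}e_{11}) \otimes \B \cong \B,
\]
and these identifications intertwine the inclusions, so the corner inclusion $p(\mathcal{K}\otimes \M)p \subset p(\mathcal{K}\otimes \B)p$ is \(^*\)-isomorphic to $\M \subset \B$.

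Finally, Lemma \ref{lem:inclusion_correspondence} gives a bijection $D \mapsto pDp$ between intermediate $C^*$-subalgebras of $\mathcal{K}\otimes \M \subset \mathcal{K}\otimes \B$ and intermediate $C^*$-subalgebras of $\M \subset \B$. By Theorem \ref{thm:maximality_of_car}, the latter collection has only two elements, so the former does as well; that is, $\mathcal{K}\otimes \M$ is maximal in $\mathcal{K}\otimes \B$. There is no real obstacle here — the proof is essentially a bookkeeping exercise, and the only point requiring a brief verification is the fullness of $p$ in both algebras, which is immediate from the structure of $\mathcal{K}$.
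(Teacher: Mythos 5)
Your argument is correct and coincides with the paper's own proof: both take $p = e_{11}\otimes 1$, identify the corners $p(\K\otimes\M)p \cong \M$ and $p(\K\otimes\B)p\cong\B$, and conclude via Lemma \ref{lem:inclusion_correspondence} and Theorem \ref{thm:maximality_of_car}. Your version merely spells out the nondegeneracy and fullness checks that the paper leaves implicit.
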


\begin{proof}
Let \(p \coloneqq e_{11} \otimes 1\). Then \(p\) is a full projection in \(\K \otimes \M\). 
Moreover, there exists a \(^*\)-isomorphism 
\(p(\K \otimes \B)p \cong \B\), which restricts to a \(^*\)-isomorphism \(p(\K \otimes \M)p \cong \M\). The claim then follows from Lemma \ref{lem:inclusion_correspondence} and Theorem \ref{thm:maximality_of_car}.
\end{proof}

\begin{lem}\label{lem:Roklin_iso}
Let \(\phi : A \to A\) and \(\psi : B \to B\) be \(^*\)-endomorphisms. Suppose there exists a \(^*\)-isomorphism \(\theta : A \to B\) such that \(\theta \circ \phi = \psi \circ \theta\). If \(\phi\) has the Rokhlin property, then so does \(\psi\).
\end{lem}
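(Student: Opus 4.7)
The plan is to transfer Rokhlin towers from $A$ to $B$ through the $^*$-isomorphism $\theta$. The only subtlety specific to the endomorphism version of the Rokhlin property (Definition \ref{defn:Rokhlin_endo}) is that the test set $F$ is drawn from $\psi^{p}(B)$ rather than all of $B$, so I first need to show that applying $\theta^{-1}$ sends such $F$ into $\phi^{p}(A)$ where the Rokhlin property of $\phi$ can be invoked.

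First I would check, by an easy induction on $p$ using $\theta\circ\phi=\psi\circ\theta$, that
\[
\theta\circ\phi^{p}=\psi^{p}\circ\theta,
\]
and therefore $\psi^{p}(B)=\psi^{p}(\theta(A))=\theta(\phi^{p}(A))$. Consequently, for any finite subset $F\subset\psi^{p}(B)$, the set $\theta^{-1}(F)$ is a finite subset of $\phi^{p}(A)$.

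Next, given such $F$, a positive integer $p$, and $\varepsilon>0$, I would apply the Rokhlin property of $\phi$ to the data $(p,\theta^{-1}(F),\varepsilon)$ to obtain pairwise orthogonal positive contractions
\[
f_{0,0},\dots,f_{0,p-1},\ f_{1,0},\dots,f_{1,p}\in A
\]
satisfying conditions (i)--(iv) of Definition \ref{defn:Rokhlin}. Define $g_{r,j}\coloneqq\theta(f_{r,j})\in B$. Since $\theta$ is an isometric $^*$-isomorphism, the $g_{r,j}$ are pairwise orthogonal positive contractions in $B$. To verify the four Rokhlin conditions for $\psi$ on the set $F$, I would write each element $b\in F$ as $b=\theta(a)$ with $a\in\theta^{-1}(F)$ and use that $\theta$ is multiplicative, adjoint-preserving, isometric, and intertwines $\phi$ with $\psi$. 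For instance, condition (iii) follows from
\[
\|\psi(g_{r,j})b-g_{r,j+1}b\|=\|\theta(\phi(f_{r,j})a-f_{r,j+1}a)\|=\|\phi(f_{r,j})a-f_{r,j+1}a\|<\varepsilon,
\]
and conditions (i), (ii), (iv) follow by the same transport argument.

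There is no real obstacle here; the whole content of the lemma is that the Rokhlin property is a purely $^*$-algebraic/metric condition, and every quantity appearing in Definition \ref{defn:Rokhlin_endo} is preserved under conjugation by an isometric $^*$-isomorphism that intertwines the two endomorphisms. The mildly non-routine point worth flagging in the write-up is the reduction $\psi^{p}(B)=\theta(\phi^{p}(A))$, which is exactly what ensures the set $\theta^{-1}(F)$ is an admissible input to the Rokhlin property of $\phi$.
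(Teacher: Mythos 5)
Your proposal is correct and follows essentially the same route as the paper: establish $\theta\circ\phi^{p}=\psi^{p}\circ\theta$, pull the test set back through $\theta^{-1}$, apply the Rokhlin property of $\phi$, and push the resulting towers forward via $\theta$. Your write-up is in fact slightly more explicit than the paper's, which leaves the reduction $\psi^{p}(B)=\theta(\phi^{p}(A))$ implicit.
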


\begin{proof}
Observe that \(\theta \circ \phi = \psi \circ \theta\) implies \(\theta \circ \phi^p = \psi^p \circ \theta\) for any positive integer \(p\). 
By the Rokhlin property of \(\phi\), there exist mutually orthogonal positive contractions
\(
f_{0,0}, \dots, f_{0,p-1},\ f_{1,0}, \dots, f_{1,p} \in A
\)
satisfying conditions (i)–(iv) of Definition \ref{defn:Rokhlin_endo} for \(\phi\). Then the elements 
\(
\theta(f_{0,0}), \dots, \theta(f_{0,p-1}),\ \theta(f_{1,0}), \dots, \theta(f_{1,p})
\)
satisfy conditions (i)–(iv) of Definition \ref{defn:Rokhlin_endo} for \(\psi\). Hence, \(\psi\) has the Rokhlin property.
\end{proof}

We are now ready to prove the main result of this section, which answers \cite[Question 9.1]{ACR21}.

\begin{theo}\label{thm:maximality_of_cuntz}
\(\mathcal{O}_2\) is a maximal \(C^*\)-subalgebra of \(\Q\).
\end{theo}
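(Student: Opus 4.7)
The plan is to reduce the claim, via the corner realizations of Proposition \ref{prop:cuntz_inclusion}, to a maximality statement at the crossed product level, and then to exhaust intermediate \(C^*\)-algebras via the Rokhlin-dimension Galois correspondence of Theorem \ref{thm:pointwise_finite_inclusion_criteria}. Specifically, the \(^*\)-isomorphism \(\phi : \Q \to p(\overset{\longrightarrow}{\BB} \rtimes_{\overline{\varphi}} \mathbb{Z})p\) of Proposition \ref{prop:cuntz_inclusion} restricts to the identification \(\phi : \C \to p(\overset{\longrightarrow}{\F} \rtimes_{\overline{\varphi}} \mathbb{Z})p\) of \eqref{eqn:iso_cuntz}, and \(p = \varphi^{(1)}(1)\) is a full projection in both crossed products. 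Lemma \ref{lem:inclusion_correspondence} therefore reduces the theorem to showing that \(\overset{\longrightarrow}{\F} \rtimes_{\overline{\varphi}} \mathbb{Z}\) is maximal in \(\overset{\longrightarrow}{\BB} \rtimes_{\overline{\varphi}} \mathbb{Z}\).

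To apply Theorem \ref{thm:pointwise_finite_inclusion_criteria} with \(G = \mathbb{Z}\) (which has the approximation property), I need the action generated by \(\overline{\varphi}\) to have pointwise finite relative Rokhlin dimension on the inclusion \(\overset{\longrightarrow}{\F} \subset \overset{\longrightarrow}{\BB}\). The restriction \(\overline{\varphi}|_{\overset{\longrightarrow}{\F}}\) is the automorphism induced by the Cuntz endomorphism \(\varphi(a) = s_2 a s_2^*\) of the UHF algebra \(\F\), which is well known to have the Rokhlin property; combined with the intertwining \eqref{eqn:inductive_limit_relationship} and Lemma \ref{lem:Roklin_iso}, standard inductive-limit arguments transfer this to finite Rokhlin dimension for \(\overline{\varphi}\) on \(\overset{\longrightarrow}{\F}\). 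To promote this to the relative version (Remark \ref{rmk:rokhlin_relative}(iii)), it suffices to check that the central-sequence map \(\widetilde{i} : F(\overset{\longrightarrow}{\F}) \to F(\overset{\longrightarrow}{\BB})\) is well-defined; since \(\BB \cong \F \otimes C(\mathbb{T})\) and the extension of \(\varphi\) acts as \(\varphi \otimes \operatorname{id}\), the stabilization identifications \(\overset{\longrightarrow}{\F} \cong \K \otimes \F\) and \(\overset{\longrightarrow}{\BB} \cong \K \otimes \BB\) arising from \(\varphi\) being a corner embedding, together with Example \ref{ex:sequence_algebra_bunce} and Lemma \ref{lem:sequence_algebra_stablization}, yield this.

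Theorem \ref{thm:pointwise_finite_inclusion_criteria} then forces every intermediate \(C^*\)-algebra to have the form \(C \rtimes_{\overline{\varphi}} \mathbb{Z}\) for some \(\overline{\varphi}\)-invariant \(\overset{\longrightarrow}{\F} \subset C \subset \overset{\longrightarrow}{\BB}\). To finish, I would transport this inclusion through the stabilization isomorphisms above and invoke Corollary \ref{cor:maximality_of_stablized_car}, which rules out any proper intermediate algebra between \(\K \otimes \F\) and \(\K \otimes \BB\); hence \(C \in \{\overset{\longrightarrow}{\F}, \overset{\longrightarrow}{\BB}\}\), as desired. The main obstacle is the middle step: carefully orchestrating the stabilization and tensor-product identifications so that \(\overline{\varphi}\), the inclusion \(\overset{\longrightarrow}{\F} \subset \overset{\longrightarrow}{\BB}\), and the central-sequence machinery are simultaneously compatible, permitting Remark \ref{rmk:rokhlin_relative}(iii) to apply. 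Once this bookkeeping is complete, the preceding results assemble to give the theorem.
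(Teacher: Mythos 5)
Your proposal follows essentially the same route as the paper: reduce via Proposition \ref{prop:cuntz_inclusion} and Lemma \ref{lem:inclusion_correspondence} to the inclusion \(\overset{\longrightarrow}{\F} \rtimes_{\overline{\varphi}} \mathbb{Z} \subset \overset{\longrightarrow}{\BB} \rtimes_{\overline{\varphi}} \mathbb{Z}\), get the Rokhlin property of \(\overline{\varphi}|_{\overset{\longrightarrow}{\F}}\) from the shift endomorphism via Lemma \ref{lem:Roklin_iso}, upgrade to pointwise finite relative Rokhlin dimension through the well-definedness of \(\widetilde{i}: F(\overset{\longrightarrow}{\F}) \to F(\overset{\longrightarrow}{\BB})\) (Example \ref{ex:sequence_algebra_bunce}, Lemma \ref{lem:sequence_algebra_stablization}, Remark \ref{rmk:rokhlin_relative}(iii)), apply Theorem \ref{thm:pointwise_finite_inclusion_criteria}, and close with Corollary \ref{cor:maximality_of_stablized_car}.

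One correction, though: your justification of the central-sequence step rests on the claim that \(\BB \cong \F \otimes C(\mathbb{T})\) with the extended \(\varphi\) acting as \(\varphi \otimes \operatorname{id}\). This is false. The algebra \(\BB\) is the Bunce--Deddens algebra of type \(2^\infty\) (Remark \ref{rmk:form_adic}(iii)); it has \(K_1(\BB) \cong \mathbb{Z}\), whereas \(K_1(\F \otimes C(\mathbb{T})) \cong K_0(\F) \cong \mathbb{Z}[\tfrac{1}{2}]\), so no such tensor splitting exists (indeed \(u\) does not commute with \(\D \subset \F\)). The tensor structure that Example \ref{ex:sequence_algebra_bunce} is meant to exploit lives only at the finite stages \(M_{2^n}(\mathbb{C}) \otimes 1 \subset M_{2^n}(\mathbb{C}) \otimes C(\mathbb{T})\) of the inductive limit presentation of \(\M \subset \B\); that, together with the stabilization identifications \(\overset{\longrightarrow}{\F} \cong \K \otimes \M\), \(\overset{\longrightarrow}{\BB} \cong \K \otimes \B\) and Lemma \ref{lem:sequence_algebra_stablization}, is how the well-definedness of \(\widetilde{i}\) is actually obtained. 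The rest of your argument, including the final appeal to Corollary \ref{cor:maximality_of_stablized_car}, matches the paper.
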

\begin{proof}
By Proposition \ref{prop:cuntz_inclusion} and Lemma \ref{lem:inclusion_correspondence}, it suffices to demonstrate the maximality of 
\(\overset{\longrightarrow}{\F} \rtimes_{\overline{\varphi}} \mathbb{Z} \subset \overset{\longrightarrow}{\BB} \rtimes_{\overline{\varphi}} \mathbb{Z}\).

Identify \(\F\) with \(\bigotimes_{n=1}^\infty M_2(\mathbb{C})\) as in \cite[Page 71]{RS02}. Then \(\varphi\) may be identified with the shift endomorphism \(\rho\) on \(\bigotimes_{n=1}^\infty M_2(\mathbb{C})\) that maps the elementary tensor \(c_1 \otimes c_2 \otimes \cdots\) to \(e_{11} \otimes c_1 \otimes c_2 \otimes \cdots\), where \(e_{11}\) denotes the standard matrix unit in \(M_2(\mathbb{C})\). Since \(\rho\) has the Rokhlin property for endomorphisms (see \cite[Definition~2.1]{BH14} and the example following it), it follows that \(\varphi\) does as well, by Lemma~\ref{lem:Roklin_iso}. Hence, by \cite[Proposition~2.2]{BH14}, the induced $^*$-automorphism \(\overline{\varphi} \in \mathrm{Aut}(\overset{\longrightarrow}{\F})\) has the Rokhlin property.

There exists a $^*$-isomorphism 
\(\psi: \overset{\longrightarrow}{\BB} \to \K \otimes \B\) 
which restricts to a $^*$-isomorphism 
\(\psi: \overset{\longrightarrow}{\F} \to \K \otimes \M\) (see, for example, \cite[Section~6]{LL12}). 
Choose a full projection \(p \in \K \otimes \M\) such that the $^*$-isomorphism 
\(p(\K \otimes \B)p \cong \B\) restricts to a $^*$-isomorphism 
\(p(\K \otimes \M)p \cong \M\).  
Then, by Example \ref{ex:sequence_algebra_bunce} and Lemma \ref{lem:sequence_algebra_stablization}, 
the map 
\[
\widetilde{i}: F(\overset{\longrightarrow}{\F}) \to F(\overset{\longrightarrow}{\BB}),
\]
induced from the inclusion 
\(\overset{\longrightarrow}{\F} \hookrightarrow \overset{\longrightarrow}{\BB}\),
is well-defined. Consequently, \(\overline{\varphi}\) has pointwise finite relative Rokhlin dimension with respect to the inclusion \(\overset{\longrightarrow}{\F} \subset \overset{\longrightarrow}{\BB}\) (see Remark \ref{rmk:rokhlin_relative}).

By the definition of \(\psi\) and Corollary \ref{cor:maximality_of_stablized_car}, \(\overset{\longrightarrow}{\F}\) is a maximal \(C^*\)-subalgebra of \(\overset{\longrightarrow}{\BB}\). Thus, there is no proper intermediate \(C^*\)-subalgebra between \(\overset{\longrightarrow}{\F} \rtimes_{\overline{\varphi}} \mathbb{Z}\) and \(\overset{\longrightarrow}{\BB} \rtimes_{\overline{\varphi}} \mathbb{Z}\) by Theorem \ref{thm:pointwise_finite_inclusion_criteria}. Hence, the claim follows.
\end{proof}
As a consequence of Theorem \ref{thm:maximality_of_cuntz}, we provide an answer to \cite[Questions 9.20 \& 9.21]{ACR21} in Corollary \ref{C:9.20} below. Before stating the consequence, recall the unitary normalizer
$$
N_{\C}(\Q) \coloneqq \{ v \in \Q : v \text{ is unitary and } v \C v^*,\ v^* \C v \subset \C \}.
$$
\begin{cor}
\label{C:9.20}
The inclusion $\mathcal O_2\subset \mathcal Q_2$ is singular:
$N_{\mathcal O_2}(\mathcal Q_2) = \mathcal U(\mathcal O_2)$, the set of all  unitaries in $\mathcal O_2$.
\end{cor}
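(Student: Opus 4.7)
The inclusion $\mathcal{U}(\mathcal{O}_2)\subseteq N_{\mathcal{O}_2}(\mathcal{Q}_2)$ is immediate; the substance is the reverse. Fix $v\in N_{\mathcal{O}_2}(\mathcal{Q}_2)$ and set $\alpha:=\operatorname{Ad}(v)|_{\mathcal{O}_2}\in\operatorname{Aut}(\mathcal{O}_2)$. Since $v\mathcal{O}_2 v^*=\mathcal{O}_2$, the $C^*$-algebra $C^*(\mathcal{O}_2,v)$ is intermediate between $\mathcal{O}_2$ and $\mathcal{Q}_2$, and Theorem~\ref{thm:maximality_of_cuntz} forces it to equal either $\mathcal{O}_2$ (giving $v\in\mathcal{O}_2$) or $\mathcal{Q}_2$. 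The plan is to rule out the latter.

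Assume $C^*(\mathcal{O}_2,v)=\mathcal{Q}_2$. If $\alpha$ is inner, say $\alpha=\operatorname{Ad}(w)$ with $w\in\mathcal{U}(\mathcal{O}_2)$, then $vw^*\in\mathcal{O}_2'\cap\mathcal{Q}_2$; the rigidity of the inclusion $\mathcal{O}_2\subset\mathcal{Q}_2$ from \cite{BC2025} (combined with simplicity of $\mathcal{Q}_2$) yields $\mathcal{O}_2'\cap\mathcal{Q}_2=\mathbb{C}\cdot 1$, so $vw^*\in\mathbb{T}$ and $v\in\mathcal{U}(\mathcal{O}_2)$, directly contradicting $C^*(\mathcal{O}_2,v)=\mathcal{Q}_2\neq\mathcal{O}_2$.

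If $\alpha$ is outer, the universal property of the full crossed product furnishes a surjection
\[
\pi:\mathcal{O}_2\rtimes_\alpha\mathbb{Z}\twoheadrightarrow\mathcal{Q}_2,\qquad a\mapsto a,\ u_\alpha\mapsto v,
\]
and the plan is to show $\pi$ is an isomorphism. Then the canonical conditional expectation $E_0:\mathcal{O}_2\rtimes_\alpha\mathbb{Z}\to\mathcal{O}_2$ --- faithful because $\mathbb{Z}$ is amenable, so full and reduced crossed products agree --- transports via $\pi^{-1}$ to a conditional expectation $E:\mathcal{Q}_2\to\mathcal{O}_2$, contradicting the non-existence result of \cite{ACR18}. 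If $\alpha$ is aperiodic ($\alpha^k$ outer for all $k\neq 0$), Kishimoto's theorem yields simplicity of the crossed product and $\pi$ is automatically injective. If $\alpha$ is not aperiodic, let $n\geq 2$ be the smallest integer with $\alpha^n$ inner; applying the inner-case argument to the normalizer $v^n$ (which implements $\alpha^n$) shows $v^n\in\mathcal{O}_2$, so $\pi$ factors through the twisted cyclic crossed product $\mathcal{O}_2\rtimes_\alpha^{(v^n)}\mathbb{Z}/n\mathbb{Z}$. Minimality of $n$ makes the induced homomorphism $\mathbb{Z}/n\mathbb{Z}\to\operatorname{Out}(\mathcal{O}_2)$ injective, and the Kishimoto-type simplicity criterion for outer finite-group actions on simple $C^*$-algebras gives simplicity of this twisted crossed product. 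Its surjection onto $\mathcal{Q}_2$ is therefore an isomorphism, and the canonical finite-group conditional expectation descends to produce the required $E$.

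The main technical obstacle is the non-aperiodic subcase: verifying simplicity of the twisted cyclic crossed product $\mathcal{O}_2\rtimes_\alpha^{(v^n)}\mathbb{Z}/n\mathbb{Z}$ and tracking the $2$-cocycle arising from $\alpha^n$ being inner carefully enough that the finite-group canonical conditional expectation transports faithfully through the resulting isomorphism onto $\mathcal{Q}_2$.
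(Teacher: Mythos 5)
Your argument is essentially correct, but it takes a much heavier route than the paper. The paper's proof is two lines: by \cite[Theorem 7.4]{ACR20}, the $C^*$-algebra generated by $N_{\mathcal O_2}(\mathcal Q_2)$ is a \emph{proper} subalgebra of $\mathcal Q_2$; since it contains $\mathcal O_2$, Theorem \ref{thm:maximality_of_cuntz} sandwiches it to equal $\mathcal O_2$, and the corollary follows. You instead fix a single normalizer $v$, use maximality only to reduce to the case $C^*(\mathcal O_2,v)=\mathcal Q_2$, and then rule that out by hand via a crossed-product/conditional-expectation argument. That route works, and it has the merit of being self-contained modulo standard crossed-product technology rather than relying on the specific normalizer computation of \cite{ACR20}; but it costs you an inner/outer/periodic trichotomy and, in the non-aperiodic subcase, a simplicity theorem for twisted crossed products of $\mathbb Z/n\mathbb Z$ by pointwise outer actions (a twisted Kishimoto-type result, available via the Packer--Raeburn stabilization trick but requiring an explicit citation, as you acknowledge). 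Two smaller points to fix: (1) the fact $\mathcal O_2'\cap\mathcal Q_2=\mathbb C 1$ is not really what \cite{BC2025} calls rigidity; it is proved in \cite{ACR18} (it follows from $\mathcal F_2'\cap\mathcal Q_2=\mathbb C 1$ since $\mathcal F_2\subset\mathcal O_2$), and you should cite it as such. (2) In the twisted subcase you should record that $\alpha(v^n)=v\,v^n\,v^{*}=v^n$, which is exactly the compatibility condition needed for $(\alpha,v^n)$ to define a twisted $\mathbb Z/n\mathbb Z$-action. With those references supplied, the proof closes; it is just considerably longer than the one-line deduction from maximality that the paper intends.
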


\begin{proof}
 By \cite[Theorem 7.4]{ACR20}, the \(C^*\)-algebra generated by $N_{\C}(\Q)$
is a proper \(C^*\)-subalgebra of \(\Q\). Clearly, the unitary group \(\mathcal{U}(\C)\) is contained in \(N_{\C}(\Q)\), so we have the inclusion
\[
C^*(\mathcal{U}(\C)) \subset C^*(N_{\C}(\Q)) \subsetneq \Q,
\]
which is equivalent to
\[
\C \subset C^*(N_{\C}(\Q)) \subsetneq \Q.
\]
By Theorem \ref{thm:maximality_of_cuntz}, we get that \(\C = C^*(N_{\C}(\Q))\).
Hence $N_{\C}(\Q) \subset \mathcal U(\mathcal O_2)$, 
and the result follows immediately.
\end{proof}
\begin{rmk}
Though every copy of the Cuntz algebra can be identified with 
\(p\big((\K \otimes \M) \rtimes_{\overline{\varphi}} \mathbb{Z}\big)p\) 
for some full projection \(p \in \K \otimes \M\) \cite[Section 4.2]{RS02}, 
Proposition \ref{prop:cuntz_inclusion} plays an important role in showing 
that the canonical copy of the Cuntz algebra in \(\Q\) is maximal in \(\Q\).  
In particular, not every copy of the Cuntz algebra in \(\Q\) is maximal in \(\Q\). 
For example, let  
\[
T \coloneqq \frac{1}{\sqrt{2}}(s_1 + s_2), 
\quad 
V \coloneqq \frac{1}{\sqrt{2}}(s_1 - s_2)(s_1 s_1^* - s_2 s_2^*).
\]
Then \(C^*(T, V) \cong \C\) by \cite[Theorem 1.4]{CL12}, but 
\[
C^*(T, V) \subsetneq \C \subset \Q.
\]

\end{rmk}

\section{A symmetry of \(\mathcal{Q}_2\) and its associated maximal subalgebras}\label{sec:cuntz_inclusion_sym}

Let $A$ be a unital $C^*$-algebra and let $\sigma: A \to A$ be a \(^*\)-automorphism of order 2. The crossed product $A \rtimes_{\sigma} \mathbb{Z}_2$ is the unital $C^*$-algebra generated by $A$ and a unitary $W$ satisfying $W^2 = 1$ and the following universal property: for any unital $C^*$-algebra $B$ and unital \(^*\)-homomorphism $\psi: A \to B$, if there exists a unitary $V \in B$ such that $V^2 = 1$ and $V \psi(a) V^* = \psi(\sigma(a))$ for all $a \in A$, then there exists a unique unital \(^*\)-homomorphism $\tilde{\psi} : A \rtimes_\sigma \mathbb{Z}_2 \to B$ extending $\psi$ and satisfying $\tilde{\psi}(W) = V$.

Let $\sigma : \C \to \C$ be the flip-flop \(^*\)-automorphism of the Cuntz algebra that interchanges $s_1$ and $s_2$. This \(^*\)-automorphism extends uniquely to an outer \(^*\)-automorphism $\sigma : \Q \to \Q$ defined by $\sigma(s_2) = s_1$ and $\sigma(u) = u^*$. 
So we obtain a crossed product C*-algebra $\Q \rtimes_{\sigma} \mathbb{Z}_2$. 
Clearly $\BB$ is $\sigma$-invariant. 

It turns out that 
$\Q \rtimes_{\sigma} \mathbb{Z}_2$, $\Q$, and $\Q^\sigma$ are isomorphic to each other. 
Moreover, the maximality of the inclusion $\C \subset \Q$ also holds for $\C \rtimes_{\sigma} \mathbb{Z}_2 \subset \Q \rtimes_{\sigma} \mathbb{Z}_2$ and for $\C^\sigma \subset \Q^\sigma$.

\subsection{Two realizations of $\Q \rtimes_{\sigma} \mathbb{Z}_2$}
We will see that the crossed product $\Q \rtimes_{\sigma} \mathbb{Z}_2$
can be identified as a C*-algebra of certain endomorphism of the group 
$\mathbb{Z} \rtimes \mathbb{Z}_2$ and a corner of a crossed product. 
Both identifications will be useful in the proofs of the main results in this section. 

\subsubsection{The first realization}

To analyze the structure of $\Q \rtimes_{\sigma} \mathbb{Z}_2$, 
we will identify it as the C*-algebra associated with a certain endomorphism of the group $\mathbb{Z} \rtimes \mathbb{Z}_2$.

We first recall a definition from {\cite{Hirshberg2002}}.

\begin{defn}
\label{D:Epi}
Let $\pi: G \to G$ be an injective group homomorphism such that
\[
\bigcap_{n=0}^\infty \pi^n(G) = \{1_G\} \quad \text{and} \quad [G : \pi(G)] < \infty.
\]
The $C^*$-algebra $\mathcal{E}_\pi$ is the universal $C^*$-algebra generated by $C_r^*(G)$ and an isometry $S$ satisfying the relations:
\begin{enumerate}
    \item $S^* \lambda_x S = 0$ for all $x \notin \pi(G)$;
    \item $S \lambda_x = \lambda_{\pi(x)} S$ for all $x \in G$;
    \item If $x_1, \dots, x_n \in G$ is a complete list of right coset representatives of $\pi(G)$, then
    \[
    \sum_{k=1}^n \lambda_{x_k^{-1}} S S^* \lambda_{x_k} = 1.
    \]
\end{enumerate}
Here $\lambda_x$ denotes the canonical image of $x \in G$ in $C_r^*(G)$.
\end{defn}

To simplify our writing, from now on let
\begin{equation}\label{E:G}
    G := \mathbb{Z} \rtimes \mathbb{Z}_2,
\end{equation}
unless otherwise specified. Then $G$ is a group under the operation
$$
(n, \epsilon)(m, \delta) = \big(n + (-1)^\epsilon m,\; (\epsilon + \delta) \bmod 2\big).
$$
\begin{prop}\label{prop:crossed_alternative}
The crossed product \( \Q \rtimes_{\sigma} \mathbb{Z}_2 \) is the universal \(C^*\)-algebra \( \mathcal{E}_\pi \) associated with the group homomorphism \( \pi : G \to G \) defined by
\(
\pi(m, \epsilon) = (2m - \epsilon,\, \epsilon).
\)
\end{prop}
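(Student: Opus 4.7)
The plan is to show that $\Q \rtimes_\sigma \mathbb{Z}_2$ satisfies the universal property defining $\mathcal{E}_\pi$ under the assignments $\lambda_{(1,0)} \leftrightarrow u$, $\lambda_{(0,1)} \leftrightarrow W$, and $S \leftrightarrow s_2$, where $W$ denotes the self-adjoint unitary implementing $\sigma$. Writing $a := (1,0)$ and $b := (0,1)$, the group $G = \mathbb{Z} \rtimes \mathbb{Z}_2$ is the infinite dihedral group, hence amenable; in particular $C^*(G) = C_r^*(G)$.

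First I would verify that the triple $(u, W, s_2)$ satisfies relations (i)--(iii) of Definition~\ref{D:Epi}. For (ii) it suffices to check the two generators of $G$: $s_2 u = u^2 s_2 = \lambda_{\pi(a)} s_2$ is precisely relation (I) of $\Q$; and a short calculation from $W s_2 W = u s_2$ together with $Wu = u^{-1}W$ gives $s_2 W = u^{-1} W s_2 = \lambda_{\pi(b)} s_2$. Because $\pi$ is a group homomorphism, (ii) then extends to every $x \in G$. For (iii), with right coset representatives $\{e, a\}$ of $\pi(G)$ (so $[G:\pi(G)] = 2$), the identity reads $s_2 s_2^* + u^{-1} s_2 s_2^* u = 1$, which is equivalent to relation (II) of $\Q$ after conjugation by $u^{-1}$. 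For (i), any $x \notin \pi(G)$ may be written as $x = \pi(g) a$; taking adjoints of (ii) yields $s_2^* \lambda_{\pi(g)} = \lambda_g s_2^*$, and hence $s_2^* \lambda_x s_2 = \lambda_g\, (s_2^* u s_2) = 0$ by the Cuntz orthogonality $s_2^* u s_2 = s_2^* s_1 = 0$.

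The assignment $\lambda_a \mapsto u$, $\lambda_b \mapsto W$ is compatible with $W^2 = 1$ and $W u W = u^{-1}$, and by amenability of $G$ it extends to a unital $^*$-homomorphism $C_r^*(G) \to \Q \rtimes_\sigma \mathbb{Z}_2$. Together with the isometry $s_2$ satisfying (i)--(iii), the universal property of $\mathcal{E}_\pi$ yields a $^*$-homomorphism $\Phi : \mathcal{E}_\pi \to \Q \rtimes_\sigma \mathbb{Z}_2$. Conversely, inside $\mathcal{E}_\pi$ the pair $\lambda_a, S$ satisfies relations (I) and (II) of $\Q$ (the first from (ii) at $x = a$, the second from (iii)), so the universal property of $\Q$ produces a unital $^*$-homomorphism $\Q \to \mathcal{E}_\pi$. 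The element $\lambda_b$ is a self-adjoint unitary with $\lambda_b \lambda_a \lambda_b = \lambda_a^{-1}$ (from $bab = a^{-1}$ in $G$) and $\lambda_b S \lambda_b = \lambda_a S$ (a direct consequence of (ii) at $x = b$ together with $b \cdot \pi(b) = (0,1)(-1,1) = (1,0) = a$). Hence $\lambda_b$ implements $\sigma$ on the image of $\Q$ in $\mathcal{E}_\pi$, and the universal property of the $\mathbb{Z}_2$-crossed product produces $\Psi : \Q \rtimes_\sigma \mathbb{Z}_2 \to \mathcal{E}_\pi$ extending this by $W \mapsto \lambda_b$. Both $\Phi \circ \Psi$ and $\Psi \circ \Phi$ are the identity on generating sets, therefore on the whole algebras.

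The main potential obstacle is the bookkeeping between the semidirect product multiplication in $G$ and the relations in $\Q \rtimes_\sigma \mathbb{Z}_2$: one must verify that the form of relation (iii) produced by the chosen right coset representatives really is equivalent to relation (II) of $\Q$, and that the commutation formula $s_2 \lambda_x = \lambda_{\pi(x)} s_2$ derived on generators propagates correctly across products $x = a^m b^\epsilon$, ensuring that $\lambda_b S \lambda_b = \lambda_a S$ (rather than any variant with a different coefficient) matches the extended flip-flop $\sigma(s_2) = u s_2$.
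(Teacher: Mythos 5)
Your proposal is correct and follows essentially the same route as the paper: verify that $(u, W, s_2)$ satisfies relations (i)--(iii) of $\mathcal{E}_\pi$ (with (ii) reducing to relation (I) and to $s_2W=u^{-1}Ws_2$, and (iii) with coset representatives $\{e,(1,0)\}$ being relation (II) conjugated by $u^{-1}$), and conversely use the universal properties of $\Q$ and of the $\mathbb{Z}_2$-crossed product to obtain the inverse map from the relations of $\mathcal{E}_\pi$. The only cosmetic difference is that you package the argument as two explicitly mutually inverse $^*$-homomorphisms, whereas the paper phrases it as both algebras being universal for the same set of relations; your added checks (amenability of the infinite dihedral group so that $C^*(G)=C_r^*(G)$, and the derivation of relation (i) from (ii) and $s_2^*s_1=0$) are correct details the paper leaves implicit.
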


\begin{proof}
Notice that \( \pi \) is an injective group homomorphism satisfying  
\[
\bigcap_{n=0}^\infty \pi^n(G) = \{(0,0)\} \quad \text{and} \quad [G : \pi(G)] = 2.
\]

Then \( \mathcal{E}_\pi \) is the universal \( C^* \)-algebra generated by a unitary \( \lambda_{(1,0)} \), a unitary \( \lambda_{(0,1)} \) of order 2, and an isometry \( S \) satisfying relations \textup{(i)}–\textup{(iii)} of \( \mathcal{E}_\pi \) above. It is not hard to verify that the generators \( s_2 \), \( u \), and \( W \) of \( \Q \rtimes_{\sigma} \mathbb{Z}_2 \) satisfy these same relations.  

Next, we show that \( \Q \rtimes_{\sigma} \mathbb{Z}_2 \) is universal for the relations \textup{(i)}–\textup{(iii)} of \( \mathcal{E}_\pi \). Let \( v \) be a unitary, \( w \) a unitary of order 2, and \( s \) an isometry in some arbitrary \( C^* \)-algebra \( B \) satisfying the same relations \textup{(i)}–\textup{(iii)}.

The map \( s_2 \mapsto s \), \( u \mapsto v \) extends to a \( ^* \)-homomorphism \( \psi : \Q \to B \), since \( s \) and \( v \) satisfy relations \textup{(ii)} and \textup{(iii)}, corresponding to relations \textup{(I)} and \textup{(II)} of \( \Q \).

Moreover, the relations \textup{(i)}–\textup{(iii)} imply:
\[
w \psi(s_2) w = \psi(\sigma(s_2)), \quad \text{and} \quad w \psi(u) w = \psi(\sigma(u)).
\]
Hence, by the universal property of the crossed product \( \Q \rtimes_{\sigma} \mathbb{Z}_2 \), the map \( \psi \) extends to a \( ^* \)-homomorphism
\[
\tilde{\psi} : \Q \rtimes_{\sigma} \mathbb{Z}_2 \to B.
\]
It follows that \( \Q \rtimes_{\sigma} \mathbb{Z}_2 \) is universal for the relations \textup{(i)}–\textup{(iii)}.
\end{proof}

\begin{rmk}
\label{R:ideEpi}
Proposition \ref{prop:crossed_alternative} allows us to use the notation  
\(s_2\), \(u\), and \(W\) interchangeably with \(S\), \(\lambda_{(1,0)}\), and \(\lambda_{(0,1)}\), respectively. 
Likewise, we use \(\Q \rtimes_\sigma \mathbb{Z}_2\) interchangeably with \(\mathcal{E}_\pi\).
\end{rmk}
\subsubsection{The second realization}

In this subsection, we highlight the structure of \( \mathcal{E}_\pi \) and some of its \( C^* \)-subalgebras for the group homomorphism \( \pi \) defined in Proposition \ref{prop:crossed_alternative} (see \cite{Hirshberg2002} for the general setting). Let \( \gamma : \mathbb{T} \curvearrowright \mathcal{E}_\pi \) be the action of \( \mathbb{T} \) on \( \mathcal{E}_\pi \) given by
\[
\gamma_t(S) = tS, \quad \gamma_t(a) = a \quad \text{for all } a \in C_r^*(G),\; t \in \mathbb{T}.
\]
The fixed-point algebra \( \mathcal{F} \) of \( \gamma \) is given by
\begin{equation}
\label{eqn:guage_algebra_sym}
\mathcal{F} = \overline{\operatorname{span}} \{ \lambda_x S^n S^{*n} \lambda_y : x, y \in G,\; n \in \mathbb{N} \} = \overline{\bigcup_{n=0}^{\infty} \mathcal{F}^{(n)}},    
\end{equation}
where \(\mathcal{F}^{(n)} = \overline{\operatorname{span}} \{ \lambda_x S^n S^{*n} \lambda_y : x, y \in G \}
\),
and \( \mathcal{F}^{(n)} \subset \mathcal{F}^{(n+1)} \). Moreover, for each \(n\), we have
\begin{equation*}
 \mathcal{F}^{(n)} \cong M_{2^n}(\mathbb{C}) \otimes C_r^*(G)   
\end{equation*}
and the unital \(^*\)-homomorphism \(i_0 : \mathcal{F}^{(0)} \to \mathcal{F}^{(1)}\) defined by
\begin{equation*}
\label{eqn:connecting_map_sym}
\lambda_{(1,0)} \mapsto e_{12} \otimes 1 + e_{21} \otimes \lambda_{(1,0)}, \quad
\lambda_{(0,1)} \mapsto e_{12} \otimes \lambda_{(1,1)} + e_{21} \otimes \lambda_{(1,1)}
\end{equation*}
induces the connecting maps \(i_n \coloneqq \mathrm{id}_{2^n} \otimes i_0\) and yields the inductive system
\[
\mathcal{F} = \varinjlim (\mathcal{F}^{(n)}, i_n).
\]

\begin{rmk}
Let \(\lambda_{(m, \epsilon_1)} S^n S^{*n} \lambda_{(k, \epsilon_2)} \in \mathcal{F}\). A simple computation shows that
\begin{equation*}
\begin{aligned}
    \lambda_{(m, \epsilon_1)} S^n S^{*n} \lambda_{(k, \epsilon_2)} 
    &= \lambda_{(m - \epsilon_1, 0)} S^n S^{*n} \lambda_{(\epsilon_1 + (-1)^{\epsilon_1} k, \epsilon_2)} \\
    &= \lambda_{(r, 0)} S^n S^{*n} \lambda_{(-r + m + (-1)^{\epsilon_1} k, \epsilon_2)},
\end{aligned}
\end{equation*}
where \(m - \epsilon_1 \equiv r \pmod{2^n}\). Note that \(0 \le r \le 2^n - 1\), and \(\lambda_{(-r + m + (-1)^{\epsilon_1} k, \epsilon_2)}\) can be an arbitrary in \( C_r^*(G)\). Therefore, \(\mathcal{F}\) can be reformulated as
\begin{equation*}
\label{eqn:bunce_sym_reformulation}
    \mathcal{F} = \overline{\operatorname{span}} \left\{ \lambda_{(m, 0)} S^n S^{*n} \lambda_{(k, \epsilon)} : 0 \le m \le 2^n - 1,\; (k, \epsilon) \in G,\; n \in \mathbb{N} \right\}.
\end{equation*}
\end{rmk}

Now, we state the crossed product representation of \(\mathcal{E}_\pi\). Define 
\begin{align}
\label{E:varphiF}
\varphi : \mathcal{F} \to \mathcal{F}, \ \varphi(a) = S a S^*
\end{align}
and consider the inductive limit \(\overset{\longrightarrow}{\mathcal{F}} = \varinjlim (\mathcal{F}, \varphi)\) with inductive limit maps \(\varphi^{(n)} : \mathcal{F} \to \overset{\longrightarrow}{\mathcal{F}}\) and induced \(^*\)-automorphism \(\overline{\varphi}\). Let \(w\) implement \(\overline{\varphi}\) in \(\overset{\longrightarrow}{\mathcal{F}} \rtimes_{\overline{\varphi}} \mathbb{Z}\), and set \(p := \varphi^{(1)}(1)\), \(T := w p\). Then \(p\) is a full projection in \(\overset{\longrightarrow}{\mathcal{F}}\), \(T\) is an isometry in the corner \(p(\overset{\longrightarrow}{\mathcal{F}} \rtimes_{\overline{\varphi}} \mathbb{Z}) p\), and there exists a \(^*\)-isomorphism
\begin{equation}\label{eqn:crossed_identified_sym}
\phi : \mathcal{E}_\pi \to p(\overset{\longrightarrow}{\mathcal{F}} \rtimes_{\overline{\varphi}} \mathbb{Z}) p    
\end{equation}
such that \(\phi(S) = T\) and \(\phi(a) = \varphi^{(1)}(a)\) for all \(a \in \mathcal{F}\).

\subsection{A symmetry of $\Q$}

This subsection focuses on the symmetry $\sigma$ of $\Q$ that extends the flip-flop $^*$-automorphism of $\C$. We first provide some concrete descriptions of the fixed-point algebra $\Q^\sigma$ of $\Q$ under $\sigma$. We then show that the $C^*$-algebras $\Q \rtimes_{\sigma} \mathbb{Z}_2$, $\Q$, and $\Q^\sigma$ are all isomorphic. 



The following result generalizes Theorem 1.4 of Choi and Latrémolière \cite{CL12} to $\Q$, and it also answers some questions posed in \cite{ACR21}.

\begin{theo}\label{thm:fixed_sym}
Let
\[
T \coloneqq \frac{1}{\sqrt{2}}(s_1 + s_2), \quad V \coloneqq \frac{1}{\sqrt{2}}(s_1 - s_2)(s_1 s_1^* - s_2 s_2^*), \quad R\coloneqq u+u^*.
\]
Then
\[
\Q^\sigma = C^*(T, V, R) = C^*\big(x u^h + \sigma(x) u^{-h} \mid x \in \C,\, h \in \mathbb{Z} \big).
\]
\end{theo}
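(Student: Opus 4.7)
The plan is to prove the two equalities separately: first $\Q^\sigma = \mathcal M$, where $\mathcal M := C^*(xu^h + \sigma(x)u^{-h} : x \in \C,\, h \in \mathbb Z)$, and then $\mathcal M = \mathcal A := C^*(T, V, R)$. The inclusions $\mathcal M \subset \Q^\sigma$ and $\mathcal A \subset \Q^\sigma$ are immediate from $\sigma$-invariance of each prescribed generator; for the latter, note that $V = \tfrac{1}{\sqrt 2}(s_1 - s_2)P$ with $P := s_1 s_1^* - s_2 s_2^*$ is a product of two anti-invariants since $\sigma(P) = -P$.

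For $\Q^\sigma \subset \mathcal M$, I would first show that every element of $\Q$ is a norm-limit of finite sums $\sum_h x_h u^h$ with $x_h \in \C$. Starting from the spanning set in Remark \ref{rmk:form_adic}(i), the identification $u^{k_1} s_{2^{n_1}} = s_\mu$ (the Cuntz word determined by writing $k_1$ in base $2$, valid for $0 \le k_1 < 2^{n_1}$), together with the rule $s_{2^{n_2}}^* u^{2^{n_2} q + r} = u^q s_{2^{n_2}}^* u^r$ and the observation $s_{2^{n_2}}^* u^r \in \C + u\,\C$ for $0 \le r < 2^{n_2}$, reduces each monomial to the form $x u^h$ with $x \in \C$, $h \in \mathbb Z$. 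Applying the bounded conditional expectation $E = (\mathrm{id} + \sigma)/2$, which fixes every $y \in \Q^\sigma$, to approximants yields $y = E(y) \approx \sum_h \tfrac{1}{2}(x_h u^h + \sigma(x_h) u^{-h}) \in \mathcal M$.

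The content of the proof lies in $\mathcal M \subset \mathcal A$. Decompose $x = x_+ + x_-$ with $x_\pm := (x \pm \sigma(x))/2$. By \cite[Theorem 1.4]{CL12}, $\C^\sigma = C^*(T, V) \subset \mathcal A$, so $x_+ \in \mathcal A$. Since $P^2 = 1$ and $\sigma(P) = -P$, every anti-invariant element of $\C$ has the form $y P$ with $y := x_- P \in \C^\sigma$. Then
\[
xu^h + \sigma(x)u^{-h} = x_+(u^h + u^{-h}) + y P (u^h - u^{-h}),
\]
and the first summand lies in $\mathcal A$ because $u^h + u^{-h}$ is a Chebyshev polynomial in $R$. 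Hence it suffices to show $P(u^h - u^{-h}) \in \mathcal A$ for all $h \in \mathbb Z$.

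The key identity, which I expect to be the main observation to discover, is
\[
P(u - u^{-1}) = 2(TT^* - VV^*) - R.
\]
This follows from a short direct computation: $s_1 = us_2$ gives $s_1 s_2^* = s_1 s_1^* u$ and $s_2 s_1^* = s_2 s_2^* u^{-1}$, whence $TT^* - VV^* = s_1 s_2^* + s_2 s_1^* = s_1 s_1^* u + s_2 s_2^* u^{-1}$; substituting $s_1 s_1^* = (1 + P)/2$ and $s_2 s_2^* = (1 - P)/2$ yields $TT^* - VV^* = R/2 + P(u - u^{-1})/2$. This establishes the case $h = 1$, and the Chebyshev recursion $u^{h+1} - u^{-(h+1)} = (u^h - u^{-h})R - (u^{h-1} - u^{-(h-1)})$ propagates membership to all $h \in \mathbb Z$ by induction on $|h|$, completing the proof.
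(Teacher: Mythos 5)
Your proof is correct, and while it shares the paper's overall skeleton, it handles the crucial step by a genuinely different device. Both arguments use the easy inclusions, the averaging map $\tfrac{1}{2}(\mathrm{id}+\sigma)$ together with the density of sums $\sum_h x_h u^h$ ($x_h\in\C$) to get $\Q^\sigma\subset C^*\big(xu^h+\sigma(x)u^{-h}\big)$, the identification $\C^\sigma=C^*(T,V)$ from \cite{CL12}, and the same three-term recursion in $R$ to pass from $h$ to $h+1$. They diverge on how the containment $xu^h+\sigma(x)u^{-h}\in C^*(T,V,R)$ is seeded: the paper works with the monomials $x=u^{k_1}s_{2^{n_1}}s_{2^{n_2}}^*u^{-k_2}$, settles $h=1$ by observing that either $xu$ or $xu^*$ again lies in $\C$ (writing $xu+\sigma(xu)=(x+\sigma(x))R-(xu^*+\sigma(xu^*))$ in the second case), and then runs the recursion on the whole expression $xu^k+\sigma(x)u^{-k}$; you instead split an arbitrary $x\in\C$ into $\sigma$-eigencomponents, write the anti-invariant part as $yP$ with $y\in\C^\sigma$ and $P=s_1s_1^*-s_2s_2^*$, and reduce everything to the single element $P(u-u^{-1})$, which your identity $P(u-u^{-1})=2(TT^*-VV^*)-R$ exhibits explicitly in $C^*(T,V,R)$ (the computation $TT^*-VV^*=s_1s_2^*+s_2s_1^*=s_1s_1^*u+s_2s_2^*u^{-1}$ checks out). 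Your route avoids the monomial case analysis and makes the role of the three generators completely transparent through a closed formula, at the price of having to discover that formula; the paper's route needs no new identity but is tied to the special form of the spanning monomials. One minor point: in your density step, the cleanest reduction of $u^{k_1}s_{2^{n_1}}s_{2^{n_2}}^*u^{k_2}$ is to write $k_2=-k_2'+2^{n_2}q$ with $0\le k_2'<2^{n_2}$, so that the monomial equals $\big(u^{k_1}s_{2^{n_1}}s_{2^{n_2}}^*u^{-k_2'}\big)u^{2^{n_2}q}$, already of the form $xu^h$ with $x\in\C$; the route you sketch via $s_{2^{n_2}}^*u^{2^{n_2}q+r}=u^qs_{2^{n_2}}^*u^r$ leaves a power of $u$ in the middle that must still be commuted past $s_{2^{n_1}}$ using $s_{2^{n_1}}u^q=u^{2^{n_1}q}s_{2^{n_1}}$ — this is easily repaired and does not affect the validity of the argument.
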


\begin{proof}
Note that $C^*(T, V) = \Q^\sigma \cap \C=\mathcal{O}_2^{\sigma}$ (see \cite[Theorem 1.4]{CL12}). Since the fixed points of $\Q$ under $\sigma$ are of the form $a + \sigma(a)$ \cite[Lemma 2.1]{CL09}, we obtain
\begin{align}
\label{E:Q13}
C^*(T, V, R) \subset C^*\big(x u^h + \sigma(x) u^{-h} \mid x \in \C,\, h \in \mathbb{Z} \big) \subset \Q^\sigma. 
\end{align}
Let \( C \coloneqq C^*(T, V, R) \). Using \cite[Lemma 2.1]{CL09} 
and Remark \ref{rmk:form_adic}, it suffices to show the following: if 
\begin{align}
\label{eqn:element_cuntz}
 a \coloneqq x u^m \in \Q\ \text{ with }\
x=u^{k_1} S_{2^{n_1}} S_{2^{n_2}}^* u^{-k_2} \in \C,\quad m \ge 0,  
\end{align}
and \( a + \sigma(a) \in \Q^\sigma \), then \( a + \sigma(a) \in C \). We prove the claim by induction on \(m\).

Clearly, if \( m = 0 \), then \( a + \sigma(a) \in \C^\sigma = C^*(T, V) \subset C \). 
Now suppose \( x = u^{k_1} S_{2^{n_1}} S_{2^{n_2}}^* u^{-k_2} \in \C \) and \( m = 1 \). Then one has either \( xu \in \C \) or \( xu^* \in \C \). Indeed, if \( k_2 \ge 1 \), then \( 0 \le k_2 - 1 \le 2^{n_2} - 1 \), and thus
\[
xu = u^{k_1} S_{2^{n_1}} S_{2^{n_2}}^* u^{-(k_2 - 1)} \in \C.
\]
On the other hand, if \( k_2 = 0 \), then
\[
xu^* = u^{k_1} S_{2^{n_1}} S_{2^{n_2}}^* u^{-1} \in \C.
\]
Consequently, if \( xu \in \C \), then \( a + \sigma(a) = xu + \sigma(xu) \in \C^\sigma \subset C \). If \( xu^* \in \C \), we observe that
\begin{align*}\label{eqn:induction1}
a + \sigma(a) &= xu + \sigma(xu) \nonumber \\
&= (x + \sigma(x))(u + u^*) - (xu^* + \sigma(xu^*)) \in C
\end{align*}
as $x + \sigma(x)\in \mathcal{O}_2^\sigma$, $u+u^*\in \mathcal{Q}_2^\sigma$, and 
$xu^* + \sigma(xu^*)\in \mathcal{O}_2^\sigma$.

Fix $k\in\mathbb N$, $k\ge1$, and let $x$ be as in \eqref{eqn:element_cuntz} (so $x\in\C$ is fixed). For $m\ge1$ set $b_m:=xu^m$. 
Assume as induction hypothesis that
\[
b_m+\sigma(b_m)\in C \quad\text{for all }1\le m\le k.
\]
We will show that $b_{k+1}+\sigma(b_{k+1})\in C$.
Note that
\[
xu^{k+1} + \sigma(x u^{k+1}) = (xu^k + \sigma(x u^k))(u + u^{*}) - (x u^{k-1} + \sigma(x u^{k-1})).
\]
By the induction hypothesis, each term on the right belongs to \( C \). Hence \( xu^{k+1} + \sigma(x u^{k+1}) \in C \). 

Therefore, all three \(C^*\)-algebras in \eqref{E:Q13} coincide. 
\end{proof}

The following result shows that \( \Q \rtimes_{\sigma} \mathbb{Z}_2 \) is a Kirchberg algebra satisfying the UCT. 

\begin{prop}\label{prop:ktheory_sym}
\( \Q \rtimes_{\sigma} \mathbb{Z}_2 \) is a simple, nuclear, purely infinite \(C^*\)-algebra satisfying the UCT with
\[
K_0(\Q \rtimes_{\sigma} \mathbb{Z}_2) \cong \mathbb{Z}, \quad K_1(\Q \rtimes_{\sigma} \mathbb{Z}_2) \cong \mathbb{Z}.
\]  
\end{prop}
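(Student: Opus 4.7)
The plan is to split the proof into two parts: first establishing the four structural properties (simplicity, nuclearity, pure infiniteness, UCT), and then computing the $K$-groups. Throughout, I would use the realization \eqref{eqn:crossed_identified_sym}, namely
\[
\Q \rtimes_\sigma \mathbb{Z}_2 \cong p\bigl(\overset{\longrightarrow}{\mathcal{F}} \rtimes_{\overline{\varphi}} \mathbb{Z}\bigr)p,
\]
where $p=\varphi^{(1)}(1)$ is a full projection.

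For the structural properties I would proceed as follows. Simplicity follows because $\sigma$ is outer on $\Q$ (as stated at the start of this section) and $\Q$ is simple (Remark \ref{rmk:form_adic}(v)); outer actions of finite groups on simple $C^*$-algebras produce simple crossed products (Kishimoto). Nuclearity is automatic since $\Q$ is nuclear and $\mathbb{Z}_2$ is amenable. Pure infiniteness for an outer action of a finite group on a Kirchberg algebra with simple crossed product is standard, for example via Jeong--Osaka, or by a direct argument using the canonical conditional expectation $E\colon \Q \rtimes_\sigma \mathbb{Z}_2 \to \Q$ together with pure infiniteness of $\Q$ to produce infinite projections inside every nonzero hereditary subalgebra. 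The UCT is preserved under crossed products by countable amenable discrete groups (Tu), so it descends from $\Q$ to $\Q \rtimes_\sigma \mathbb{Z}_2$.

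For the $K$-theory, the Morita equivalence from \eqref{eqn:crossed_identified_sym} gives
\[
K_*(\Q \rtimes_\sigma \mathbb{Z}_2) \cong K_*\bigl(\overset{\longrightarrow}{\mathcal{F}} \rtimes_{\overline{\varphi}} \mathbb{Z}\bigr),
\]
to which I would apply the Pimsner--Voiculescu six-term exact sequence. Since $\overset{\longrightarrow}{\mathcal{F}} = \varinjlim(\mathcal{F}, \varphi)$, continuity of $K$-theory identifies $K_*(\overset{\longrightarrow}{\mathcal{F}}) = \varinjlim(K_*(\mathcal{F}), \varphi_*)$, and the map $\operatorname{id}-\overline{\varphi}_*^{-1}$ in the Pimsner--Voiculescu sequence translates into $\operatorname{id}-\varphi_*$ at the level of this direct limit. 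Next, using $\mathcal{F} = \varinjlim_n(\mathcal{F}^{(n)}, i_n)$ with $\mathcal{F}^{(n)} \cong M_{2^n}(\mathbb{C}) \otimes C^*(G)$, together with the presentation of $i_0$ just after \eqref{eqn:guage_algebra_sym} and the known $K$-theory of $C^*(G) = C^*(\mathbb{Z} \rtimes \mathbb{Z}_2)$, I would compute $K_*(\mathcal{F})$ and the induced action of $\varphi_*$ on it. Passing to the inductive limit and reading off kernels and cokernels of $\operatorname{id}-\varphi_*$ in the Pimsner--Voiculescu sequence should then yield $K_0 \cong K_1 \cong \mathbb{Z}$.

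The main obstacle will be the combinatorial bookkeeping of $(i_0)_*$ and $\varphi_*$ on $K_*(C^*(\mathbb{Z} \rtimes \mathbb{Z}_2))$ and its matrix inflations, since both maps mix the group generators $\lambda_{(1,0)}$ and $\lambda_{(1,1)}$ with matrix units in a nontrivial way. Once these induced maps are made transparent and the inductive limit $K_*(\overset{\longrightarrow}{\mathcal{F}})$ is identified, the extraction of $\mathbb{Z}$ in each degree from the six-term sequence is routine.
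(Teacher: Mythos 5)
Your argument for simplicity, nuclearity and pure infiniteness, and your entire $K$-theory plan, coincide with the paper's proof: outerness of $\sigma$ together with Kishimoto/Jeong gives the structural properties, and the $K$-groups are computed exactly as you propose --- via the Morita equivalence \eqref{eqn:crossed_identified_sym}, the inductive limit $\mathcal{F}=\varinjlim\bigl(M_{2^n}(\mathbb{C})\otimes C_r^*(G), i_n\bigr)$ with $K_0(C_r^*(\mathbb{Z}\rtimes\mathbb{Z}_2))\cong\mathbb{Z}^3$ and $K_1=0$, and then Pimsner--Voiculescu. The bookkeeping you defer is carried out in the paper: the connecting map acts on $K_0$ by a $3\times 3$ integer matrix whose inductive limit is $K_0(\mathcal{F})\cong\mathbb{Z}[\tfrac{1}{2}]\oplus\mathbb{Z}$ with $K_1(\mathcal{F})=0$, and $\overline{\varphi}_*$ scales the $\mathbb{Z}[\tfrac{1}{2}]$ summand by $\tfrac{1}{2}$ and fixes the $\mathbb{Z}$ summand, whence both $K$-groups of the crossed product by $\mathbb{Z}$ are $\mathbb{Z}$.

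The one genuine gap is your UCT step. The assertion that the UCT is preserved under crossed products by countable amenable discrete groups is not a theorem: for a general action of even a finite cyclic group on a nuclear $C^*$-algebra satisfying the UCT this is open, and it is known that the UCT problem itself reduces to the question for crossed products of $\mathcal{O}_2$ by $\mathbb{Z}_2$ and $\mathbb{Z}_3$. Tu's theorem gives the UCT for $C^*$-algebras of amenable groupoids, so to invoke it you would need to realize $\Q\rtimes_\sigma\mathbb{Z}_2$ as such a groupoid algebra (for instance by checking that $\sigma$ is compatible with a Cartan or groupoid model of $\Q$), which you do not do. The paper avoids the issue by deducing the UCT from the very Morita equivalence you use for the $K$-theory: $\overset{\longrightarrow}{\mathcal{F}}$ is an inductive limit of matrix algebras over $C_r^*(\mathbb{Z}\rtimes\mathbb{Z}_2)$, hence satisfies the UCT, and the UCT passes to crossed products by $\mathbb{Z}$; therefore $\overset{\longrightarrow}{\mathcal{F}}\rtimes_{\overline{\varphi}}\mathbb{Z}$, and with it the full corner $\Q\rtimes_\sigma\mathbb{Z}_2$, satisfies the UCT. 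Since you already have \eqref{eqn:crossed_identified_sym} in hand for the $K$-theory computation, the fix is simply to route the UCT claim through it as well.
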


\begin{proof}
The outerness of $\sigma$ (\cite{ACR21}) and the amenability of \(\mathbb{Z}_2\) ensure that $\Q \rtimes_{\sigma} \mathbb{Z}_2$ is a simple, nuclear, purely infinite \(C^*\)-algebra (see \cite{Jeong95, Kishimoto81}). This crossed product satisfies the UCT since it is Morita equivalent to $\overset{\longrightarrow}{\mathcal{F}} \rtimes_{\overline{\varphi}} \mathbb{Z}$, as shown in \eqref{eqn:crossed_identified_sym}, and the latter satisfies the UCT \cite{RS02}.


It remains to compute the K-theory of \( \Q \rtimes_{\sigma} \mathbb{Z}_2 \). For this, 
let \(P_{(0,1)} = \frac{1 + \lambda_{(0,1)}}{2}\) and \(P_{(1,1)} = \frac{1 + \lambda_{(1,1)}}{2}\). It is known that \([1]\), \([P_{(0,1)}]\), and \([P_{(1,1)}]\) generate \(K_0(C_r^*(G)) \cong \mathbb{Z}^3\), and that \(K_1(C_r^*(G)) = 0\) (see \cite[6.10.4]{Bla98}). Using \eqref{eqn:connecting_map_sym}, we have
\[
\begin{aligned}
i_0(1) &= e_{11} \otimes 1 + e_{22} \otimes 1, \\
i_0(P_{(1,1)}) &= e_{11} \otimes P_{(1,1)} + e_{22} \otimes \frac{1}{2}(1+\lambda_{(2,1)}),\\
i_0(P_{(0,1)}) &= \tfrac{1}{2} e_{11} \otimes 1 + \tfrac{1}{2} e_{12} \otimes \lambda_{(1,1)} + \tfrac{1}{2} e_{21} \otimes \lambda_{(1,1)} + \tfrac{1}{2} e_{22} \otimes 1.
\end{aligned}
\]
Hence, the induced map on \(K_0\) is given by
\[
[i_0(1)] = 2 \cdot [1], \quad [i_0(P_{(1,1)})] = 2 \cdot [P_{(1,1)}], \quad [i_0(P_{(0,1)})] = [1].
\]
In matrix form, this is given by
\[
A = 
\begin{pmatrix}
2 & 0 & 1 \\  
0 & 2 & 0 \\  
0 & 0 & 0 
\end{pmatrix}.
\]
Then \(K_0(\mathcal{F})\) is the inductive limit of the system
\[
\mathbb{Z}^3 \xrightarrow{A} \mathbb{Z}^3 \xrightarrow{A} \mathbb{Z}^3 \to \cdots,
\]
where the connecting maps are given by this matrix \(A\). So, \(K_0 (\mathcal{F})  \cong \mathbb{Z}\left[\tfrac{1}{2}\right] \oplus \mathbb{Z}\) and \(K_1 (\mathcal{F})=0\).

The \(^*\)-isomorphism \(\overline{\varphi}\) on $\overset{\longrightarrow}{\mathcal{F}}$ induces an action on \(K_0(\overset{\longrightarrow}{\mathcal{F}})\) that scales the first component by \(\frac{1}{2}\) and leaves the second component unchanged. Using the Pimsner–Voiculescu exact sequence yields
\[
K_0(\overset{\longrightarrow}{\mathcal{F}} \rtimes_{\overline{\varphi}} \mathbb{Z}) = K_1(\overset{\longrightarrow}{\mathcal{F}} \rtimes_{\overline{\varphi}} \mathbb{Z}) = \mathbb{Z}.
\]
From \eqref{eqn:crossed_identified_sym} we get that
\[
K_0(\Q \rtimes_{\sigma} \mathbb{Z}_2) = K_1(\Q \rtimes_{\sigma} \mathbb{Z}_2) = \mathbb{Z}.
\]
We're done.
\end{proof}

A quick consequence of Proposition \ref{prop:ktheory_sym} is the following:

\begin{theo}
\label{T:3iso}
$\Q \rtimes_{\sigma} \mathbb{Z}_2\cong \Q\cong \Q^\sigma.$
\end{theo}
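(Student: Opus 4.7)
The plan is to invoke the Kirchberg--Phillips classification theorem: two unital Kirchberg algebras satisfying the UCT are isomorphic if and only if their pointed $K$-theory $(K_0, [1], K_1)$ agrees. By Proposition \ref{prop:ktheory_sym} and Remark \ref{rmk:form_adic}(v), both $\Q$ and $\Q \rtimes_\sigma \mathbb{Z}_2$ are unital Kirchberg algebras satisfying the UCT with $K_0 \cong K_1 \cong \mathbb{Z}$, so what remains is to match the class of the unit and to bring $\Q^\sigma$ into the same framework.

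The vanishing of $[1]$ in $K_0$ is immediate from relation (II) of Definition \ref{defn:2-adic}. Since $s_2$ is an isometry, $[s_2 s_2^*] = [s_2^* s_2] = [1]$ in $K_0$, and conjugation by the unitary $u$ preserves Murray--von Neumann classes, so the identity $1 = s_2 s_2^* + u s_2 s_2^* u^{-1}$ yields $[1] = 2[1]$ in $K_0(\Q) \cong \mathbb{Z}$ and likewise in $K_0(\Q \rtimes_\sigma \mathbb{Z}_2) \cong \mathbb{Z}$, forcing $[1] = 0$ in each case. Kirchberg--Phillips then delivers $\Q \rtimes_\sigma \mathbb{Z}_2 \cong \Q$.

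For $\Q^\sigma$, I would realize it as a full corner of the crossed product. Let $W$ be the order-two unitary implementing $\sigma$ in $\Q \rtimes_\sigma \mathbb{Z}_2$ and set $e \coloneqq \tfrac{1+W}{2}$. The standard map $x \mapsto xe$ is a $^*$-isomorphism $\Q^\sigma \to e(\Q \rtimes_\sigma \mathbb{Z}_2)e$, and since $\Q \rtimes_\sigma \mathbb{Z}_2$ is simple (Proposition \ref{prop:ktheory_sym}), the projection $e$ is automatically full. Morita equivalence then transfers simplicity, nuclearity, pure infiniteness, the UCT, and both $K$-groups to $\Q^\sigma$, so it too is a unital Kirchberg algebra satisfying the UCT with $K_0 \cong K_1 \cong \mathbb{Z}$. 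For the class of the unit, Theorem \ref{thm:fixed_sym} supplies isometries $T, V \in \Q^\sigma$ with $T^* T = V^* V = TT^* + VV^* = 1$; the same doubling argument yields $[1_{\Q^\sigma}] = 2[1_{\Q^\sigma}] = 0$, and a second application of Kirchberg--Phillips gives $\Q^\sigma \cong \Q$.

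The substantive technical content has already been carried out in Proposition \ref{prop:ktheory_sym} and Theorem \ref{thm:fixed_sym}; beyond these the proof is a routine verification of the hypotheses of the classification theorem. The only point requiring a moment of thought is fullness of the projection $e \in \Q \rtimes_\sigma \mathbb{Z}_2$, and this is automatic from simplicity, so no genuine obstacle remains.
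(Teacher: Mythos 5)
Your proposal is correct and follows essentially the same route as the paper: realize $\Q^\sigma$ as the full corner $e(\Q \rtimes_\sigma \mathbb{Z}_2)e$ with $e = \tfrac{1}{2}(1+W)$ (the paper cites \cite[Proposition 3.4]{Rieffel80} for this), transfer the Kirchberg/UCT/$K$-theoretic data from Proposition \ref{prop:ktheory_sym} by Morita equivalence, and conclude by Kirchberg--Phillips. The only difference is that you explicitly verify $[1]=0$ in each $K_0$ via the doubling relations, a hypothesis of the unital classification theorem that the paper's proof leaves implicit; this is a worthwhile addition but not a different argument.
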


\begin{proof}
It follows from \cite[Proposition 3.4]{Rieffel80} and the simplicity of the crossed product that $\Q^\sigma$ is Morita equivalent to $\Q \rtimes_\sigma \mathbb{Z}_2$. So Proposition \ref{prop:ktheory_sym} shows that $\Q^\sigma$, $\Q \rtimes_\sigma \mathbb{Z}_2$, and $\Q$ have the same $K$-theory. Since all are unital Kirchberg algebras satisfying the UCT, the Kirchberg–Phillips classification theorem implies  
$\Q \rtimes_{\sigma} \mathbb{Z}_2\cong \Q\cong \Q^\sigma$. 
\end{proof}




\subsection{Maximal $C^*$-subalgebras related to $\mathcal{O}_2 \subset \Q$}

From Theorem \ref{thm:maximality_of_cuntz} and \ref{T:3iso}, one naturally wonders if \(\C \rtimes_{\sigma} \mathbb{Z}_2\) is maximal in \(\Q \rtimes_{\sigma} \mathbb{Z}_2\), and asks the same question about \(\C^\sigma\) in \(\Q^\sigma\). The main goal of this subsection is to show that this is indeed the case. 



\begin{prop}\label{prop:composition}
Suppose \( \phi_1, \phi_2 \in \operatorname{End}(\Q) \) with \( \phi_2 \) a \(^*\)-automorphism and  \( \phi_1|_{\BB} = \phi_2|_{\BB} \). Then there exists \( t \in \mathbb{T} \) such that \( \phi_1 = \phi_2 \circ \alpha_t\), where
$\alpha$ is the gauge action of $\mathbb{T}$ on $\Q$.
\end{prop}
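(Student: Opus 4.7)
The plan is to reduce the statement to showing: every endomorphism $\psi \in \operatorname{End}(\Q)$ with $\psi|_\BB = \mathrm{id}_\BB$ is of the form $\alpha_t$ for some $t \in \mathbb{T}$. Setting $\psi \coloneqq \phi_2^{-1} \circ \phi_1$, the hypothesis $\phi_1|_\BB = \phi_2|_\BB$ gives $\psi|_\BB = \mathrm{id}_\BB$ (in particular, $\psi$ is unital), and the conclusion $\phi_1 = \phi_2 \circ \alpha_t$ is then immediate.

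To analyze such $\psi$, I would first study $v \coloneqq \psi(s_2)$, which is an isometry since $\psi$ is unital. Because $s_2 s_2^* \in \D \subset \BB$ is $\psi$-fixed, $vv^* = s_2 s_2^*$; writing $v = s_2 s_2^* v = s_2 x$ with $x \coloneqq s_2^* v$, a direct computation using $v^* v = 1$ and $vv^* = s_2 s_2^*$ yields $x^* x = x x^* = 1$, so $x \in \Q$ is a unitary. Applying $\psi$ to relation (I) gives $v u = u^2 v$; substituting $v = s_2 x$ and using $u^2 s_2 = s_2 u$ (also from (I)) yields $s_2 x u = s_2 u x$, whence $x u = u x$.

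Next, for each $n \ge 1$, $\psi$-invariance of $s_{2^n} s_{2^n}^* \in \BB$ gives $v^n v^{*n} = s_{2^n} s_{2^n}^*$. Expanding
\[
v^{n+1} v^{*(n+1)} = v (v^n v^{*n}) v^* = s_2 \, x \, s_{2^n} s_{2^n}^* \, x^* \, s_2^*,
\]
equating with $s_{2^{n+1}} s_{2^{n+1}}^* = s_2 \, s_{2^n} s_{2^n}^* \, s_2^*$, and cancelling the outer $s_2, s_2^*$ via $s_2^* s_2 = 1$, one gets $x \, s_{2^n} s_{2^n}^* \, x^* = s_{2^n} s_{2^n}^*$, i.e.\ $x$ commutes with $s_{2^n} s_{2^n}^*$. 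Combined with $x u = u x$ and the description of $\D$ in Remark \ref{rmk:form_adic}, $x$ commutes with all of $\D$.

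Finally, since $\D$ is a Cartan (hence maximal abelian) subalgebra of $\Q$, $x \in \D$. Under the identification $\D \cong C(\z)$, the correspondence $u^m s_{2^n} s_{2^n}^* u^{-m} \leftrightarrow \mathbf{1}_{m + 2^n \z}$ (as used in the proof of Lemma \ref{lem:partial_car}) shows that conjugation by $u$ implements the $2$-adic odometer on $\z$, a minimal action; hence the only $u$-invariant continuous functions on $\z$ are constants, so $x = t \cdot 1$ for some $t \in \mathbb{T}$. Then $\psi(s_2) = t s_2 = \alpha_t(s_2)$ and $\psi(u) = u = \alpha_t(u)$, and since $\Q = C^*(s_2, u)$, $\psi = \alpha_t$. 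I expect the main obstacle to be the commutation step $x \, s_{2^n} s_{2^n}^* \, x^* = s_{2^n} s_{2^n}^*$, where the relations of $\Q$ must be carefully exploited; by contrast, the two concluding ingredients (maximality of $\D$ inside $\Q$ and minimality of the $2$-adic odometer) are standard.
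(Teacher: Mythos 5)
Your proof is correct, but it takes a genuinely different route from the paper's. Both arguments begin with the same reduction to $\psi\coloneqq\phi_2^{-1}\circ\phi_1$ satisfying $\psi|_{\BB}=\mathrm{id}_{\BB}$; from there the paper simply cites two external results, namely \cite[Theorem 6.14]{ACR18} (an endomorphism of $\Q$ fixing $u$ is automatically surjective, hence an automorphism) and \cite[Proposition 3.4]{ACR18b} (an automorphism of $\Q$ fixing $\BB$ pointwise is a gauge automorphism), whereas you give a self-contained computation. Your chain of deductions checks out: $vv^*=\psi(s_2s_2^*)=s_2s_2^*$ forces $v=s_2x$ with $x=s_2^*v$ unitary; applying $\psi$ to relation (I) and cancelling yields $xu=ux$; the identity $v^nv^{*n}=s_{2^n}s_{2^n}^*$ gives commutation of $x$ with each $s_{2^n}s_{2^n}^*$, hence with all of $\D$; then $x\in\D$ because $\D$ is a Cartan (in particular maximal abelian) subalgebra of $\Q$, and $u$-invariance together with minimality of the $2$-adic odometer on $\z$ forces $x=t\cdot 1$. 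Note that your argument never needs to know in advance that $\psi$ is surjective --- that falls out at the end once $\psi=\alpha_t$ on the generators --- so you are in effect reproving the endomorphism version of \cite[Proposition 3.4]{ACR18b} from scratch. What the paper's route buys is brevity given the existing literature; what yours buys is independence from those two citations, at the cost of relying on the masa property of $\D$ and the standard identification of $\mathrm{Ad}(u)|_{\D}$ with translation by $1$ on $\z$ (both of which are recorded or implicit in Subsection \ref{SS:Q2basics} and Lemma \ref{lem:partial_car}, so this is legitimate).
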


\begin{proof}
Notice that \( \phi_2^{-1} \circ \phi_1(x) = x \) for all \( x \in \BB \). Using that \( \phi_2^{-1} \circ \phi_1(u) = u \) and applying \cite[Theorem 6.14]{ACR18}, we conclude that \( \phi_2^{-1} \circ \phi_1 \) is an automorphism of \(\Q\) that fixes \(\BB\). Consequently, \( \phi_2^{-1} \circ \phi_1 \) is a gauge automorphism \( \alpha_t \) for some \( t \in \mathbb{T} \), by \cite[Proposition 3.4]{ACR18b}. Therefore, \( \phi_1 = \phi_2 \circ \alpha_t \).
\end{proof}
\begin{theo}\label{thm:max_bunce_sym}
Let $\overline{\sigma}$ be the \(^*\)-automorphism of \(\B\) induced by restricting $\sigma$ to $\BB$ and identifying $\BB$ with $\B$. Then $\M \rtimes_{\overline{\sigma}} \mathbb{Z}_2$ is a maximal $C^*$-subalgebra of $\B \rtimes_{\overline{\sigma}} \mathbb{Z}_2$.
\end{theo}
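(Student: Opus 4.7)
The plan is to apply Theorem~\ref{thm:pointwise_finite_inclusion_criteria} to the $\overline{\sigma}$-invariant inclusion $\M\subset\B$ with the $\mathbb{Z}_2$-action $\overline{\sigma}$, and then combine the description of intermediate subalgebras it provides with the maximality of $\M$ in $\B$ from Theorem~\ref{thm:maximality_of_car}. Since $\mathbb{Z}_2$ is finite and hence trivially has the AP, the hypothesis to verify for Theorem~\ref{thm:pointwise_finite_inclusion_criteria} is that $\overline{\sigma}$ acts on $\M\subset\B$ with finite relative Rokhlin dimension. By Remark~\ref{rmk:rokhlin_relative}(iii), this follows from two items: (a) that $\overline{\sigma}|_\M$ has finite Rokhlin dimension as a $\mathbb{Z}_2$-action, and (b) that the inclusion $\M\subset\B$ induces a well-defined $^*$-homomorphism $\widetilde{i}:F(\M)\to F(\B)$.

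For (a), I will work in the standard tensor-product picture $\M\cong\bigotimes_{k=1}^\infty M_2(\mathbb{C})$ coming from the matrix units $s_is_j^*$, under which $\overline{\sigma}|_\M$ becomes the product-type automorphism $\bigotimes_{k=1}^\infty\mathrm{Ad}(\sigma_1)$ with $\sigma_1=\left(\begin{smallmatrix}0&1\\1&0\end{smallmatrix}\right)$. Placing the matrix units $e_{11}$ and $e_{22}$ in a sufficiently late tensor factor produces, for every large $n$, an orthogonal pair $(f_0^{(n)},f_1^{(n)})$ in $\M$ satisfying $f_0^{(n)}+f_1^{(n)}=1$, $\overline{\sigma}(f_0^{(n)})=f_1^{(n)}$, and asymptotic centrality in $\M$. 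Hence $\overline{\sigma}|_\M$ has the Rokhlin property, and in particular Rokhlin dimension at most $1$.

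For (b), I will adapt the stabilization argument used in the proof of Theorem~\ref{thm:maximality_of_cuntz}. Through the $^*$-isomorphism $\psi:\overset{\longrightarrow}{\BB}\to\K\otimes\B$ that restricts to $\overset{\longrightarrow}{\F}\to\K\otimes\M$, together with a full projection $p\in\K\otimes\M$ for which $p(\K\otimes\M)p\cong\M$ and $p(\K\otimes\B)p\cong\B$, the inclusion $\M\subset\B$ is realized as a full-corner compression of $\K\otimes\M\subset\K\otimes\B$. In this stabilized setting, a tensor-product-type argument in the spirit of Example~\ref{ex:sequence_algebra_bunce} yields a well-defined $F(\K\otimes\M)\to F(\K\otimes\B)$. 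Combining this with the canonical isomorphisms $F(\M)\cong F(\K\otimes\M)$ and $F(\B)\cong F(\K\otimes\B)$ provided by the full corners (as in the proof of Lemma~\ref{lem:sequence_algebra_stablization}(ii)) and the compatibility of the inclusions with these isomorphisms, one descends to a well-defined $\widetilde{i}:F(\M)\to F(\B)$.

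With (a) and (b) in hand, Theorem~\ref{thm:pointwise_finite_inclusion_criteria} identifies any intermediate $C^*$-algebra $\M\rtimes_{\overline{\sigma}}\mathbb{Z}_2\subset C\subset\B\rtimes_{\overline{\sigma}}\mathbb{Z}_2$ as $B_1\rtimes_{\overline{\sigma}}\mathbb{Z}_2$ for some $\overline{\sigma}$-invariant subalgebra $\M\subset B_1\subset\B$, and Theorem~\ref{thm:maximality_of_car} then forces $B_1\in\{\M,\B\}$, giving the claimed maximality. The main obstacle I anticipate is item (b): showing that central sequences in the CAR algebra $\M$ remain central in the Bunce--Deddens algebra $\B$, and that annihilating sequences annihilate all of $\B$, is not automatic because $\B$ is not a tensor product of $\M$ with a unital algebra. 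The stabilization detour reduces this to the tensor-product setting covered by Example~\ref{ex:sequence_algebra_bunce}, but some care is required to match the inclusion $\M\subset\B$ with its stabilized counterpart and to descend correctly through the full-corner identifications.
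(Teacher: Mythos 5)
Your overall skeleton (Theorem \ref{thm:pointwise_finite_inclusion_criteria} combined with Theorem \ref{thm:maximality_of_car}) matches the paper's, but your verification of the relative Rokhlin hypothesis has a genuine gap, and it sits exactly at the point you flagged. The towers you build in step (a), $f_0^{(n)}=1^{\otimes(n-1)}\otimes e_{11}\otimes 1\otimes\cdots$ and $f_1^{(n)}=1^{\otimes(n-1)}\otimes e_{22}\otimes 1\otimes\cdots$, are asymptotically central in $\M$ but \emph{not} in $\B$, whereas condition (iv) of Definition \ref{defn:Rokhlin_dim_inclusion}(ii) requires approximate commutation with finite subsets of the larger algebra. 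Concretely, under the identification of $\D$ with $C(\z)$ and of the canonical unitary $u\in\BB$ with the odometer $x\mapsto x+1$, the projection $f_i^{(n)}$ is the indicator function of the cylinder set prescribing the $n$-th binary digit; adding $1$ flips that digit precisely on the nonempty clopen set where a carry propagates to position $n$, so $\|[f_i^{(n)},u]\|=1$ for every $n$. The same computation shows that your item (b) is false as stated: the class of $(f_0^{(n)})_n$ lies in $\M^c$ but its image fails to lie in $\B^c$, so the representative-wise map $\widetilde{i}:F(\M)\to F(\B)$ is not well defined and Remark \ref{rmk:rokhlin_relative}(iii) is not available for this inclusion in the way you invoke it. The stabilization detour cannot repair this: $\K\otimes\M\subset\K\otimes\B$ is still not of the form $C\otimes 1_D\subset C\otimes D$ (here $\B$ is the Bunce--Deddens algebra, not the tensor product $\M\otimes C(\mathbb{T})$; their $K_1$-groups already differ), and the full-corner isomorphisms of Lemma \ref{lem:sequence_algebra_stablization}(ii) would transport any stabilized $\widetilde{i}$ back down to $F(\M)\to F(\B)$, contradicting the commutator computation above.

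The paper takes a different route precisely in order to produce Rokhlin data that are approximately central in $\B$ from the outset: it first proves that $\sigma|_{\BB}$ is outer, using Proposition \ref{prop:composition} together with the gauge action and results of \cite{ACR18, ACR18b}, deduces that $\overline{\sigma}$ is an outer automorphism of $\B$, and then invokes \cite[Theorem 2.3]{BEMSW15} to obtain Rokhlin dimension at most $1$ before passing to the relative version. Your proposal omits the outerness argument entirely, and the explicit product-type towers cannot substitute for it. Note moreover that no repair is possible within $\D$: the odometer action is minimal, so there is no nontrivial asymptotically $u$-invariant family of orthogonal diagonal projections. Producing towers in $\M$ that approximately commute with $u$ is where the real work of this theorem lies, and that step is missing from your argument.
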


\begin{proof}
First, we claim that \(\sigma|_{\BB} \) is an outer automorphism. Suppose otherwise. Then there exists a unitary \( v \in \BB \) such that the restriction of \( \phi_1 \coloneqq \operatorname{Ad}(v) : \Q \to \Q \) to \( \BB \) agrees with \( \sigma|_{\BB} \). By Proposition \ref{prop:composition}, there exists \( t \in \mathbb{T} \) such that \( \phi_1 = \sigma \circ \alpha_t \). Note that
\[
\phi_1(u) = \sigma \circ \alpha_t(u)  = \sigma(u) = u^*.
\]
By \cite[Theorem 5.9]{ACR18}, \( \phi_1 \) is an outer automorphism, a contradiction. Hence, the claim follows.

Consequently, \(\overline{\sigma}\) is an outer \(^*\)-automorphism since \(\sigma|_{\BB}\) is. It now follows from \cite[Theorem 2.3]{BEMSW15} that \(\overline{\sigma}\) has Rokhlin dimension at most \(1\). Using Example \ref{ex:sequence_algebra_bunce} and Remark \ref{rmk:rokhlin_relative}(ii), we conclude that \(\overline{\sigma}\) has pointwise finite relative Rokhlin dimension. Combining Theorem \ref{thm:maximality_of_car} and Theorem \ref{thm:pointwise_finite_inclusion_criteria} yields the desired conclusion.
\end{proof}

Next, we realize $\mathcal{F}$ (defined in \eqref{eqn:guage_algebra_sym}) as a crossed product $C^*$-algebra (see \cite[Lemma 2.6]{Hirshberg2002} for a more general setting) and single out an important $C^*$-subalgebra of $\mathcal{F}$.

\begin{theo}\label{thm:bunce_sym}
Let 
\begin{equation}
\mathcal{H} \coloneqq \overline{\operatorname{span}} \left\{ \lambda_{(m_1, 0)} S^n S^{*n} \lambda_{(-m_2, k_2)} : k_2\in \mathbb{Z}_2,\ 0 \le m_i < 2^n,\ n \in \mathbb{N} \right\}.
\end{equation}
Then \(\mathcal{H}\) is a maximal \(C^*\)-subalgebra of \(\mathcal{F}\) and there is a \(^*\)-isomorphism \(\xi : \mathcal{F} \to \B \rtimes_{\overline{\sigma}} \mathbb{Z}_2\) such that the following diagram
\begin{equation}\label{diag:bunce_symm}
\begin{tikzcd}
\mathcal{H} \arrow[r, hook] \arrow[d, "{\Phi}"', "\cong" right] & \mathcal{F} \arrow[d, "{\Phi}", "\cong" left] \\
\M \rtimes_{\overline{\sigma}} \mathbb{Z}_2 \arrow[r, hook] & \B \rtimes_{\overline{\sigma}} \mathbb{Z}_2
\end{tikzcd}
\end{equation}
commutes,
where \(\overline{\sigma} \in \operatorname{Aut}(\B)\) is the \(^*\)-automorphism induced by restricting \(\sigma\) to \(\BB\).
\end{theo}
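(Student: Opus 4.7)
The plan is to identify $\mathcal{F}$ with $\BB \rtimes_{\overline{\sigma}} \mathbb{Z}_2$ by exploiting the realization $\mathcal{E}_\pi \cong \Q \rtimes_{\sigma} \mathbb{Z}_2$ from Proposition \ref{prop:crossed_alternative}, and then to match $\mathcal{H}$ with $\F \rtimes_{\overline{\sigma}} \mathbb{Z}_2$ under this identification. Once this is done, composing with the identification $\BB \cong \B$ (which carries $\F$ onto $\M$) gives the desired $\xi$, the diagram commutes by construction, and the maximality of $\mathcal{H}$ in $\mathcal{F}$ transfers directly to the maximality statement of Theorem \ref{thm:max_bunce_sym}.

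First I would construct the isomorphism. Under the identification from Proposition \ref{prop:crossed_alternative} (with $S = s_2$, $\lambda_{(1,0)} = u$, $\lambda_{(0,1)} = W$), the gauge action $\gamma$ on $\mathcal{E}_\pi$ coincides with the canonical extension of the gauge action $\alpha$ on $\Q$ to $\Q \rtimes_{\sigma} \mathbb{Z}_2$ which fixes $W$; this extension exists since $\sigma$ commutes with $\alpha$, as one checks on the generators ($\sigma\alpha_t(u) = u^* = \alpha_t \sigma(u)$ and $\sigma\alpha_t(s_2) = t s_1 = \alpha_t \sigma(s_2)$). Since $\BB$ is $\sigma$-invariant, the universal property yields a $^*$-homomorphism
\[
\Psi : \BB \rtimes_{\overline{\sigma}} \mathbb{Z}_2 \longrightarrow \Q \rtimes_{\sigma} \mathbb{Z}_2
\]
whose image lies in $\mathcal{F}$. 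Conversely, any $\gamma$-fixed element decomposes uniquely as $a_0 + a_1 W$ with $a_0, a_1 \in \Q$ necessarily gauge-fixed, hence in $\BB$, so $\Psi$ maps onto $\mathcal{F}$. For injectivity I would apply the standard conditional-expectation argument for crossed products by the finite group $\mathbb{Z}_2$: if $\Psi(x) = 0$, then applying the faithful canonical expectation $E : \Q \rtimes_{\sigma} \mathbb{Z}_2 \to \Q$ to $\Psi(x^*x)$ and noting that $E \circ \Psi = \Psi \circ \widetilde{E}$ (where $\widetilde{E}$ is the analogous expectation on $\BB \rtimes_{\overline{\sigma}} \mathbb{Z}_2$) together with the injectivity of $\Psi|_{\BB}$ gives $\widetilde{E}(x^*x) = 0$ and hence $x = 0$.

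Next, I would verify that $\Psi^{-1}(\mathcal{H})$ corresponds to $\F \rtimes_{\overline{\sigma}} \mathbb{Z}_2$. A generator $\lambda_{(m_1,0)} S^n S^{*n} \lambda_{(-m_2,k_2)}$ of $\mathcal{H}$ corresponds to $u^{m_1} s_{2^n} s_{2^n}^* u^{-m_2} W^{k_2}$. The constraint $0 \le m_i < 2^n$ places $u^{m_1} s_{2^n} s_{2^n}^* u^{-m_2}$ in $\F$ by \eqref{eqn:defn_car}, so the generator lies in $\F$ when $k_2 = 0$ and in $\F \cdot W$ when $k_2 = 1$. Thus $\xi(\mathcal{H}) \subseteq \M \rtimes_{\overline{\sigma}} \mathbb{Z}_2$ after passing through $\BB \cong \B$. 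The reverse inclusion is immediate, since every element of $\F \rtimes_{\overline{\sigma}} \mathbb{Z}_2$ is a norm-limit of sums of precisely such elements. The commutativity of the diagram \eqref{diag:bunce_symm} is then tautological from the construction of $\xi$.

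Finally, the maximality of $\mathcal{H}$ in $\mathcal{F}$ is transported through $\xi$ to the maximality of $\M \rtimes_{\overline{\sigma}} \mathbb{Z}_2$ in $\B \rtimes_{\overline{\sigma}} \mathbb{Z}_2$, which is precisely Theorem \ref{thm:max_bunce_sym}. The main technical point I expect is the injectivity of $\Psi$; this is what guarantees that $\xi$ is a genuine isomorphism rather than merely a surjection, and it is handled cleanly by the conditional-expectation argument because $\mathbb{Z}_2$ is amenable. The remaining steps are essentially bookkeeping on generators.
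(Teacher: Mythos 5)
Your proposal is correct, but it reaches the isomorphism by a genuinely different route than the paper. The paper never touches the gauge action on $\mathcal{E}_\pi$ at this stage: it builds an explicit $^*$-isomorphism $\Psi : C(\z) \rtimes_{\alpha} G \to \mathcal{F}$ for the affine action of $G = \mathbb{Z} \rtimes \mathbb{Z}_2$ on the $2$-adic integers (injectivity coming from simplicity of that crossed product, since the action is minimal and free), shows that the partial subaction $\beta$ with domains $U_{(n,\epsilon)}$ carves out exactly $\mathcal{H}$, and then invokes Proposition \ref{prop:semidirect_partial} to decompose $C(\z) \rtimes_{\alpha} G \cong (C(\z)\rtimes_\alpha \mathbb{Z}) \rtimes_\gamma \mathbb{Z}_2$ compatibly with the partial subalgebra, before identifying the inner layer with $\B \supset \M$ via Lemma \ref{lem:partial_car}. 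You instead work inside $\Q \rtimes_\sigma \mathbb{Z}_2 \cong \mathcal{E}_\pi$, observe that the gauge action of $\mathbb{T}$ extends (fixing $W$, since $\sigma$ and $\alpha_t$ commute), identify $\mathcal{F}$ with its fixed-point algebra $\BB + \BB W \cong \BB \rtimes_{\sigma|_{\BB}} \mathbb{Z}_2$, and match $\mathcal{H}$ with $\F + \F W$ by inspecting generators; injectivity is handled by the standard faithful-conditional-expectation argument for crossed products by a finite group. Both routes then quote Theorem \ref{thm:max_bunce_sym} to transport maximality. Your argument is shorter and avoids partial actions entirely, at the cost of relying on Proposition \ref{prop:crossed_alternative} and the Fourier decomposition $x = E(x) + E(xW)W$; the paper's route keeps the $2$-adic dynamical picture explicit and reuses the machinery of Lemma \ref{lem:partial_car} and Proposition \ref{prop:semidirect_partial}. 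The only points you leave implicit — that $\F$ is $\sigma$-invariant (so that $\F \rtimes_{\sigma|_\F}\mathbb{Z}_2$ makes sense and corresponds to $\M \rtimes_{\overline{\sigma}}\mathbb{Z}_2$), and that the Fourier coefficients of a gauge-fixed element are themselves gauge-fixed — are routine checks, so I see no genuine gap.
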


\begin{proof}
Recall from that \eqref{E:G} that \(G=\mathbb{Z} \rtimes \mathbb{Z}_2 \). Define an action
$$
\alpha : G\curvearrowright \z
$$
by
\begin{equation}
 \alpha_{(n, \epsilon)}(x) \coloneqq 
\begin{cases} 
x + n & \text{if } \epsilon = 0, \\
- (x + 1) + n & \text{if } \epsilon = 1.
\end{cases}
\end{equation}
for all $(n, \epsilon) \in G$ and $x \in \z$. It is easy to check that this action is minimal and free. Hence, \(C(\z) \rtimes_{\alpha} G\) is simple.

The map \( \Psi : C(\z) \to \mathcal{F} \) defined by
\[
\Psi\left( \mathbf{1}_{m + 2^n \z} \right) = \lambda_{(m,0)} S^n S^{*n} \lambda_{(-m,0)}
\]
is a \(^*\)-homomorphism. It is not hard to check that 
\[
\lambda_{(1,0)} \Psi\left( \mathbf{1}_{m + 2^n \z} \right) \lambda_{(-1,0)} = \Psi\left( \mathbf{1}_{(m+1) + 2^n \z} \right)
\]
and
\[
\lambda_{(0,1)} \Psi\left( \mathbf{1}_{m + 2^n \z} \right) \lambda_{(0,1)} = \Psi\left( \mathbf{1}_{-{(m+1)} + 2^n \z} \right).
\]
By the universal property of the crossed product \(C(\z) \rtimes_{\alpha} G\), \(\Psi\) extends to an injective \(^*\)-homomorphism
\begin{equation}\label{eqn:iso_bunce_symm}
\Psi : C(\z) \rtimes_{\alpha} G \to \mathcal{F}.    
\end{equation}
Also, $\Psi$ is surjective since for any $\lambda_{(m_1,0)} S^n S^{*n} \lambda_{(m_2, \epsilon_2)} \in \mathcal{F}$, we have
$$
\Psi\big( \xi_{(m_1, 0)} \mathbf{1}_{2^n \z} \xi_{(m_2, \epsilon_2)} \big) = \lambda_{(m_1, 0)} S^n S^{*n} \lambda_{(m_2, \epsilon_2)},
$$
where $\xi$ is the left regular representation of $G$ associated to the crossed product $C(\z)\rtimes_{\alpha} G$. 
Therefore $\Psi$ is a \(^*\)-isomorphism.

Let
\begin{equation*}
U_{(n,\epsilon)} \coloneqq
\begin{cases}
\z & \text{if } n = 0, \\
\z \setminus \{0, 1, \ldots, n-1\} & \text{if } n > 0, \\
\z \setminus \{n, n+1, \ldots, -1\} & \text{if } n < 0.
\end{cases}
\end{equation*}
Define \(\beta_{(n,\epsilon)} \coloneqq \alpha_{(n,\epsilon)}|_{U_{(-n,\epsilon)}}\). Then \(\beta : G \curvearrowright \z\) is a partial subaction of \(\alpha\). 
Note that
\begin{equation*}
 U_{(n,\epsilon)} = \bigcup_{j=1}^{\infty} \bigcup_{k=0}^{2^j -1} (k + n + 2^j \z) = \bigcup_{j=1}^\infty \bigcup_{\substack{n \le \ell \le 2^j -1}} (\ell + 2^j \z)  
\end{equation*}
and 
\[
C(\z) \rtimes_{\beta} G = \overline{\operatorname{span}}\{f_g \xi_g : g \in G,\ f_g \in C_0(U_g)\}.
\]
For \(n \le \ell \le 2^j - 1\),
\[
\Psi(\mathbf{1}_{(\ell + 2^j \z)} \xi_{(n,\epsilon)}) = \lambda_{(\ell,0)} S^j S^{*j} \lambda_{(-\ell,0)} \lambda_{(n,\epsilon)} = \lambda_{(\ell,0)} S^j S^{*j} \lambda_{(-\ell +n,\epsilon)} \in \mathcal{H}.
\]
Thus \(\Psi ( C(\z) \rtimes_{\beta} G) \subset \mathcal{H}\). If \(\lambda_{( m_1, 0)} S^j S^{*j} \lambda_{(-m_2, \epsilon_2)} \in \mathcal{H}\) with \(0 \le m_i \le 2^j - 1\), then
\begin{align*}
\lambda_{( m_1, 0)} S^j S^{*j} \lambda_{(-m_2, \epsilon_2)}
&=  \left( \lambda_{(m_1, 0)} S^j S^{*j} \lambda_{(-m_1, 0)} \right) \lambda_{(m_1 - m_2, \epsilon_2)} \\
&= \Psi\left(\mathbf{1}_{m_1 + 2^j \z} \xi_{(m_1 - m_2, \epsilon_2)} \right).
\end{align*}
Since \(m_1 - m_2 \le m_1\),  
\[
\mathbf{1}_{m_1 + 2^j \z} \xi_{( m_1 - m_2, \epsilon_2)} \in C(\z) \rtimes_{\beta} G.
\]
Hence 
$
\lambda_{(m_1, 0)} S^j S^{*j} \lambda_{(-m_2, \epsilon_2)} \in \Psi(C(\z) \rtimes_{\beta} G).
$
Therefore
\begin{equation}\label{eqn:restriction_bunce_sym}
\Psi(C(\z) \rtimes_{\beta} G) = \mathcal{H}.
\end{equation}
Using Proposition \ref{prop:semidirect_partial},  we get a \(^*\)-isomorphism
\[
C(\z) \rtimes_{\alpha} G \cong (C(\z) \rtimes_{\alpha} \mathbb{Z}) \rtimes_{\gamma} \mathbb{Z}_2 \] which restricts to a \(^*\)-isomorphism \[ \quad C(\z) \rtimes_{\beta} G \cong (C(\z) \rtimes_{\beta} \mathbb{Z}) \rtimes_{\gamma} \mathbb{Z}_2,
\]
where \(\gamma : C(\z) \rtimes_{\alpha} \mathbb{Z} \to C(\z) \rtimes_{\alpha} \mathbb{Z}\), 
\(\mathbf{1}_{m + 2^s \z} \xi_{(n,0)}\mapsto \mathbf{1}_{-(m + 1) + 2^s \z} \xi_{(-n,0)}\), is a \(^*\)-automorphism of order \(2\). 

Identify \(C(\z) \rtimes_{\alpha} \mathbb{Z}\) and \(C(\z) \rtimes_{\beta} \mathbb{Z}\) with \(\B\) and \(\M\), respectively, as in Lemma \ref{lem:partial_car}. By the definition of \(\gamma\), it corresponds to a \(^*\)-automorphism \(\overline{\sigma}\) of \(\B\), induced by restricting \(\sigma\) to \(\BB\) and identifying \(\BB\) with \(\B\). Using the identifications above, together with \eqref{eqn:iso_bunce_symm} and \eqref{eqn:restriction_bunce_sym}, we obtain a \(^*\)-isomorphism 
$\Phi: \mathcal F\to \B \rtimes_{\overline{\sigma}} \mathbb{Z}_2$ such that 
that the commutative diagram in \eqref{diag:bunce_symm} holds. The maximality of the inclusion \(\mathcal{H} \subset \mathcal{F}\) then follows from Theorem \ref{thm:max_bunce_sym}.
\end{proof}

Using the universal property of $\C \rtimes_{\sigma} \mathbb{Z}_2$ and the property of \(W\), it is easy to see that the $C^*$-subalgebra $C^*(s_1, s_2, W)$ of $\Q \rtimes_{\sigma} \mathbb{Z}_2$ is $^*$-isomorphic to $\C \rtimes_{\sigma} \mathbb{Z}_2$. We refer to it as the canonical copy of $\C \rtimes_{\sigma} \mathbb{Z}_2$ inside $\Q \rtimes_{\sigma} \mathbb{Z}_2$, and still denote it by $\C \rtimes_{\sigma} \mathbb{Z}_2$. Now, we realize both \(\C \rtimes_{\sigma}\mathbb{Z}_2\) and \(\Q\rtimes_{\sigma}\mathbb{Z}_2\) as corners of crossed product \(C^*\)-algebras.

\begin{prop}\label{prop:cuntz_inclusion_sym}
Let \(\phi: \Q \rtimes_{\sigma} \mathbb{Z}_2 
\to p\big(\overset{\longrightarrow}{\mathcal{F}} \rtimes_{\overline{\varphi}} \mathbb{Z}\big)p\) 
be the \(^*\)-isomorphism defined in \eqref{eqn:crossed_identified_sym}. 
Then the restriction of \(\phi\) to \(\C \rtimes_{\sigma} \mathbb{Z}_2\) 
is a \(^*\)-isomorphism onto 
\(p\big(\overset{\longrightarrow}{\mathcal{H}} \rtimes_{\overline{\varphi}} \mathbb{Z}\big)p\).
\end{prop}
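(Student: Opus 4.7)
The plan is to mimic the proof of Proposition \ref{prop:cuntz_inclusion}, working now with the inclusion \(\mathcal{H} \subset \mathcal{F}\) in place of \(\F \subset \BB\). Two ingredients are essential: (a) a Stacey-type identification \(p(\overset{\longrightarrow}{\mathcal{H}} \rtimes_{\overline{\varphi}} \mathbb{Z})p = C^*(\varphi^{(1)}(\mathcal{H}), T)\) applied to the restriction \(\varphi|_\mathcal{H}\); and (b) a verification that the images of the canonical generators \(s_1, s_2, W\) of \(\C \rtimes_\sigma \mathbb{Z}_2\) under \(\phi\) are captured by this generating set. Injectivity of the restriction will be automatic, since \(\phi\) itself is a \(^*\)-isomorphism on \(\Q \rtimes_\sigma \mathbb{Z}_2\).

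First I would verify that \(\varphi(a) = SaS^*\) restricts to a \(^*\)-endomorphism of \(\mathcal{H}\). A direct computation using \(S\lambda_x = \lambda_{\pi(x)} S\) (hence \(\lambda_x S^* = S^* \lambda_{\pi(x)}\)) yields
\[
\varphi\bigl(\lambda_{(m_1, 0)} S^n S^{*n} \lambda_{(-m_2, k_2)}\bigr) = \lambda_{(2m_1, 0)} S^{n+1} S^{*(n+1)} \lambda_{(-(2m_2 + k_2),\, k_2)},
\]
which belongs to \(\mathcal{H}\) because \(0 \le 2m_1 < 2^{n+1}\) and \(0 \le 2m_2 + k_2 < 2^{n+1}\). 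Therefore \(\overset{\longrightarrow}{\mathcal{H}} := \overline{\bigcup_n \varphi^{(n)}(\mathcal{H})}\) is a \(\overline{\varphi}\)-invariant \(C^*\)-subalgebra of \(\overset{\longrightarrow}{\mathcal{F}}\). Since \(SS^* \in \mathcal{H}\) (take \(n = 1\), \(m_i = 0 = k_2\)), the projection \(p = \varphi^{(1)}(1) = \varphi^{(2)}(SS^*)\) lies in \(\overset{\longrightarrow}{\mathcal{H}}\), so the corner \(p(\overset{\longrightarrow}{\mathcal{H}} \rtimes_{\overline{\varphi}} \mathbb{Z})p\) is meaningful, and \cite[Proposition 3.3]{Stacey1993} applied to \((\mathcal{H}, \varphi|_\mathcal{H})\) gives
\[
p(\overset{\longrightarrow}{\mathcal{H}} \rtimes_{\overline{\varphi}} \mathbb{Z})p = C^*\bigl(\varphi^{(1)}(\mathcal{H}),\, T\bigr).
\]

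Next I would establish the two containments. Splitting the defining generators of \(\mathcal{H}\) by \(k_2 \in \{0,1\}\) exhibits \(\mathcal{H}\) as the closure of \(\F + \F W\); combined with \(\F \subset \C\) and \(W \in \C \rtimes_\sigma \mathbb{Z}_2\), this yields \(\mathcal{H} \subset \C \rtimes_\sigma \mathbb{Z}_2\). Since \(\phi|_{\mathcal{F}} = \varphi^{(1)}\) and \(T = \phi(s_2)\), the Stacey identification above gives \(p(\overset{\longrightarrow}{\mathcal{H}} \rtimes_{\overline{\varphi}} \mathbb{Z})p \subset \phi(\C \rtimes_\sigma \mathbb{Z}_2)\). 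For the reverse inclusion, it suffices to verify that the images of \(s_1, s_2, W\) lie in \(p(\overset{\longrightarrow}{\mathcal{H}} \rtimes_{\overline{\varphi}} \mathbb{Z})p\). Clearly \(\phi(s_2) = T\) and \(\phi(W) = \varphi^{(1)}(\lambda_{(0,1)})\) do, since \(1\) and \(\lambda_{(0,1)}\) both lie in \(\mathcal{H}\). For \(s_1 = u s_2 = \lambda_{(1,0)} S\), using \(a w = w \overline{\varphi}^{-1}(a)\) together with \(\overline{\varphi}^{-1} \circ \varphi^{(1)} = \varphi^{(2)}\) and \(p = \varphi^{(2)}(SS^*)\), one computes
\[
\phi(s_1) = \varphi^{(1)}(\lambda_{(1,0)}) \cdot wp = w\, \varphi^{(2)}\bigl(\lambda_{(1,0)} SS^*\bigr),
\]
and \(\lambda_{(1,0)} SS^* \in \mathcal{H}\) (take \(n = 1\), \(m_1 = 1\), \(m_2 = k_2 = 0\)), completing the check.

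The main obstacle is purely bookkeeping: one must carefully juggle the inductive-limit indices via \(\varphi^{(n)} \circ \varphi = \varphi^{(n-1)}\) and the covariance relation \(aw = w\overline{\varphi}^{-1}(a)\) so that words built from \(\{s_1, s_2, W\}\) land, under \(\phi\), in the required form \(w^k \cdot \varphi^{(n)}(h)\) with \(h \in \mathcal{H}\), rather than merely in \(\overset{\longrightarrow}{\mathcal{F}} \rtimes_{\overline{\varphi}} \mathbb{Z}\). Once the restriction \(\varphi|_\mathcal{H}\) and the containment \(\mathcal{H} \subset \C \rtimes_\sigma \mathbb{Z}_2\) are in hand, no genuinely new idea beyond that of Proposition \ref{prop:cuntz_inclusion} is required.
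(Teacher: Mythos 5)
Your proposal is correct and follows essentially the same route as the paper's proof: verify that \(\varphi(\mathcal{H})\subset\mathcal{H}\) so that the corner \(p\bigl(\overset{\longrightarrow}{\mathcal{H}}\rtimes_{\overline{\varphi}}\mathbb{Z}\bigr)p = C^*\bigl(\varphi^{(1)}(\mathcal{H}),T\bigr)\) makes sense, check that \(\phi(s_1),\phi(s_2),\phi(W)\) land in it, and use the Stacey-type generating description (as in Proposition \ref{prop:cuntz_inclusion}) for the reverse containment. The paper is terser at exactly the points you spell out (the computation \(\varphi(\lambda_{(m_1,0)}S^nS^{*n}\lambda_{(-m_2,k_2)})=\lambda_{(2m_1,0)}S^{n+1}S^{*(n+1)}\lambda_{(-(2m_2+k_2),k_2)}\) and the decomposition \(\mathcal{H}=\overline{\F+\F W}\)), but no new idea is involved.
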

\begin{proof}
Recall from \eqref{E:varphiF} that $\varphi : \mathcal{F} \to \mathcal{F}$ is a $^*$-endomorphism defined by $\varphi(a) = SaS^*$. Notice that $\varphi(\mathcal{H}) \subset \mathcal{H}$, and thus we have a $^*$-endomorphism $\varphi : \mathcal{H} \to \mathcal{H}$. Similarly, we have connecting maps $\varphi^{(n)} : \mathcal{H} \to \overset{\longrightarrow}{\mathcal{H}}$ and an induced \(^*\)-automorphism $\overline{\varphi} : \overset{\longrightarrow}{\mathcal{H}} \to \overset{\longrightarrow}{\mathcal{H}}$. Thus,
$$
p(\overset{\longrightarrow}{\mathcal{H}} \rtimes_{\overline{\varphi}} \mathbb{Z})p \subset p(\overset{\longrightarrow}{\mathcal{F}} \rtimes_{\overline{\varphi}} \mathbb{Z})p.
$$
Using \eqref{eqn:crossed_identified_sym}, we see that $\phi(\lambda_{(1,0)}S), \phi(S), \phi(\lambda_{(0,1)}) \in p(\overset{\longrightarrow}{\mathcal{H}} \rtimes_{\overline{\varphi}} \mathbb{Z})p$. Moreover, applying a similar argument as in Proposition \ref{prop:cuntz_inclusion}, along with the construction of \(p\left(\overset{\longrightarrow}{\mathcal{H}} \rtimes_{\overline{\varphi}} \mathbb{Z}\right)p\), we obtain
$\phi(\mathcal{O}_2 \rtimes_{\sigma} \mathbb{Z}_2) = p\left(\overset{\longrightarrow}{\mathcal{H}} \rtimes_{\overline{\varphi}} \mathbb{Z}\right)p.$
\end{proof}

As at the beginning of the proof of Proposition \ref{prop:cuntz_inclusion_sym}, \(\mathcal{H}\) is invariant under the \(^*\)-endomorphism \(\varphi\) in \eqref{E:varphiF}, similarly to \(\mathcal{F}_2\).  
In what follows, we will show that there is a mapping that intertwines these two restrictions.

\begin{prop}\label{prop:rokhlin_property_car_sym}
There exists a $^*$-isomorphism $\Theta: \mathcal{F}_2 \to \mathcal{H}$ such that $\Theta \circ \varphi|_{\mathcal{F}_2} = \varphi|_{\mathcal{H}} \circ \Theta$. Consequently, the $^*$-endomorphism $\varphi|_{\mathcal H}$ satisfies the Rokhlin property.
\end{prop}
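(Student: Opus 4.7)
The plan is to build $\Theta$ from the Choi--Latr\'emoli\`ere isomorphism $\mathcal{O}_2 \cong \mathcal{O}_2 \rtimes_\sigma \mathbb{Z}_2$ (\cite[Theorem 2.1]{CL09}), which on generators is given by $s_2 \mapsto s_2$ and $s_1 = us_2 \mapsto Ws_2$. In the notation of Remark~\ref{R:ideEpi}, this amounts to a $^*$-isomorphism $\Theta : \C \to \C \rtimes_\sigma \mathbb{Z}_2$ with $\Theta(S) = S$ and $\Theta(\lambda_{(1,0)} S) = \lambda_{(0,1)} S$. Because $\varphi|_{\mathcal F_2}(a) = SaS^*$ and $\Theta(S) = S$, the intertwining relation $\Theta \circ \varphi|_{\mathcal F_2} = \varphi|_{\mathcal H} \circ \Theta$ is immediate once we verify that $\Theta(\mathcal F_2) = \mathcal H$.

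To show $\Theta(\mathcal F_2) \subset \mathcal H$, I would use the description (cf.\ \cite[p.~77]{RS02}) that $\mathcal F_2 = C^*\bigl(\{\chi^n(B) : n \ge 0\}\bigr)$, where $\chi(x) = SxS^* + \lambda_{(1,0)} S x S^* \lambda_{(-1,0)}$ and $B = \overline{\operatorname{span}}\{\lambda_{(m,0)} SS^* \lambda_{(-k,0)} : 0 \le m,k \le 1\}$. Define $\overline{\chi} := \Theta \circ \chi \circ \Theta^{-1}$ on $\C \rtimes_\sigma \mathbb{Z}_2$; a direct calculation gives
\[
\Theta(B) = \{SS^*,\ \lambda_{(1,0)} SS^* \lambda_{(-1,0)},\ SS^* \lambda_{(0,1)},\ \lambda_{(1,0)} SS^* \lambda_{(-1,1)}\} \subset \mathcal H,
\]
and $\overline{\chi}$ preserves $\mathcal H$. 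Hence $\Theta(\chi^n(B)) = \overline{\chi}^n(\Theta(B)) \subset \mathcal H$ for every $n$, yielding the inclusion.

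The reverse inclusion $\mathcal H \subset \Theta(\mathcal F_2)$ is the main obstacle and must be proved by induction on $n$ for spanning elements of the form $\lambda_{(m,0)} S^n S^{*n} \lambda_{(-k,\epsilon)}$ with $0 \le m,k \le 2^n - 1$ and $\epsilon \in \mathbb Z_2$. For the base case $n=1$, one checks by direct computation that each of the four basic generators of $\mathcal H$ at level $1$ lies in $\Theta(\mathcal F_2)$, e.g.
\[
SS^*\lambda_{(-1,1)} = \Theta\bigl(S^2 S^{*2}\lambda_{(-2,0)} + \lambda_{(2,0)}S^2 S^{*2}\bigr),
\]
and similarly for the other three (this is where the specific choice of $\Theta$ on the odd/even coset representatives is crucial). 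For the inductive step, write $\lambda_{(m_1,0)} S^{n+1} S^{*(n+1)} \lambda_{(-m_2,\epsilon)}$ by splitting into the four cases determined by the parities of $m_1$ and $m_2$, and factor it as a product of an element of the form $Sz S^*$ with $z \in \mathcal H$ at level $n$ (hence $z = \Theta(y)$ for some $y \in \mathcal F_2$ by the inductive hypothesis), together with elements $\lambda_{(1,0)}SS^*$ and $SS^*\lambda_{(-j,\epsilon)}$ from the base case. Since $SzS^* = \overline{\chi}(z)SS^* = \Theta(\chi(y) SS^*) \in \Theta(\mathcal F_2)$ and $SS^* \in \mathcal F_2$, the factorization lies in $\Theta(\mathcal F_2)$, completing the induction.

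Once $\Theta : \mathcal F_2 \to \mathcal H$ is established as a $^*$-isomorphism intertwining the two restrictions of $\varphi$, the Rokhlin property for $\varphi|_{\mathcal H}$ follows by Lemma~\ref{lem:Roklin_iso}, since $\varphi|_{\mathcal F_2}$ has the Rokhlin property (as noted in the proof of Theorem~\ref{thm:maximality_of_cuntz}, via identification of $\mathcal F_2$ with $\bigotimes_{n=1}^\infty M_2(\mathbb C)$ and the shift endomorphism, which has the Rokhlin property by \cite[Definition~2.1]{BH14}).
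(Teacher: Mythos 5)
Your proposal is correct and follows essentially the same route as the paper's own proof: the same Choi--Latr\'emoli\`ere isomorphism $\Theta$ with $\Theta(S)=S$, $\Theta(\lambda_{(1,0)}S)=\lambda_{(0,1)}S$, the same use of $\chi$, $B$, and $\overline{\chi}=\Theta\circ\chi\circ\Theta^{-1}$ for the inclusion $\Theta(\mathcal F_2)\subset\mathcal H$, the same induction on $n$ (with the identical base-case identities and parity-case factorization via $SzS^*=\overline{\chi}(z)SS^*$) for the reverse inclusion, and the same appeal to Lemma~\ref{lem:Roklin_iso} to transfer the Rokhlin property. No substantive differences to report.
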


\begin{proof}

One can see that the map \(\Theta : \C \to \C \rtimes_{\sigma} \mathbb{Z}_2\), defined by
\begin{equation*}
\Theta(S) = S, \quad \Theta(\lambda_{(1,0)} S) = \lambda_{(0,1)} S
\end{equation*}
is a \(^*\)-isomorphism (see \cite[Theorem 2.1]{CL09}).  
Let \(\chi : \C \to \C\) be the \(^*\)-endomorphism defined by
\begin{equation*}
\chi(x) = S x S^* + \lambda_{(1,0)} S x S^* \lambda_{(-1,0)},
\end{equation*}
and let \(B \coloneqq \overline{\operatorname{span}}\{\lambda_{(m,0)} S S^* \lambda_{(-k,0)} : 0 \le m,k \le 1\}\).  
Then 
\[
\mathcal{F}_2 = C^*\big(\{\chi^n(B) : 0\le n \in \mathbb{Z}\}\big)
\]
(see \cite[Page 77]{RS02}). 

Let \(\overline{\chi}: \C \rtimes_{\sigma} \mathbb{Z}_2 \to \C \rtimes_{\sigma} \mathbb{Z}_2\) be the \(^*\)-endomorphism defined by \(\overline{\chi} \coloneqq \Theta \circ \chi \circ \Theta^{-1}\).  
A simple computation shows that \(\overline{\chi}(\mathcal{H}) \subset \mathcal{H}\),  
\begin{equation*}\label{eqn:induction_car_sym}
\Theta(B) = \overline{\operatorname{span}} \{S S^*,\ \lambda_{(1,0)} S S^* \lambda_{(-1,0)},\ S S^* \lambda_{(0,1)},\ \lambda_{(1,0)} S S^* \lambda_{(-1,1)}\} \subset \mathcal{H},
\end{equation*}
and
\[
\Theta(\chi^n(B)) = \overline{\chi}^n(\Theta(B)).
\]
Thus, \(\Theta(\mathcal{F}_2) \subset \mathcal{H}\).

Recall that \(\mathcal{H}\) is the closed linear span of elements of the form \(\lambda_{(m,0)} S^n S^{*n} \lambda_{(-k,\epsilon)}\), where \(0 \le m, k \le 2^n - 1\), \(\epsilon \in \mathbb{Z}_2\), and \(n\in \mathbb{N}\). We prove 
the reverse inclusion 
\(\mathcal{H}\subset \Theta(\mathcal{F}_2)\) by induction on \(n\). 

When $n=1$, 
notice that 
\begin{equation}
\label{eqn:elements_B}
\begin{split}
\lambda_{(1,0)}SS^* &= \Theta(\lambda_{(3,0)}S^2 S^{*2} + \lambda_{(1,0)}S^2 S^{*2}\lambda_{(-2,0)}), \\
\lambda_{(1,0)}SS^*\lambda_{(0,1)} &= \Theta(\lambda_{(3,0)}S^2 S^{*2}\lambda_{(-1,0)} + \lambda_{(1,0)}S^2 S^{*2}\lambda_{(-3,0)}), \\
SS^{*}\lambda_{(-1,0)} &= \Theta(S^2 S^{*2}\lambda_{(-3,0)} + \lambda_{(2,0)}S^2 S^{*2}\lambda_{(-1,0)}), \\
SS^{*}\lambda_{(-1,1)} &= \Theta(S^2 S^{*2}\lambda_{(-2,0)} + \lambda_{(2,0)}S^2 S^{*2}).
\end{split}    
\end{equation}
Thus \eqref{eqn:induction_car_sym} together with the identities in \eqref{eqn:elements_B} shows that 
\(\lambda_{(m,0)} S S^{*} \lambda_{(-k,\epsilon)} \in \Theta(\mathcal{F}_2)\) 
for all \(0 \le m, k \le 1\), \(\epsilon \in \mathbb{Z}_2\). 

Now, fix \(n\in \mathbb{N}\), and suppose \(\lambda_{(m,0)} S^n S^{*n} \lambda_{(-k,\epsilon)} \in \Theta(\mathcal{F}_2)\) for all \(0 \le m, k \le 2^n -1\), \(\epsilon \in \mathbb{Z}_2\). Consider \(\lambda_{(m_1 ,0)}S^{n+1}S^{*(n+1)}\lambda_{(-m_2, \epsilon_1)}\in \mathcal{H}\) for some \(0\le m_1, m_2 \le 2^{n+1}-1\) and \(\epsilon_1 \in \mathbb{Z}_2\). 
Then
\begin{align*}
&\lambda_{(m_1 ,0)}S^{n+1}S^{*(n+1)}\lambda_{(-m_2, \epsilon_1)}\\
=&
\begin{cases}
\left( \lambda_{(1,0)} SS^* \right)
S \left( \lambda_{\left( \frac{m_1 - 1}{2}, 0 \right)} S^n S^{*n} \lambda_{\left( -\frac{m_2 - 1}{2}, 0 \right)} \right) S^*
\left( SS^* \lambda_{(-1, \epsilon_1)} \right),\text{ if \(m\), \(n\) odd},\\
\left( \lambda_{(1,0)} SS^* \right)
S \left( \lambda_{\left( \frac{m_1 - 1}{2}, 0 \right)} S^n S^{*n} \lambda_{\left( -\frac{m_2}{2}, 0 \right)} \right) S^*
\left( SS^* \lambda_{(0, \epsilon_1)} \right),\text{ if \(m\) odd, \(n\) even},\\
S \left( \lambda_{\left( \frac{m_1}{2}, 0 \right)} S^n S^{*n} \lambda_{\left( -\frac{m_2 - 1}{2}, 0 \right)} \right) S^*
\left( SS^* \lambda_{(-1, \epsilon_1)} \right),\text{ if \(m\) even, \(n\) odd},\\
S \left( \lambda_{\left( \frac{m_1}{2}, 0 \right)} S^n S^{*n} \lambda_{\left( -\frac{m_2}{2}, 0 \right)} \right) S^*
\left( SS^* \lambda_{(0, \epsilon_1)} \right),
\ \text{ if \(m\), \(n\) even}.
\end{cases}
\end{align*}

Let 
\[
z \coloneqq \lambda_{\left( \frac{m_1 - \mu_1}{2}, 0 \right)} S^n S^{*n} \lambda_{\left( -\frac{m_2 - \mu_2}{2}, 0 \right)}
\]
in any of the cases above with \(\mu_1, \mu_2 \in \mathbb{Z}_2\). Then \( z = \Theta(y) \) for some \( y \in \mathcal{F}_2 \) by the induction hypothesis. Note that 
\begin{equation}\label{eqn:final_induction}
S z S^* = \overline{\chi}(z) SS^* = \Theta(\chi(y) SS^*) \in \Theta(\mathcal{F}_2)
\end{equation}
since \( \chi(y), SS^* \in \mathcal{F}_2 \).

Observe that each term in the right hand side of $\lambda_{(m_1 ,0)}S^{n+1}S^{*(n+1)}\lambda_{(-m_2, \epsilon_1)}$ in the cases above belongs to \( \Theta(\mathcal{F}_2) \) by \eqref{eqn:elements_B} and \eqref{eqn:final_induction}. So 
\[
\lambda_{(m_1, 0)} S^{n+1} S^{*(n+1)} \lambda_{(-m_2, \epsilon_1)} \in \Theta(\mathcal{F}_2).
\]
Therefore, \(\mathcal{H}\subset \Theta(\mathcal{F}_2)\) is proved by induction. 
Consequently, $\mathcal{H} = \Theta(\mathcal{F}_2)$, and $\Theta: \mathcal{F}_2 \to \mathcal{H}$ is a $^*$-isomorphism. 

By the definition of $\Theta$, we have $\Theta \circ \varphi|_{\mathcal{F}_2} = \varphi|_{\mathcal{H}} \circ \Theta$. From the details of the proof of Theorem~\ref{thm:maximality_of_cuntz}, we see that $\varphi|_{\mathcal{F}_2} : \mathcal{F}_2 \to \mathcal{F}_2$ satisfies the Rokhlin property. 
So does $\varphi|_{\mathcal{H}}$ by Lemma~\ref{lem:Roklin_iso}.
\end{proof}

\begin{theo}\label{thm:bunce_sym_roklin}
Let \(\overline{\varphi}\) be the \(^*\)-automorphism of \(\overset{\longrightarrow}{\mathcal{F}}\) induced by the \(^*\)-endomor\-phism \(\varphi: \mathcal{F}\to \mathcal{F}\). Then \(\overline{\varphi}\) has pointwise finite relative Rokhlin dimension with respect to the inclusion \(\overset{\longrightarrow}{\mathcal{H}} \subset \overset{\longrightarrow}{\mathcal{F}}\).
\end{theo}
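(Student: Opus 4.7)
The plan is to follow the blueprint of Theorem \ref{thm:maximality_of_cuntz}. First, Proposition \ref{prop:rokhlin_property_car_sym} shows that the \(^*\)-endomorphism \(\varphi|_{\mathcal{H}}\) has the Rokhlin property, so by \cite[Proposition 2.2]{BH14} the induced automorphism \(\overline{\varphi}|_{\overset{\longrightarrow}{\mathcal{H}}}\) inherits it, and hence has Rokhlin dimension at most \(1\) by Remark \ref{rmk:rokhlin_relative}(ii). In view of Remark \ref{rmk:rokhlin_relative}(iii), to conclude pointwise finite relative Rokhlin dimension for the inclusion \(\overset{\longrightarrow}{\mathcal{H}} \subset \overset{\longrightarrow}{\mathcal{F}}\), it remains only to show that this inclusion induces a well-defined \(^*\)-homomorphism \(\widetilde{i}: F(\overset{\longrightarrow}{\mathcal{H}}) \to F(\overset{\longrightarrow}{\mathcal{F}})\).

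Since \(\varphi\) is a corner endomorphism of the unital \(C^*\)-algebras \(\mathcal{H}\) and \(\mathcal{F}\), the same inductive-limit argument used in the Cuntz case produces compatible \(^*\)-isomorphisms \(\Psi: \overset{\longrightarrow}{\mathcal{F}} \to \K \otimes \mathcal{F}\) that restrict to \(\Psi|_{\overset{\longrightarrow}{\mathcal{H}}}: \overset{\longrightarrow}{\mathcal{H}} \to \K \otimes \mathcal{H}\). Choosing the full projection \(p = e_{11} \otimes 1\) then yields \(p(\K \otimes \mathcal{F})p \cong \mathcal{F}\) and \(p(\K \otimes \mathcal{H})p \cong \mathcal{H}\), so by Lemma \ref{lem:sequence_algebra_stablization}(ii) it suffices to establish the well-definedness of \(\widetilde{i}: F(\mathcal{H}) \to F(\mathcal{F})\).

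The main obstacle lies precisely in this last reduction, since Example \ref{ex:sequence_algebra_bunce}, which is the tool available for assertions of this type, requires a tensor decomposition exhibiting \(\mathcal{H}\) as a left tensor factor of \(\mathcal{F}\). My plan is to invoke the identifications \(\mathcal{H} \cong \M \rtimes_{\overline{\sigma}} \mathbb{Z}_2\) and \(\mathcal{F} \cong \B \rtimes_{\overline{\sigma}} \mathbb{Z}_2\) from Theorem \ref{thm:bunce_sym}, together with a \(\overline{\sigma}\)-equivariant absorption \(\B \cong \M \otimes \B\) arranged so that \(\overline{\sigma}\) acts on the left factor alone; this yields \(\mathcal{F} \cong (\M \rtimes_{\overline{\sigma}} \mathbb{Z}_2) \otimes \B \cong \mathcal{H} \otimes \B\), with the inclusion \(\mathcal{H} \subset \mathcal{F}\) realized as \(\mathcal{H} \otimes 1_{\B}\), exactly the form covered by Example \ref{ex:sequence_algebra_bunce}. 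Constructing such a compatible absorption is the technical crux of the argument.

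Once well-definedness of \(\widetilde{i}: F(\mathcal{H}) \to F(\mathcal{F})\) is in place, Lemma \ref{lem:sequence_algebra_stablization}(ii) lifts it to \(F(\K \otimes \mathcal{H}) \to F(\K \otimes \mathcal{F})\), transferring through \(\Psi\) yields the required \(\widetilde{i}: F(\overset{\longrightarrow}{\mathcal{H}}) \to F(\overset{\longrightarrow}{\mathcal{F}})\), and Remark \ref{rmk:rokhlin_relative}(iii) concludes.
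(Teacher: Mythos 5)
Your first two paragraphs are sound and consistent with the paper's setup, but the step you yourself flag as the technical crux cannot be carried out, and this is a genuine gap. You propose an isomorphism \(\B \cong \M \otimes \B\) (equivariant for \(\overline{\sigma}\)) carrying the canonical copy of \(\M\) onto \(\M \otimes 1_{\B}\), so that \(\mathcal{H} \subset \mathcal{F}\) becomes \(\mathcal{H} \otimes 1 \subset \mathcal{H} \otimes \B\). No such isomorphism exists, even before imposing equivariance, because the inclusion \(\M \subset \B\) is irreducible: identifying \(\B\) with \(C(\z)\rtimes_{\alpha}\mathbb{Z}\) as in Lemma \ref{lem:partial_car}, the freeness of \(\alpha\) makes \(C(\z)\) maximal abelian in \(\B\), so \(\M' \cap \B \subset C(\z)' \cap \B = C(\z) \subset \M\), whence \(\M' \cap \B \subset Z(\M) = \mathbb{C}1\); by contrast \((\M \otimes 1)' \cap (\M \otimes \B) \supset 1 \otimes \B\). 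The same relative-commutant obstruction rules out exhibiting \(\mathcal{H}\) as a unital tensor factor of \(\mathcal{F}\) (one checks \(\mathcal{H}' \cap \mathcal{F}\) is again trivial, using the outerness of \(\overline{\sigma}\) established in the proof of Theorem \ref{thm:max_bunce_sym}). So the route through Example \ref{ex:sequence_algebra_bunce} and Remark \ref{rmk:rokhlin_relative}(iii) is blocked for this inclusion, and the absorption you hope to construct is not merely difficult but impossible.

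The paper avoids the central-sequence machinery entirely for this theorem. After obtaining finite Rokhlin dimension of \(\overline{\varphi}|_{\overset{\longrightarrow}{\mathcal{H}}}\) exactly as you do, it verifies the conditions of Definition \ref{defn:Rokhlin_dim_inclusion} directly: every element of the dense subspace \(\mathcal{F}_0\) is a finite sum of terms \(a\, e_{(n,m)}\lambda_{(k,\epsilon)}\) with \(e_{(n,m)} = \lambda_{(m,0)}S^nS^{*n} \in \mathcal{H}\), so given a finite set \(\Omega \subset \overset{\longrightarrow}{\mathcal{F}}\) one collects the finitely many \(\mathcal{H}\)-parts into a finite set \(\Lambda \subset \overset{\longrightarrow}{\mathcal{H}}\), chooses Rokhlin towers in \(\overset{\longrightarrow}{\mathcal{H}}\) for \(\Lambda\) with tolerance \(\varepsilon/(2r)\), and transfers the estimates to \(\Omega\) using that multiplication by the unitaries \(\lambda_{(k,\epsilon)}\) is isometric. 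You would need to replace your third paragraph with an argument of this kind (or some genuinely different verification that \(\widetilde{i}\) is well-defined); as written, the proof does not go through.
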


\begin{proof}
Since the restriction \(\varphi|_{\mathcal{H}}\) has the Rokhlin property by Proposition \ref{prop:rokhlin_property_car_sym}, it follows from \cite[Proposition 2.2]{BH14} that the restriction \(\overline{\varphi}|_{\overset{\longrightarrow}{\mathcal{H}}}\)
of $\overline{\varphi}$ also has the Rokhlin property. In particular, \(\overline{\varphi}|_{\overset{\longrightarrow}{\mathcal{H}}}\) has finite Rokhlin dimension. Let \(\mathcal{F}_0 = \operatorname{span}\left\{ \lambda_{(m, 0)} S^n S^{*n} \lambda_{(k, \epsilon)} : 0 \le m \le 2^n - 1,\; (k, \epsilon) \in G,\; n \in \mathbb{N} \right\}\). Note that \eqref{eqn:bunce_sym_reformulation} implies \(\mathcal{F}_0\) is dense in \(\mathcal{F}\).

Recall that \(\varphi^{(n)} : \mathcal{F} \to \overset{\longrightarrow}{\mathcal{F}}\) is the canonical inductive limit \(^*\)-homomorphism satisfying \(\varphi^{(n+1)} \circ \varphi = \varphi^{(n)}\), and \(\overset{\longrightarrow}{\mathcal{F}} = \overline{\bigcup_n \varphi^{(n)}(\mathcal{F}_0)}\).
Let \(\Omega \subset \overset{\longrightarrow}{\mathcal{F}}\) be a given finite set, 
\(\varepsilon > 0\), and \(0<N \in \mathbb{Z}\). We may assume without loss of generality that \(\Omega \subset \bigcup_n \varphi^{(n)}(\mathcal{F}_0)\). Fix \(p > N\), and choose a finite set \(\Omega_p \subset \mathcal{F}_0\) such that \(\varphi^{(p)}(\Omega_p) = \Omega\). Let \(e_{(n,m)} \coloneqq \lambda_{(m, 0)} S^n S^{*n}\), where \(0 \le m \le 2^n - 1\) and \(n \in \mathbb{N}\). Then \(e_{(n,m)} \in \mathcal{H}\), and any element of the form \(\lambda_{(m, 0)} S^n S^{*n} \lambda_{(k, \epsilon)} \in \mathcal{F}_0\) can be written as \(e_{(n,m)} \lambda_{(k, \epsilon)}\).

Let \(\Omega_p = \{y_s : 1 \le s \le L\}\) where $L=|\Omega_p|(<\infty)$. Then each \(y_s\) can be written as
\begin{equation*}
    y_s = \sum_{i=1}^{N(s)} a_i^s \, e_{(n_i^s, m_i^s)} \, \lambda_{(k_i^s, \epsilon_i^s)},
\end{equation*}
where \(a_i^s \in \mathbb{C}\), and \(e_{(n_i^s, m_i^s)} \in \mathcal{H}\). Then 
\[
\Lambda_p \coloneqq \{ a_i^s e_{(n_i^s, m_i^s)} : 1 \le i \le N(s), \; 1 \le s \le L \}
\quad \text{and} \quad
\Lambda \coloneqq \varphi^{(p)}(\Lambda_p)
\]
are finite subsets of \(\mathcal{H}\) and \(\overset{\longrightarrow}{\mathcal{H}}\), respectively. 

Let \( r \coloneqq \max\{ N(s) : 1 \le s \le L \} \). For a finite set \(\Lambda\), a positive integer \(N\), and \(\varepsilon_1 \coloneqq \frac{\varepsilon}{2r}\), 
by the finite Rokhlin dimension of the \(^*\)-automorphism \(\overline{\varphi} : \overset{\longrightarrow}{\mathcal{H}} \to \overset{\longrightarrow}{\mathcal{H}} \),
there exist positive contractions
\[
\{ f_k^{(l)} : 1 \le k \le N, \; 0 \le l \le d \} \subset \overset{\longrightarrow}{\mathcal{H}}
\]
satisfying the conditions of Definition \ref{defn:Rokhlin_dimension_single}.

Next, we show that 
\(
\{ f_k^{(l)} : 1 \le k \le N, \; 0 \le l \le d \} \subset \overset{\longrightarrow}{\mathcal{H}}
\subset \overset{\longrightarrow}{\mathcal{F}}\)
satisfies the conditions of Definition  \ref{defn:Rokhlin_dim_inclusion}
for \(\Omega\), \(\varepsilon\), and \(N\). In fact, 
\[
\begin{aligned}
&\left\| \left( \sum_{l=0}^d \sum_{k=1}^N f_k^{(l)} \right) \varphi^{(p)}(y_s) - \varphi^{(p)}(y_s) \right\| \\
&\quad= \left\| \sum_{i=1}^{N(s)} \left( \left( \sum_{l=0}^d \sum_{k=1}^N f_k^{(l)} \right) \varphi^{(p)}\left(a_i^s \, e_{(n_i^s, m_i^s)} \, \lambda_{(k_i^s, \epsilon_i^s)}\right) - \varphi^{(p)}\left(a_i^s \, e_{(n_i^s, m_i^s)} \, \lambda_{(k_i^s, \epsilon_i^s)}\right) \right) \right\| \\
&\quad\le \sum_{i=1}^{N(s)} \left\| \left( \sum_{l=0}^d \sum_{k=1}^N f_k^{(l)} \right) \varphi^{(p)}\left(a_i^s \, e_{(n_i^s, m_i^s)}\right) - \varphi^{(p)}\left(a_i^s \, e_{(n_i^s, m_i^s)}\right) \right\| \\
&\quad< N(s) \varepsilon_1 < \varepsilon.
\end{aligned}
\]
Using a similar argument, we see that the other conditions of Definition \ref{defn:Rokhlin_dim_inclusion} hold as well. Hence, \(\overline{\varphi}\) has pointwise finite relative Rokhlin dimension with respect to the inclusion \(\overset{\longrightarrow}{\mathcal{H}} \subset \overset{\longrightarrow}{\mathcal{F}}\).
\end{proof}
\begin{theo}\label{thm:maximality_car_sym}
\(\C \rtimes_{\sigma} \mathbb{Z}_2 \) is a maximal \(C^*\)-subalgebra of \(\Q \rtimes_{\sigma} \mathbb{Z}_2\).
\end{theo}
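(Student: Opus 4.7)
The plan is to mirror the strategy used in the proof of Theorem~\ref{thm:maximality_of_cuntz}, now drawing on the structural results of Section~\ref{sec:cuntz_inclusion_sym}. First, I would pass to an inductive-limit model. By Proposition~\ref{prop:cuntz_inclusion_sym}, the inclusion \(\C\rtimes_\sigma\mathbb{Z}_2\subset \Q\rtimes_\sigma\mathbb{Z}_2\) is \(^*\)-isomorphic to the corner inclusion
\(
p\bigl(\overset{\longrightarrow}{\mathcal{H}}\rtimes_{\overline{\varphi}}\mathbb{Z}\bigr)p \;\subset\; p\bigl(\overset{\longrightarrow}{\mathcal{F}}\rtimes_{\overline{\varphi}}\mathbb{Z}\bigr)p,
\)
where \(p=\varphi^{(1)}(1)\) is a full projection in both crossed products. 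Applying Lemma~\ref{lem:inclusion_correspondence}, it then suffices to prove that \(\overset{\longrightarrow}{\mathcal{H}}\rtimes_{\overline{\varphi}}\mathbb{Z}\) is a maximal \(C^*\)-subalgebra of \(\overset{\longrightarrow}{\mathcal{F}}\rtimes_{\overline{\varphi}}\mathbb{Z}\).

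Second, I would invoke the Galois-type machinery of Theorem~\ref{thm:pointwise_finite_inclusion_criteria}. Its hypotheses are already in place: \(\mathbb{Z}\) is amenable (in particular, it enjoys the AP), and Theorem~\ref{thm:bunce_sym_roklin} establishes that \(\overline{\varphi}\) has pointwise finite relative Rokhlin dimension with respect to the inclusion \(\overset{\longrightarrow}{\mathcal{H}}\subset\overset{\longrightarrow}{\mathcal{F}}\). Consequently, every intermediate \(C^*\)-algebra between the two reduced crossed products is itself of the form \(B\rtimes_{\overline{\varphi}}\mathbb{Z}\) for some \(\overline{\varphi}\)-invariant \(C^*\)-subalgebra \(B\) with \(\overset{\longrightarrow}{\mathcal{H}}\subset B\subset\overset{\longrightarrow}{\mathcal{F}}\). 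The problem therefore reduces to showing that no proper such \(B\) exists.

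Third, I would lift the maximality of \(\mathcal{H}\) in \(\mathcal{F}\) from Theorem~\ref{thm:bunce_sym} to the inductive-limit level. Following the template of the proof of Theorem~\ref{thm:maximality_of_cuntz}, the Rokhlin property of \(\varphi|_{\mathcal{H}}\) supplied by Proposition~\ref{prop:rokhlin_property_car_sym} allows one to identify \(\overset{\longrightarrow}{\mathcal{H}}\) with a stabilization \(\mathcal{K}\otimes\mathcal{H}\); combined with a parallel analysis of \(\varphi|_{\mathcal{F}}\) one obtains \(\overset{\longrightarrow}{\mathcal{F}}\cong\mathcal{K}\otimes\mathcal{F}\), and then a full rank-one projection together with Lemma~\ref{lem:inclusion_correspondence} transports the desired maximality back to that of \(\mathcal{H}\) in \(\mathcal{F}\).

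The hard part, I expect, will be exactly this last step: moving from the level-wise maximality \(\mathcal{H}\subset\mathcal{F}\) to the inductive-limit maximality \(\overset{\longrightarrow}{\mathcal{H}}\subset\overset{\longrightarrow}{\mathcal{F}}\), since Proposition~\ref{prop:rokhlin_property_car_sym} delivers Rokhlin information only on \(\mathcal{H}\), not directly on \(\mathcal{F}\). Should the stabilization route stumble, a direct argument is available: for an \(\overline{\varphi}\)-invariant \(B\) with \(\overset{\longrightarrow}{\mathcal{H}}\subset B\subset\overset{\longrightarrow}{\mathcal{F}}\), the cut-down \(C_n:=(\varphi^{(n)})^{-1}\bigl(B\cap\varphi^{(n)}(\mathcal{F})\bigr)\) is an intermediate subalgebra of \(\mathcal{H}\subset\mathcal{F}\) and hence equals \(\mathcal{H}\) or \(\mathcal{F}\) by Theorem~\ref{thm:bunce_sym}; the identity \(\overline{\varphi}^{-1}\circ\varphi^{(n)}=\varphi^{(n+1)}\) together with \(\overline{\varphi}\)-invariance of \(B\) forces \(C_n\) to be independent of \(n\), and a density argument on \(\bigcup_n\varphi^{(n)}(\mathcal{F})\) then yields \(B\in\{\overset{\longrightarrow}{\mathcal{H}},\overset{\longrightarrow}{\mathcal{F}}\}\), completing the proof.
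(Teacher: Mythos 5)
Your proposal is correct and follows essentially the same route as the paper's proof: reduce to the corner inclusion via Proposition~\ref{prop:cuntz_inclusion_sym} and Lemma~\ref{lem:inclusion_correspondence}, lift the maximality of \(\mathcal{H}\subset\mathcal{F}\) (Theorem~\ref{thm:bunce_sym}) to \(\overset{\longrightarrow}{\mathcal{H}}\subset\overset{\longrightarrow}{\mathcal{F}}\) using Lemma~\ref{lem:inclusion_correspondence} again, and conclude with Theorem~\ref{thm:bunce_sym_roklin} and Theorem~\ref{thm:pointwise_finite_inclusion_criteria}. The only cosmetic difference is that the identification of the inductive limits with stabilizations comes from the corner-endomorphism structure (as in the proof of Theorem~\ref{thm:maximality_of_cuntz}) rather than from the Rokhlin property of \(\varphi|_{\mathcal{H}}\), but your fallback direct argument is unnecessary.
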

\begin{proof}
By Proposition \ref{prop:cuntz_inclusion_sym} and Lemma \ref{lem:inclusion_correspondence}, it suffices to show the maximality of the inclusion
\[
\overset{\longrightarrow}{\mathcal{H}} \rtimes_{\overline{\varphi}} \mathbb{Z} \subset \overset{\longrightarrow}{\mathcal{F}} \rtimes_{\overline{\varphi}} \mathbb{Z}.
\]
Using Lemma \ref{lem:inclusion_correspondence} again, together with Theorem~\ref{thm:bunce_sym}, we obtain that \(\overset{\longrightarrow}{\mathcal{H}}\) is a maximal \(C^*\)-subalgebra of \(\overset{\longrightarrow}{\mathcal{F}}\). Since \(\overline{\varphi}\) has pointwise finite relative Rokhlin dimension by Theorem \ref{thm:bunce_sym_roklin}, it follows from Theorem \ref{thm:pointwise_finite_inclusion_criteria} that
\(
\overset{\longrightarrow}{\mathcal{H}} \rtimes_{\overline{\varphi}} \mathbb{Z} 
\)
is a maximal \(C^*\)-subalgebra of \(\overset{\longrightarrow}{\mathcal{F}} \rtimes_{\overline{\varphi}} \mathbb{Z} \).
\end{proof}

\begin{cor}\label{cor:maximality_cuntz_fixed}
Using the notation of Theorem \ref{thm:fixed_sym}, 
the \(C^*\)-algebra \(C^*(T, V)\) is a maximal \(C^*\)-subalgebra of \(C^*(T, V, R)\). Equivalently, \(\mathcal{O}_2^\sigma\) is a maximal \(C^*\)-subalgebra of \(\mathcal{Q}_2^\sigma\).
\end{cor}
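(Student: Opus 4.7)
The plan is to realize the inclusion $\mathcal{O}_2^\sigma \subset \mathcal{Q}_2^\sigma$ as a corner of the crossed product inclusion $\mathcal{O}_2 \rtimes_\sigma \mathbb{Z}_2 \subset \mathcal{Q}_2 \rtimes_\sigma \mathbb{Z}_2$ and then invoke Lemma \ref{lem:inclusion_correspondence} in combination with Theorem \ref{thm:maximality_car_sym}. The distinguished projection will be $p := \tfrac{1}{2}(1+W) \in \mathcal{O}_2 \rtimes_\sigma \mathbb{Z}_2 \subset \mathcal{Q}_2 \rtimes_\sigma \mathbb{Z}_2$, which is the natural Rokhlin-type projection associated with the $\mathbb{Z}_2$-action.

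The first step is to verify that $p$ is full in $\mathcal{O}_2 \rtimes_\sigma \mathbb{Z}_2$. Since the flip-flop $\sigma$ is outer on the simple $C^*$-algebra $\mathcal{O}_2$, Kishimoto's theorem implies that $\mathcal{O}_2 \rtimes_\sigma \mathbb{Z}_2$ is simple, so every nonzero projection is automatically full; in particular, so is $p$.

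Next, I would establish the corner identifications $p(\mathcal{O}_2 \rtimes_\sigma \mathbb{Z}_2)p \cong \mathcal{O}_2^\sigma$ and $p(\mathcal{Q}_2 \rtimes_\sigma \mathbb{Z}_2)p \cong \mathcal{Q}_2^\sigma$ via the map $\Psi : a \mapsto ap$. Using the identity $Wp = p$ (which follows from $W = W^*$ and $W^2 = 1$), a direct calculation shows that for all $a, b \in \mathcal{Q}_2$,
\[
p(a + bW)p \;=\; \tfrac{1}{2}\bigl(a + \sigma(a) + b + \sigma(b)\bigr)\,p,
\]
so every element of the corner has the form $cp$ for some $c \in \mathcal{Q}_2^\sigma$. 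This yields surjectivity of $\Psi$; well-definedness follows from the commutation $aW = Wa$ for $a \in \mathcal{Q}_2^\sigma$, and injectivity from the injectivity of the canonical embedding $\mathcal{Q}_2 \hookrightarrow \mathcal{Q}_2 \rtimes_\sigma \mathbb{Z}_2$. Exactly the same argument, applied with $a, b \in \mathcal{O}_2$, shows that $\Psi$ restricts to a $^*$-isomorphism $\mathcal{O}_2^\sigma \cong p(\mathcal{O}_2 \rtimes_\sigma \mathbb{Z}_2)p$.

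Finally, I would apply Lemma \ref{lem:inclusion_correspondence} to the inclusion $\mathcal{O}_2 \rtimes_\sigma \mathbb{Z}_2 \subset \mathcal{Q}_2 \rtimes_\sigma \mathbb{Z}_2$ with the full projection $p$ to obtain a bijection $D \mapsto pDp$ between intermediate $C^*$-subalgebras of this crossed product inclusion and those of $\mathcal{O}_2^\sigma \subset \mathcal{Q}_2^\sigma$. Since Theorem \ref{thm:maximality_car_sym} says there are no proper intermediate subalgebras on the crossed product side, the same must hold on the fixed-point side, yielding the corollary. I do not anticipate a serious obstacle: the only step that requires a little care is making sure $\Psi$ restricts properly to the subalgebra inclusion, but this is immediate from the explicit formula above once $a, b$ are restricted to $\mathcal{O}_2$.
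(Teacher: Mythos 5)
Your proposal is correct and follows essentially the same route as the paper: the paper also passes to the corner determined by the projection $\tfrac{1}{2}(1+\lambda_{(0,1)})=\tfrac{1}{2}(1+W)$ in $\mathcal{O}_2\rtimes_\sigma\mathbb{Z}_2$, identifies the corners with the fixed-point algebras (citing Rieffel where you compute the isomorphism directly), and then applies Lemma \ref{lem:inclusion_correspondence} together with Theorem \ref{thm:maximality_car_sym}. The only cosmetic difference is that you justify fullness via simplicity of the crossed product and verify the corner identification by hand, while the paper invokes \cite[Proposition 3.4]{Rieffel80}; you should also add the one-line appeal to Theorem \ref{thm:fixed_sym} to get the $C^*(T,V)\subset C^*(T,V,R)$ formulation.
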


\begin{proof}
By \cite[Proposition 3.4]{Rieffel80}, there exists a projection \(e \coloneqq \frac{1}{2}(1+\lambda_{(0,1)}) \in \C \rtimes_{\sigma} \mathbb{Z}_2\) such that the $^*$-isomorphism 
\(e(\Q \rtimes_{\sigma} \mathbb{Z}_2)e \cong \Q^\sigma\) restricts to a $^*$-isomorphism 
\(e(\C \rtimes_{\sigma} \mathbb{Z}_2)e \cong \C^\sigma\).
It is straightforward to see that \(e\) is a full projection in \(\C \rtimes_{\sigma} \mathbb{Z}_2\). By Lemma \ref{lem:inclusion_correspondence} and Theorem~\ref{thm:maximality_car_sym}, \(e(\C \rtimes_{\sigma} \mathbb{Z}_2)e\) is a maximal \(C^*\)-subalgebra of \(e(\Q \rtimes_{\sigma} \mathbb{Z}_2)e\). Hence, \(\mathcal{O}_2^\sigma\) is a maximal \(C^*\)\nobreakdash-subalgebra of \(\mathcal{Q}_2^\sigma\). The remaining conclusion follows from Theorem~\ref{thm:fixed_sym}.
\end{proof}

Let us end the paper by the following remark. 

\begin{rmk}
    For any natural number $n \geq 2$, one can define the $n$-adic ring \(C^*\)-algebra $\mathcal{Q}_n$ by replacing the index 2 with $n$ in the definition of $\mathcal{Q}_2$. The algebra $\mathcal{Q}_n$ contains a canonical copy of the Cuntz algebra $\mathcal{O}_n \coloneqq C^*(s_1, s_2, \ldots, s_n)$, where the generating isometries is defined as

$$
s_k \coloneqq u^{k-1} s_n, \quad k = 1, \ldots, n.
$$

The constructions, results, and proofs from Section \ref{sec:cuntz_inclusion} are expected to carry over with suitable modifications to this more general setting.

Moreover, there exist at least two natural generalized flip-flop \(^*\)-automorphisms on $\mathcal{O}_n$ \cite{AR22}. While not all such \(^*\)-automorphisms may extend to $\mathcal{Q}_n$, it is plausible that some appropriately extendable generalized flip-flops allow for a partial extension of results from Section \ref{sec:cuntz_inclusion_sym}. We leave a detailed study of such extensions and their consequences to future work.
\end{rmk}


\bibliographystyle{elsarticle-harv} 
 \bibliography{mybib}

\begin{thebibliography}{41}
\expandafter\ifx\csname natexlab\endcsname\relax\def\natexlab#1{#1}\fi
\providecommand{\url}[1]{\texttt{#1}}
\providecommand{\href}[2]{#2}
\providecommand{\path}[1]{#1}
\providecommand{\DOIprefix}{doi:}
\providecommand{\ArXivprefix}{arXiv:}
\providecommand{\URLprefix}{URL: }
\providecommand{\Pubmedprefix}{pmid:}
\providecommand{\doi}[1]{\href{http://dx.doi.org/#1}{\path{#1}}}
\providecommand{\Pubmed}[1]{\href{pmid:#1}{\path{#1}}}
\providecommand{\bibinfo}[2]{#2}
\ifx\xfnm\relax \def\xfnm[#1]{\unskip,\space#1}\fi
\bibitem[{Aiello et~al.(2018a)Aiello, Conti and Rossi}]{ACR18b}
\bibinfo{author}{Aiello, V.}, \bibinfo{author}{Conti, R.}, \bibinfo{author}{Rossi, S.}, \bibinfo{year}{2018}a.
\newblock \bibinfo{title}{Diagonal automorphisms of the 2-adic ring {$C^*$}-algebra}.
\newblock \bibinfo{journal}{Q. J. Math.} \bibinfo{volume}{69}, \bibinfo{pages}{815--833}.
\newblock \URLprefix \url{https://doi.org/10.1093/qmath/hax064}, \DOIprefix\doi{10.1093/qmath/hax064}.
\bibitem[{Aiello et~al.(2018b)Aiello, Conti and Rossi}]{ACR18}
\bibinfo{author}{Aiello, V.}, \bibinfo{author}{Conti, R.}, \bibinfo{author}{Rossi, S.}, \bibinfo{year}{2018}b.
\newblock \bibinfo{title}{A look at the inner structure of the 2-adic ring {$C^*$}-algebra and its automorphism groups}.
\newblock \bibinfo{journal}{Publ. Res. Inst. Math. Sci.} \bibinfo{volume}{54}, \bibinfo{pages}{45--87}.
\newblock \URLprefix \url{https://doi.org/10.4171/PRIMS/54-1-2}, \DOIprefix\doi{10.4171/PRIMS/54-1-2}.
\bibitem[{Aiello et~al.(2020)Aiello, Conti and Rossi}]{ACR20}
\bibinfo{author}{Aiello, V.}, \bibinfo{author}{Conti, R.}, \bibinfo{author}{Rossi, S.}, \bibinfo{year}{2020}.
\newblock \bibinfo{title}{Normalizers and permutative endomorphisms of the 2-adic ring {$C^*$}-algebra}.
\newblock \bibinfo{journal}{J. Math. Anal. Appl.} \bibinfo{volume}{481}, \bibinfo{pages}{123395, 25}.
\newblock \URLprefix \url{https://doi.org/10.1016/j.jmaa.2019.123395}, \DOIprefix\doi{10.1016/j.jmaa.2019.123395}.
\bibitem[{Aiello et~al.(2021)Aiello, Conti and Rossi}]{ACR21}
\bibinfo{author}{Aiello, V.}, \bibinfo{author}{Conti, R.}, \bibinfo{author}{Rossi, S.}, \bibinfo{year}{2021}.
\newblock \bibinfo{title}{A hitchhiker's guide to endomorphisms and automorphisms of {C}untz algebras}.
\newblock \bibinfo{journal}{Rend. Mat. Appl. (7)} \bibinfo{volume}{42}, \bibinfo{pages}{61--162}.
\bibitem[{Aiello and Rossi(2022)}]{AR22}
\bibinfo{author}{Aiello, V.}, \bibinfo{author}{Rossi, S.}, \bibinfo{year}{2022}.
\newblock \bibinfo{title}{On the cyclic automorphism of the {C}untz algebra and its fixed-point algebra}.
\newblock \bibinfo{journal}{J. Math. Anal. Appl.} \bibinfo{volume}{505}, \bibinfo{pages}{Paper No. 125476, 10}.
\newblock \URLprefix \url{https://doi.org/10.1016/j.jmaa.2021.125476}, \DOIprefix\doi{10.1016/j.jmaa.2021.125476}.
\bibitem[{Amrutam et~al.(2023)Amrutam, Hirshberg and Seth}]{AHS23}
\bibinfo{author}{Amrutam, T.}, \bibinfo{author}{Hirshberg, I.}, \bibinfo{author}{Seth, A.}, \bibinfo{year}{2023}.
\newblock \bibinfo{title}{Intermediate crossed product {$C^*$}-algebras} \URLprefix \url{https://arxiv.org/abs/2311.01524}, \href{http://arxiv.org/abs/2311.01524}{{\tt arXiv:2311.01524}}.
\bibitem[{Barlak et~al.(2015)Barlak, Enders, Matui, Szab\'{o} and Winter}]{BEMSW15}
\bibinfo{author}{Barlak, S.}, \bibinfo{author}{Enders, D.}, \bibinfo{author}{Matui, H.}, \bibinfo{author}{Szab\'{o}, G.}, \bibinfo{author}{Winter, W.}, \bibinfo{year}{2015}.
\newblock \bibinfo{title}{The {R}okhlin property vs. {R}okhlin dimension 1 on unital {K}irchberg algebras}.
\newblock \bibinfo{journal}{J. Noncommut. Geom.} \bibinfo{volume}{9}, \bibinfo{pages}{1383--1393}.
\newblock \URLprefix \url{https://doi.org/10.4171/JNCG/226}, \DOIprefix\doi{10.4171/JNCG/226}.
\bibitem[{Barlak et~al.(2018)Barlak, Omland and Stammeier}]{BOS18}
\bibinfo{author}{Barlak, S.}, \bibinfo{author}{Omland, T.}, \bibinfo{author}{Stammeier, N.}, \bibinfo{year}{2018}.
\newblock \bibinfo{title}{On the {$K$}-theory of {$C^{\ast}$}-algebras arising from integral dynamics}.
\newblock \bibinfo{journal}{Ergodic Theory Dynam. Systems} \bibinfo{volume}{38}, \bibinfo{pages}{832--862}.
\newblock \URLprefix \url{https://doi.org/10.1017/etds.2016.63}, \DOIprefix\doi{10.1017/etds.2016.63}.
\bibitem[{Bassi and Conti(2025)}]{BC2025}
\bibinfo{author}{Bassi, J.}, \bibinfo{author}{Conti, R.}, \bibinfo{year}{2025}.
\newblock \bibinfo{title}{On the inclusion $\mathcal{O}_2 \subset \mathcal{Q}_2$}.
\newblock \URLprefix \url{https://arxiv.org/abs/2505.16759}, \href{http://arxiv.org/abs/2505.16759}{{\tt arXiv:2505.16759}}.
\bibitem[{Blackadar(1990)}]{Blackadar1990}
\bibinfo{author}{Blackadar, B.}, \bibinfo{year}{1990}.
\newblock \bibinfo{title}{Symmetries of the {CAR} algebra}.
\newblock \bibinfo{journal}{Ann. of Math. (2)} \bibinfo{volume}{131}, \bibinfo{pages}{589--623}.
\newblock \URLprefix \url{https://doi.org/10.2307/1971472}, \DOIprefix\doi{10.2307/1971472}.
\bibitem[{Blackadar(1998)}]{Bla98}
\bibinfo{author}{Blackadar, B.}, \bibinfo{year}{1998}.
\newblock \bibinfo{title}{{$K$}-theory for operator algebras}. volume~\bibinfo{volume}{5} of \textit{\bibinfo{series}{Mathematical Sciences Research Institute Publications}}.
\newblock \bibinfo{edition}{Second} ed., \bibinfo{publisher}{Cambridge University Press, Cambridge}.
\bibitem[{Brown and Hirshberg(2014)}]{BH14}
\bibinfo{author}{Brown, J.}, \bibinfo{author}{Hirshberg, I.}, \bibinfo{year}{2014}.
\newblock \bibinfo{title}{The {R}okhlin property for endomorphisms and strongly self-absorbing {$C^*$}-algebras}.
\newblock \bibinfo{journal}{Illinois J. Math.} \bibinfo{volume}{58}, \bibinfo{pages}{619--627}.
\newblock \URLprefix \url{http://projecteuclid.org/euclid.ijm/1441790380}.
\bibitem[{Brown et~al.(2021)Brown, Exel, Fuller, Pitts and Reznikoff}]{BEFRPR21}
\bibinfo{author}{Brown, J.H.}, \bibinfo{author}{Exel, R.}, \bibinfo{author}{Fuller, A.H.}, \bibinfo{author}{Pitts, D.R.}, \bibinfo{author}{Reznikoff, S.A.}, \bibinfo{year}{2021}.
\newblock \bibinfo{title}{Intermediate {$C^*$}-algebras of {C}artan embeddings}.
\newblock \bibinfo{journal}{Proc. Amer. Math. Soc. Ser. B} \bibinfo{volume}{8}, \bibinfo{pages}{27--41}.
\newblock \URLprefix \url{https://doi.org/10.1090/bproc/66}, \DOIprefix\doi{10.1090/bproc/66}.
\bibitem[{Brown and Ozawa(2008)}]{brown08}
\bibinfo{author}{Brown, N.P.}, \bibinfo{author}{Ozawa, N.}, \bibinfo{year}{2008}.
\newblock \bibinfo{title}{{$C^*$}-algebras and finite-dimensional approximations}. volume~\bibinfo{volume}{88} of \textit{\bibinfo{series}{Graduate Studies in Mathematics}}.
\newblock \bibinfo{publisher}{American Mathematical Society, Providence, RI}.
\newblock \URLprefix \url{https://doi.org/10.1090/gsm/088}, \DOIprefix\doi{10.1090/gsm/088}.
\bibitem[{Choi and Latr\'{e}moli\`{e}re(2009)}]{CL09}
\bibinfo{author}{Choi, M.D.}, \bibinfo{author}{Latr\'{e}moli\`{e}re, F.}, \bibinfo{year}{2009}.
\newblock \bibinfo{title}{The {$C^*$}-algebra of symmetric words in two universal unitaries}.
\newblock \bibinfo{journal}{J. Operator Theory} \bibinfo{volume}{62}, \bibinfo{pages}{159--169}.
\bibitem[{Choi and Latr\'emoli\`ere(2012)}]{CL12}
\bibinfo{author}{Choi, M.D.}, \bibinfo{author}{Latr\'emoli\`ere, F.}, \bibinfo{year}{2012}.
\newblock \bibinfo{title}{Symmetry in the {C}untz algebra on two generators}.
\newblock \bibinfo{journal}{J. Math. Anal. Appl.} \bibinfo{volume}{387}, \bibinfo{pages}{1050--1060}.
\newblock \URLprefix \url{https://doi.org/10.1016/j.jmaa.2011.10.008}, \DOIprefix\doi{10.1016/j.jmaa.2011.10.008}.
\bibitem[{Echterhoff and R\o{r}dam(2024)}]{ER24}
\bibinfo{author}{Echterhoff, S.}, \bibinfo{author}{R\o{r}dam, M.}, \bibinfo{year}{2024}.
\newblock \bibinfo{title}{Inclusions of {$C^*$}-algebras arising from fixed-point algebras}.
\newblock \bibinfo{journal}{Groups Geom. Dyn.} \bibinfo{volume}{18}, \bibinfo{pages}{127--145}.
\newblock \URLprefix \url{https://doi.org/10.4171/ggd/743}, \DOIprefix\doi{10.4171/ggd/743}.
\bibitem[{Exel(2017)}]{Exel2017}
\bibinfo{author}{Exel, R.}, \bibinfo{year}{2017}.
\newblock \bibinfo{title}{Partial dynamical systems, {F}ell bundles and applications}. volume \bibinfo{volume}{224} of \textit{\bibinfo{series}{Mathematical Surveys and Monographs}}.
\newblock \bibinfo{publisher}{American Mathematical Society, Providence, RI}.
\newblock \URLprefix \url{https://doi.org/10.1090/surv/224}, \DOIprefix\doi{10.1090/surv/224}.
\bibitem[{Exel and Starling(2016)}]{ES16}
\bibinfo{author}{Exel, R.}, \bibinfo{author}{Starling, C.}, \bibinfo{year}{2016}.
\newblock \bibinfo{title}{Self-similar graph {$C^*$}-algebras and partial crossed products}.
\newblock \bibinfo{journal}{J. Operator Theory} \bibinfo{volume}{75}, \bibinfo{pages}{299--317}.
\newblock \URLprefix \url{https://doi.org/10.7900/jot.2015mar04.2072}, \DOIprefix\doi{10.7900/jot.2015mar04.2072}.
\bibitem[{Ge and Kadison(1996)}]{Ge-Kadison96}
\bibinfo{author}{Ge, L.}, \bibinfo{author}{Kadison, R.}, \bibinfo{year}{1996}.
\newblock \bibinfo{title}{On tensor products for von {N}eumann algebras}.
\newblock \bibinfo{journal}{Invent. Math.} \bibinfo{volume}{123}, \bibinfo{pages}{453--466}.
\newblock \URLprefix \url{https://doi.org/10.1007/s002220050036}, \DOIprefix\doi{10.1007/s002220050036}.
\bibitem[{Hirshberg(2002)}]{Hirshberg2002}
\bibinfo{author}{Hirshberg, I.}, \bibinfo{year}{2002}.
\newblock \bibinfo{title}{On {$C^*$}-algebras associated to certain endomorphisms of discrete groups}.
\newblock \bibinfo{journal}{New York J. Math.} \bibinfo{volume}{8}, \bibinfo{pages}{99--109}.
\newblock \URLprefix \url{http://nyjm.albany.edu:8000/j/2002/8_99.html}.
\bibitem[{Hirshberg(2022)}]{Hirshberg22}
\bibinfo{author}{Hirshberg, I.}, \bibinfo{year}{2022}.
\newblock \bibinfo{title}{Rokhlin-type properties, approximate innerness and {$Z$}-stability}.
\newblock \bibinfo{journal}{J. Operator Theory} \bibinfo{volume}{87}, \bibinfo{pages}{157--186}.
\newblock \URLprefix \url{https://doi.org/10.7900/jot}, \DOIprefix\doi{10.7900/jot}.
\bibitem[{Hirshberg and Phillips(2015)}]{HP15}
\bibinfo{author}{Hirshberg, I.}, \bibinfo{author}{Phillips, N.C.}, \bibinfo{year}{2015}.
\newblock \bibinfo{title}{Rokhlin dimension: obstructions and permanence properties}.
\newblock \bibinfo{journal}{Doc. Math.} \bibinfo{volume}{20}, \bibinfo{pages}{199--236}.
\bibitem[{Hirshberg et~al.(2015)Hirshberg, Winter and Zacharias}]{HWZ15}
\bibinfo{author}{Hirshberg, I.}, \bibinfo{author}{Winter, W.}, \bibinfo{author}{Zacharias, J.}, \bibinfo{year}{2015}.
\newblock \bibinfo{title}{Rokhlin dimension and {$C^*$}-dynamics}.
\newblock \bibinfo{journal}{Comm. Math. Phys.} \bibinfo{volume}{335}, \bibinfo{pages}{637--670}.
\newblock \URLprefix \url{https://doi.org/10.1007/s00220-014-2264-x}, \DOIprefix\doi{10.1007/s00220-014-2264-x}.
\bibitem[{Izumi(2002)}]{Izumi2002}
\bibinfo{author}{Izumi, M.}, \bibinfo{year}{2002}.
\newblock \bibinfo{title}{Inclusions of simple {$C^\ast$}-algebras}.
\newblock \bibinfo{journal}{J. Reine Angew. Math.} \bibinfo{volume}{547}, \bibinfo{pages}{97--138}.
\newblock \URLprefix \url{https://doi.org/10.1515/crll.2002.055}, \DOIprefix\doi{10.1515/crll.2002.055}.
\bibitem[{Jeong(1995)}]{Jeong95}
\bibinfo{author}{Jeong, J.A.}, \bibinfo{year}{1995}.
\newblock \bibinfo{title}{Purely infinite simple {$C^\ast$}-crossed products}.
\newblock \bibinfo{journal}{Proc. Amer. Math. Soc.} \bibinfo{volume}{123}, \bibinfo{pages}{3075--3078}.
\newblock \URLprefix \url{https://doi.org/10.2307/2160662}, \DOIprefix\doi{10.2307/2160662}.
\bibitem[{Kalantar and Kennedy(2017)}]{KK17}
\bibinfo{author}{Kalantar, M.}, \bibinfo{author}{Kennedy, M.}, \bibinfo{year}{2017}.
\newblock \bibinfo{title}{Boundaries of reduced {$C^*$}-algebras of discrete groups}.
\newblock \bibinfo{journal}{J. Reine Angew. Math.} \bibinfo{volume}{727}, \bibinfo{pages}{247--267}.
\newblock \URLprefix \url{https://doi.org/10.1515/crelle-2014-0111}, \DOIprefix\doi{10.1515/crelle-2014-0111}.
\bibitem[{Kennedy and Ursu(2024)}]{KD24}
\bibinfo{author}{Kennedy, M.}, \bibinfo{author}{Ursu, D.}, \bibinfo{year}{2024}.
\newblock \bibinfo{title}{Intermediate subalgebras for reduced crossed products of discrete groups} \URLprefix \url{https://arxiv.org/abs/2406.01546}, \href{http://arxiv.org/abs/2406.01546}{{\tt arXiv:2406.01546}}.
\bibitem[{Kirchberg(2006)}]{Kirch04}
\bibinfo{author}{Kirchberg, E.}, \bibinfo{year}{2006}.
\newblock \bibinfo{title}{Central sequences in {$C^*$}-algebras and strongly purely infinite algebras}, in: \bibinfo{booktitle}{Operator {A}lgebras: {T}he {A}bel {S}ymposium 2004}. \bibinfo{publisher}{Springer, Berlin}. volume~\bibinfo{volume}{1} of \textit{\bibinfo{series}{Abel Symp.}}, pp. \bibinfo{pages}{175--231}.
\newblock \URLprefix \url{https://doi.org/10.1007/978-3-540-34197-0_10}, \DOIprefix\doi{10.1007/978-3-540-34197-0\_10}.
\bibitem[{Kishimoto(1981)}]{Kishimoto81}
\bibinfo{author}{Kishimoto, A.}, \bibinfo{year}{1981}.
\newblock \bibinfo{title}{Outer automorphisms and reduced crossed products of simple {$C\sp{\ast} $}-algebras}.
\newblock \bibinfo{journal}{Comm. Math. Phys.} \bibinfo{volume}{81}, \bibinfo{pages}{429--435}.
\newblock \URLprefix \url{http://projecteuclid.org/euclid.cmp/1103920327}.
\bibitem[{Larsen and Li(2012)}]{LL12}
\bibinfo{author}{Larsen, N.S.}, \bibinfo{author}{Li, X.}, \bibinfo{year}{2012}.
\newblock \bibinfo{title}{The 2-adic ring {$C^\ast$}-algebra of the integers and its representations}.
\newblock \bibinfo{journal}{J. Funct. Anal.} \bibinfo{volume}{262}, \bibinfo{pages}{1392--1426}.
\newblock \URLprefix \url{https://doi.org/10.1016/j.jfa.2011.11.008}, \DOIprefix\doi{10.1016/j.jfa.2011.11.008}.
\bibitem[{Li and Yang(2021)}]{LY21}
\bibinfo{author}{Li, H.}, \bibinfo{author}{Yang, D.}, \bibinfo{year}{2021}.
\newblock \bibinfo{title}{Self-similar {$k$}-graph {$C^*$}-algebras}.
\newblock \bibinfo{journal}{Int. Math. Res. Not. IMRN} , \bibinfo{pages}{11270--11305}\URLprefix \url{https://doi.org/10.1093/imrn/rnz146}, \DOIprefix\doi{10.1093/imrn/rnz146}.
\bibitem[{Popa(2006)}]{Popa2006}
\bibinfo{author}{Popa, S.}, \bibinfo{year}{2006}.
\newblock \bibinfo{title}{On a class of type {${\rm II}_1$} factors with {B}etti numbers invariants}.
\newblock \bibinfo{journal}{Ann. of Math. (2)} \bibinfo{volume}{163}, \bibinfo{pages}{809--899}.
\newblock \URLprefix \url{https://doi.org/10.4007/annals.2006.163.809}, \DOIprefix\doi{10.4007/annals.2006.163.809}.
\bibitem[{Popa(2007)}]{Popa2007}
\bibinfo{author}{Popa, S.}, \bibinfo{year}{2007}.
\newblock \bibinfo{title}{Deformation and rigidity for group actions and von {N}eumann algebras}, in: \bibinfo{booktitle}{International {C}ongress of {M}athematicians. {V}ol. {I}}. \bibinfo{publisher}{Eur. Math. Soc., Z\"urich}, pp. \bibinfo{pages}{445--477}.
\newblock \URLprefix \url{https://doi.org/10.4171/022-1/18}, \DOIprefix\doi{10.4171/022-1/18}.
\bibitem[{Rieffel(1980)}]{Rieffel80}
\bibinfo{author}{Rieffel, M.A.}, \bibinfo{year}{1980}.
\newblock \bibinfo{title}{Actions of finite groups on {$C\sp{\ast} $}-algebras}.
\newblock \bibinfo{journal}{Math. Scand.} \bibinfo{volume}{47}, \bibinfo{pages}{157--176}.
\newblock \URLprefix \url{https://doi.org/10.7146/math.scand.a-11882}, \DOIprefix\doi{10.7146/math.scand.a-11882}.
\bibitem[{R{\o}rdam and St{\o}rmer(2002)}]{RS02}
\bibinfo{author}{R{\o}rdam, M.}, \bibinfo{author}{St{\o}rmer, E.}, \bibinfo{year}{2002}.
\newblock \bibinfo{title}{Classification of nuclear {$C^*$}-algebras. {E}ntropy in operator algebras}. volume \bibinfo{volume}{126} of \textit{\bibinfo{series}{Encyclopaedia of Mathematical Sciences}}.
\newblock \bibinfo{publisher}{Springer-Verlag, Berlin}.
\newblock \URLprefix \url{https://doi.org/10.1007/978-3-662-04825-2}, \DOIprefix\doi{10.1007/978-3-662-04825-2}. \bibinfo{note}{operator Algebras and Non-commutative Geometry, 7}.
\bibitem[{Scarparo(2017)}]{SE17}
\bibinfo{author}{Scarparo, E.P.}, \bibinfo{year}{2017}.
\newblock \bibinfo{title}{Supramenable groups and partial actions}.
\newblock \bibinfo{journal}{Ergodic Theory Dynam. Systems} \bibinfo{volume}{37}, \bibinfo{pages}{1592--1606}.
\newblock \URLprefix \url{https://doi.org/10.1017/etds.2015.117}, \DOIprefix\doi{10.1017/etds.2015.117}.
\bibitem[{Stacey(1993)}]{Stacey1993}
\bibinfo{author}{Stacey, P.J.}, \bibinfo{year}{1993}.
\newblock \bibinfo{title}{Crossed products of {$C^\ast$}-algebras by {$^\ast$}-endomorphisms}.
\newblock \bibinfo{journal}{J. Austral. Math. Soc. Ser. A} \bibinfo{volume}{54}, \bibinfo{pages}{204--212}.
\bibitem[{Suzuki(2020)}]{Suzuki2020}
\bibinfo{author}{Suzuki, Y.}, \bibinfo{year}{2020}.
\newblock \bibinfo{title}{Complete descriptions of intermediate operator algebras by intermediate extensions of dynamical systems}.
\newblock \bibinfo{journal}{Comm. Math. Phys.} \bibinfo{volume}{375}, \bibinfo{pages}{1273--1297}.
\newblock \URLprefix \url{https://doi.org/10.1007/s00220-019-03436-1}, \DOIprefix\doi{10.1007/s00220-019-03436-1}.
\bibitem[{Valente and Yang(2025)}]{VY25}
\bibinfo{author}{Valente, R.}, \bibinfo{author}{Yang, D.}, \bibinfo{year}{2025}.
\newblock \bibinfo{title}{Semigroups of self-similar actions and higher rank baumslag-solitar semigroups}.
\newblock \bibinfo{journal}{Proc. R. Soc. Edinb. A} \DOIprefix\doi{10.1017/prm.2025.10053}.
\bibitem[{Zacharias(2001)}]{Zacharias2001}
\bibinfo{author}{Zacharias, J.}, \bibinfo{year}{2001}.
\newblock \bibinfo{title}{Splitting for subalgebras of tensor products}.
\newblock \bibinfo{journal}{Proc. Amer. Math. Soc.} \bibinfo{volume}{129}, \bibinfo{pages}{407--413}.
\newblock \URLprefix \url{https://doi.org/10.1090/S0002-9939-00-05629-X}, \DOIprefix\doi{10.1090/S0002-9939-00-05629-X}.

\end{thebibliography}






\end{document}